\newcommand{\MA}{\mathfrak{A}}
\newcommand{\BI}{\mathbb{I}}
\newcommand{\CX}{\mathcal{X}}
\newcommand{\BX}{\mathbb{X}}
\newcommand{\ZZ}{\mathbb{Z}}
\newcommand{\CW}{\mathcal{W}}
\newcommand{\Div}{\mbox{div}}
\newcommand{\DeltaA}{\Delta}
\newcommand{\Mr}{{{ \mathfrak R }}}
\newcommand{\Ltwo}{L^2}
\newcounter{lil11}
\newcounter{lil22}
\newenvironment{steps2}
{\em \begin{list} {Step (\roman{lil22}):} {\usecounter{lil22} \em
\setlength{\leftmargin}{0.3cm}
\setlength{\topsep}{0.5cm} \setlength{\itemsep}{0.0cm}
\setlength{\parsep}{0.1cm} \setlength{\itemindent}{0.4cm}
\setlength{\parskip}{0.0cm}}} {\end{list}}
\newcommand{\baray}{\begin{array}{rcl}}
\newcommand{\earay}{\end{array}}
\newcommand{\barray}{\begin{array}{rcl}}
\newcommand{\earray}{\end{array}}
\newcommand\dela[1]{}
\newcommand{\bcase}{\begin{cases}}
\newcommand{\ecase}{\end{cases}}
\newcommand\del[1]{}
\newcommand\del[1]{}
\def\eps{\varepsilon}
\newcommand{\lk}{\left}
\newcommand{\lqq}{\lefteqn}
\newcommand{\rk}{\right}
\newcommand{\la}{{\langle}}
\newcommand{\ra}{{\rangle}}
\newcommand{\LL}{{\rm I \kern -0.2em L}}
\newcommand{\ep} {\varepsilon }
\newcommand{\be} {\begin{enumerate} }
\newcommand{\ee} {\end{enumerate} }
\newcommand{\CO}{{{ \mathcal O }}}
\newcommand{\CT}{{{ \mathcal T }}}
\newcommand{\CH}{{{ \mathcal H }}}
\newcommand{\CS}{{{ \mathcal S }}}
\newcommand{\CM}{{{ \mathcal M }}}
\newcommand{\BF}{{{ \mathbb{F} }}}
\newcommand{\CF}{{{ \mathcal F }}}
\newcommand{\CL}{{{ \mathcal L }}}
\newcommand{\RR}{{\mathbb{R}}}
\newcommand{\NN}{\mathbb{N}} 
\newcommand{\SubX}{\mathcal{X}_\MA(R_1,R_2)}
\newcommand{\di}{\mbox{div}}
\newcommand{\PP}{{\mathbb{P}}}
\newcommand{\EE}{ \mathbb{E} }
\newcommand{\DEQS}{\begin{eqnarray*}}
\newcommand{\EEQS}{\end{eqnarray*}}
\newcommand{\DEQSZ}{\begin{eqnarray}}
\newcommand{\EEQSZ}{\end{eqnarray}}
\newcommand{\DEQ}{\begin{eqnarray}}
\newcommand{\EEQ}{\end{eqnarray}}
\newcounter{lil1q}
\newenvironment{steps}
{\begin{list} { \bf Step (\Roman{lil1q})}
{ \usecounter{lil1q}
\setlength{\leftmargin}{0.0cm}
\setlength{\topsep}{0.2cm}
\setlength{\itemsep}{0.0cm}
\setlength{\parsep}{0.1cm}
\setlength{\itemindent}{0.8cm}
\setlength{\parskip}{0.0cm}}}
{\end{list}}
\theoremstyle{plain}
\newtheorem{theorem}{Theorem}[section]
\newtheorem{notation}{Notation}[section]
\newtheorem{claim}{Claim}[section]
\newtheorem{corollary}[theorem]{Corollary}
\newtheorem{hypo}[theorem]{Assumption}
\newtheorem{example}[theorem]{Example}
\newtheorem{definition}[theorem]{Definition}
\newtheorem{remark}[theorem]{Remark}
\newtheorem{proposition}[theorem]{Proposition}
\newtheorem{tproposition}[theorem]{Technical Proposition}
\numberwithin{equation}{section}
\numberwithin{equation}{section} \allowdisplaybreaks
\begin{document}

\title[The 2d stochastic Keller--Segel model in porous media]{
Martingale solution to a stochastic chemotaxis system with porous medium diffusion}

\author{Erika Hausenblas}
   \address{%
   Department of Mathematics,
	Montanuniversitaet Leoben,
	Austria.}
\email{erika.hausenblas@unileoben.ac.at}

\author[Debopriya Mukherjee]{Debopriya Mukherjee}

\address{%
   Department of Mathematics,
	Montanuniversitaet Leoben,
	Austria.}
\email{debopriya.mukherjee@unileoben.ac.at}

\author[Ali Zakaria]{Ali Zakaria}
\address{%
Department of Mathematical Sciences, University of South Africa, Florida, 0003}
\email{zakaria@aims.ac.za}

\date{\today}
\thanks{The first author of the paper is supported by Austrian Science Foundation, project number P 32295. The second author is supported by Marie Sk{\l}odowska-Curie Individual Fellowships H2020-MSCA-IF-2020, 888255.
}

\begin{abstract}
In this paper, we study the classical Keller–Segel system on a two-dimensional domain perturbed by a pair of Wiener processes, where the leading diffusion term is replaced by a porous media term.
In particular, we investigate the coupled system
\begin{align*}
\lk\{
\barray \dot {u} & =& r_u\Delta   |u|^{\gamma-1}u- \chi \Div( u\nabla v) +\sigma_u\, u \circ dW_1,
\\
\dot{v}& =&r_v \Delta v  -\alpha v+ \beta u +\sigma_v\, v \circ dW_2,
\earray \rk.
\end{align*}
for $\gamma>1$, with initial condition $(u_0,v_0)$ on a filtered probability space  $\mathfrak{A}$ and $(W_1,W_2)$ be a pair of time homogeneous spatial Wiener processes over $\mathfrak{A}$. Here $u$ is the cell density and $v$ is the concentration of the chemical signal, $\sigma_u$ and $\sigma_v$ are positive constants. The positive  terms $r_u$ and $r_v$  are the diffusivity of the  cells and chemoattractant, respectively.
The positive value  
 $\chi$ is the chemotactic sensitivity,
  $\alpha\ge0$ is the so-called damping constant and $\beta\ge0$ is the production weight corresponding to $u$. 
  \par
Since the randomness is intrinsic, interpretation of the stochastic integral in Stratonovich sense is natural. We construct a solution (integral) operator, and establish its continuity and compactness properties in an appropriately chosen Banach space. In this manner, we formulate a stochastic version of the Schauder--Tychonoff Type Fixed Point Theorem which is specific to our problem to obtain a solution. In kind, we achieve the existence of a martingale solution. 
%
\end{abstract}

\maketitle

\textbf{Keywords and phrases:} {Chemotaxis, porous media equation, nonlinear diffusion, the  Keller-Segel model, Stochastic Partial Differential Equations, Stochastic Analysis, Mathematical Biology}

\textbf{AMS subject classification (2002):} {Primary 60H15, 92C17,  35A01;
Secondary 35B65, 35K87, 35K51, 35Q92.}


\section{Introduction}
The importance of model organisms with the goal of the uniformity of the mathematical structure and its related biological factors is fundamental. The celebrated Keller--Segel model introduced by Keller and Segel \cite{KSS}, and Patlak \cite{patlak} illustrate the aggregation of Dictyostelium discoideum in view of pattern formation. For adequate references on the phenomenological analysis of the aforesaid class of models we refer to Horstmann \cite{horstmann1,horstmann2}, Hillen and Painter \cite{hillen1},  Bellomo {\em et al.} \cite{bellomo1}, the works of Biler \cite{biler2}, and  of Perthame \cite{perthame}.

Chemotaxis can be defined as the movement or orientation of a population (bacteria, cell or other single or multicellular organisms) induced by a chemical concentration gradient either towards or away from the chemical signals. Chemotaxis is a fundamental and universal phenomenon  which includes (but not limited to)
\begin{itemize}
\item fertilization and reproduction; see \cite{Eisenbach,R.Lord_ferti} etc.
\item bacterial motility; see \cite{R.Dillon_bacteria,Chun_1988} etc.
\item development of axons in nervous system; see Chapter 8 in \cite{Eisenbach} and references therein
\item molecular mechanisms in amobae; see \cite{Michael_amobae} etc.
\end{itemize}

In the classical Keller–Segel model for chemotaxis, the chemoattractant
is emitted by the cells that react according to biased random walk inducing linear
diffusion operators. In view of \cite{Szymanska_2009}, migration of the cells in porous media and the cell motility is a nonlinear function of the cell density. Hence, there is a genuine need to study the chemotaxis systems
with porous medium diffusion. It is well-known that concentration gradients in
porous media display a complex topology and are highly variable in terms of magnitude and direction, leading to the occurrence of non-linear effects on bacteria motility.

%

\del{
However, flow fields in
porous media display a complex topology and are highly variable in terms of magnitude and direction, leading to the occurrence of non-linear effects on bacteria motility.
This complexity limited so far our understanding and modeling capability of chemotaxis in porous media. In order to properly model transport of microorganisms and
their response to the attractants, numerical schemes should accurately reproduce low
concentrations and concentration gradients. In particular, the correct computation of
concentration gradients which drive chemotaxis represents a challenge for numerical
schemes.
A number of numerical models for chemotaxis in porous media

These well
established numerical schemes suffer of artificial numerical diffusion, leading to significant errors in the reproduction of solute gradients, thereby hampering their ability
to correctly reproduce the movement of chemotactic bacteria. These features are very appealing for the study
of chemotaxis in porous media.

However, extending the standard SPH to chemotaxis
is not straightforward because bacteria are affected by two velocity fields, advection
and chemotaxis velocity, while the chemo-attractant is affected only by advection.
Furthermore, SPH does not reproduce correctly concentration gradients, particularly
when particles, representing elements of the carrier fluid, are non-uniformly distributed, as it occurs in heterogeneous velocity fields (Boso et al. 2013; Avesani et al.
2014, 2015). Furthermore, an interpolation scheme should be introduced to compute
the concentration of the chemoattractant at bacteria positions, which is an additional
source of numerical error.
Motivated by these considerations, the primary goal of this paper is to extend a new
class of
}

Finally, Porter {\em et al.} \cite{Porter2011} developed a multiscale model of chemotaxis in porous
media where transport of bacteria is expressed in terms of effective medium parameters.
The simplest form of the model is
%
\DEQSZ\label{sys1}
\qquad\lk\{ \barray
d{u}(t,x)&=& \Big( r_u \Delta u^{[\gamma]} (t,x) 
-\chi \mbox{div} \big( u(t,x)\nabla v(t,x)\big)\Big)\, dt, \quad  t \geq 0,\,\,x \in \CO,
\\
d{v}(t,x) &=& \big(r_v \Delta v(t,x)+\beta u(t,x) 
-\alpha v(t,x)\Big)\, dt,  \quad t \geq 0,\,\,x \in \CO, \\
(u(0,x),v(0,x))&=&(u_0(x),v_0(x)), \quad x \in \CO,
\phantom{\big|}
\earray\rk.
\EEQSZ
for $\gamma>1$ and $\CO$ is a bounded domain in $\RR^d$. Here,  $\xi^{[\gamma]}$ is an abbreviation for $\xi |\xi|^{\gamma-1}$ for $\xi \in \RR$, and  $\Delta$ denotes the Laplacian with Dirichlet (or Neumann) boundary condition.
Furthermore, $u$ denotes the cell density and $v$ is the concentration of the chemical signal. The term $-u \cdot \nabla v$ is the chemotactic flux, which describes the transportation of the amount of the bacteria in the direction of the flow. This gives rise to the evolution of bacterial concentration by diffusion and transport along the flow. This essentially produces $-\chi \mbox{div} \big( u\nabla v\big)$ illustrating
cross diffusive effects into the model, where the positive constant $\chi$ represents the chemotactic sensitivity. Here, $\Delta u^{[\gamma]}$ is the migration of the bacteria, for which, the motility depends on the bacterial density and $\Delta v$ is the diffusion of chemoattractant. The positive  terms $r_u$ and $r_v$  are the diffusivity of the  cells and chemoattractant, respectively. In the signal concentration model,  $\alpha\ge0$ is the so--called damping constant and $\beta\ge0$ is the production weight corresponding to $u$. The leading term is not parabolic at all points, but only
degenerate parabolic, a fact that has mathematical consequences, both qualitative
and quantitative.
For a quick survey, we again refer to 
\cite{ perthame, horstmann1, horstmann2, bellomo1, biler2, hillen1}. For a brief review on the behaviour of classical Keller–Segel model by a degenerate diffusion of porous medium type in the deterministic framework, we refer the articles of Carrillo {\em et al.} \cite{carrillo1,carrillo2,carrillo3} and the references cited therein.

The Keller--Segel model is a  macroscopic model derived from the limiting behaviour of the microscopic model; see, e.g.\ \cite{stevens1}. Here, one relies on fundamental balance laws and Fick's law of diffusion. Consequently, significant aspects of microscopic dynamics such as fluctuations of molecules are disregarded. Hence, in the derivation of the above macroscopic equations, fluctuations around the mean
value are neglected.
Secondly, in natural systems random disturbances and so-called  environmental noise is inevitable and may create together with the nonlinearity a change of the dynamical behaviour.
For a more realistic model, it is necessary to consider essential features of the natural environment which are non-reproducible. Hence, the model should include random spatio-temporal
forcing.

The randomness leads to a variate of new phenomena and may have highly non--trivial impact on the behaviour of the solution.
It should be stressed that  adding a stochastic driving term to a partial differential equation can have highly non--trivial impact implication on the behaviour of the solution.
The presence of the stochastic term (noise) in the models often leads to qualitatively new types of behaviour, which is most helpful in understanding  the real processes and is also often more realistic. 
For example, there exist deterministic systems of PDEs, the Navier-Stokes equations, for example, which have non-ergodic invariant measure. However, adding a noise term leads to the existence and uniqueness of an invariant measure, and, hence, an ergodic invariant measure.
Besides, researcher from applied science investigate in biological systems disturbed by some noise.
In Karig {\em et al.} \cite{karig}, the authors explore whether the stochastic extension leads to a broader range of parameter with Turing patterns by a genetically engineered synthetic bacterial population in which the signalling molecules form a stochastic activator{\textendash}inhibitor system.
Kolinichenko and Ryashko \cite{noise4}, respectively,  Bashkirtseva {\em et al.} \cite{noise5} addresses  multistability and noise-induced transitions between different states.

In this article, we perturb the density of cells $u$ and concentration of chemoattractant $v$ by
time-homogeneous spatial  Wiener processes.
Due to the non-linearities in the system, one fails to use semigroup approach for the equation perturbing cell density. Thus the standard methods to
show existence and uniqueness of solutions cannot be applied here. Here we formulate a stochastic version of Schauder-Tychonoff type Fixed Point Theorem which is specific to our problem to obtain a solution on $[0,T]$ for $T>0$. In this manner, we achieve only the existence of a martingale solution, but not the uniqueness of the solution.

Solvability and boundedness to the chemotaxis model with porous medium diffusion in determinstic set-up has gained much interest in recent years (see for instance \cite{Senba_2011, Chunhua_2020, Chunhua_2019}, and the references therein). The authors are not aware of any work which treats the stochastic modelling of the coupled Keller--Segel model with porous medium diffusion. Recently, the authors in \cite{EH+DM+TT_2020,EH+DM+JL_2020} have treated the simple stochastic Keller--Segel model in one and two dimensions. In these works, the authors rely on the entities controlled by a Lyapunov functional. We refer the very recent work \cite{tusheng}, where the deterministic Keller--Segel model is coupled with the stochastic Navier–Stokes equations.
The stochastic porous media equation itself is the topic of the recent book of Barbu, Da Prato and R\"ockner, \cite{roeckner}, where we used several results for the stochastic porous media equation.  In the article of Dareiotis, Gerencser, and Gess \cite{gess2020a}
the solvability of the stochastic porous media equation in dimension one with space time white noise is shown.
In the article of Dareiotis, Gess, and Tsatsoulis \cite{gess2019a} the long time behaviour of the stochastic porous media equation is treated.

\begin{notation}
For a Banach space $E$ and $0\le a<b<\infty$, let $C^{\zeta}_b([a,b];E)$ denote a set of all continuous and bounded functions $u:[a,b]\to E$ such that
$$
\| u\|_{C_b^{\zeta}([a,b];E)} :=\sup_{a\le t\le b} |u(t)|_E +\sup_{a\le s,t\le b\atop t\not= s} \frac {|u(t)-u(s)|_E}{|t-s|^\zeta}
$$
is finite. The space $C_b^{\zeta}([a,b];E)$ endowed with the norm $\| \cdot\|_{C_b^{\zeta}([a,b];E)}$ is a Banach space.
\end{notation}

\begin{notation}
Let $1\le p < \infty$ and $s\in\RR$,  then $H^s_p$ denotes the Bessel Potential space or Sobolev space of fractional order defined by
\DEQS
H_p^s(\RR^d ) :=\lk\{ f\in \mathcal{S}'(\RR^d): |f|_{H^s_p} := \big| \big( (1+\xi^2 )^{\frac s2}  \hat{ f} \,
\big)^\vee \big|_{L^p}
<\infty\rk\}.
\EEQS
Here, we denote the Fourier transform of a function $f$ by $\hat f$ and its inverse by $f^\vee$.
Presently, $H_p^s(\mathcal O)$ is the restriction of $H_p^s(\RR^d)$ to $\mathcal{O}$, with
\[
\|f\|_{H_p^s(\mathcal O)} 
=\inf_{g|_{\mathcal{O}}=f,\\
\,\, g \in H_p^s(\RR^d)} \|g\|_{H_p^s(\RR^d)}.
\]
Here, $g|_{\mathcal{O}} \in \mathcal{D}'(\mathcal{O})$ denotes the restriction of $g \in \mathcal{D}'(\RR^d)$ to $\mathcal{O}$ in the sense of the theory of distributions.
\end{notation}

\section{Problem description and main result}
In this section we introduce the definition of martingale solution to the stochastic system and present our main result. Before, we formulate the necessary assumptions on the noise and the initial conditions.

\medskip

Let $\mathfrak{A}=(\Omega, \CF,\BF,\PP)$ be a complete probability space
and filtration $\BF=(\CF_t)_{t\ge 0}$  satisfying the usual conditions i.e.,
\begin{itemize}
\item[(i)]
 $\mathbb{P}$ is complete on $(\Omega, \CF)$,
 \item[(ii)]
for each $t\geq 0$, $\CF_t$ contains all $(\CF,\mathbb{P})$-null sets,
\item[(iii)]
  and the filtration $\BF$ is right-continuous.
\end{itemize}
Let $\CO\subset \RR^2$ be a bounded domain, with smooth boundary (or the rectangle $\CO=[0,1]^2$).
\del{Let us define the Laplacian with the Neumann boundary conditions by
\begin{equation}
\label{eqn:4.3} \left\{
\begin{array}{lcl}
D(A) &:= &\{ u \in H^2(\CO):\frac {\partial u}{\partial n}(x)=0\;\mbox{on} \;\partial \CO \},\cr
A u&:=&\Delta u= \sum_{i=1}^ d {\partial^2_{x_i}}u, \quad u \in D(A),
\end{array}
\right.
\end{equation}}
Let $H_1$ and $H_2$ be two Hilbert spaces,
 and $W_j$, $j=1,2$, be two cylindrical Wiener processes defined on $H_1$ and $H_2$,  respectively.
In this paper, we consider the following system of equations
\DEQSZ\label{sys1noise}
\lk\{ \barray
d{u}(t)&=& \Big( r_u \DeltaA  u^{[\gamma]} (t) 
-\chi \mbox{div} \big( u(t)\nabla v(t)\big)\Big)\, dt+\sigma_u\, u(t)\circ dW_1(t)
\\
d{v}(t) &=& \big(r_v \DeltaA  v(t)+\beta u(t) 
-\alpha v(t)\Big)\, dt +\sigma_v\, v(t)\circ dW_2(t),\,\quad t\in \RR_0^+,\phantom{\big|}
\earray\rk.
\EEQSZ
where $\sigma_u$ and $\sigma_v$ are positive constants
and  the positive terms $r_u$ and $r_v$  are the diffusivity of the cells and chemoattractant, respectively. Also, $\alpha\ge0$ is the so--called damping constant and $\beta\ge0$ is the production weight corresponding to $u$.
Here $A=\Delta$ denotes the Laplace operator with Neuman boundary conditions,  i.e.\ $D(A):= \{ u \in H^2(\CO):\frac {\partial u}{\partial n}(x)=0$ on $\partial \CO \}$,
where $n$ denotes the (typically exterior) normal to the boundary $\partial \CO$.
Since the randomness is intrinsic, interpretation of the stochastic integral in Stratonovich sense is natural. For a detailed explanation of the Stratonovich integral, we refer to the book by Duan and Wang \cite{duan} or to the original work of Stratonovich \cite{stratonovich}.
%
%
To show the existence of the solution, the Wiener perturbation have to satisfy regularity assumptions. Both processes $W_1$ and $W_2$ are cylindrical Wiener processes over real-valued Hilbert spaces $H_1$ and $H_2$, respectively.  Then, due to the spectral representation of Wiener processes, $W_1$ and $W_2$ can be written formally
by the sum (possibly infinite)
\[
 W_1(t,x)=\sum_{k\in\BI_1}
\psi^{(1)} _k(x)\beta^{(1)}_k(t) \quad\text{and}\quad
 W_2(t,x)=\sum_{k\in\BI_2}
 \psi^{(2)} _k(x)\beta^{(2)}_k(t),
\]
where $\{\psi^{(1)}_k:k\in\mathbb{I}_1\}$ and $\{\psi^{(2)}_k:k\in\mathbb{I}_2\}$ are  some orthonormal basis in $H_1$ and $H_2$, $\mathbb{I}_1$ and $\mathbb{I}_2$ are the corresponding index sets,  $\{\beta_k^{(1)}:k\in\mathbb{I}_1\}$ and   $\{\beta_k^{(2)}:k\in\mathbb{I}_2\}$ are two mutually independent standard Brownian motions over $\MA$.
In order to get the existence of a solution, we introduce the following hypothesis.
\begin{hypo}\label{wiener}
Let us assume that  the embeddings $H_1 \hookrightarrow L^\infty(\CO)$ and  $H_2 \hookrightarrow L^3(\CO)$ are $\gamma$--radonifying.
 \end{hypo}
%
%
\begin{hypo}\label{wiener1} Let $H_1$ and $H_2$ be isomorphic to Bessel potential spaces. For example,
$H_1=H^{\delta_1}(\CO)$ for $\delta_1>1$, and
$H_2=H^{\delta_2}(\CO)$ for $\delta_2>1$.   
Let us assume that there exist $\delta_i>1;\,\,i=1,2$ such that  the embeddings $ H^{\delta_i}_2(\CO)$ in $H^1_2$
 is a Hilbert-Schmidt for $i=1,2$.
 \end{hypo}

\begin{remark}
Let
$\{\phi_k:k\in\NN\}$ be the eigenfunctions of $(A,D(A))$ with the corresponding eigenvalues $\{\nu_k:k\in\NN\}$. Then, $\{\phi_{\delta_i,k}:k\in\NN\}$ with $\phi_{\delta_i,k}:=\nu_k ^{-\delta_i}\phi_k$ is an orthonormal basis in $H^{\delta_i}_2(\CO)$ for $i=1,2$.
Then, we
can write for  $W_1$ and $W_2$ as the following sum 
\[
 W_1(t,x)=\sum_{k\in\mathbb{I}}
\psi^{(\delta_1)} _k(x)\beta^{(1)}_k(t) \quad\text{and}\quad
 W_2(t,x)=\sum_{k\in\mathbb{I}}
 \psi^{(\delta_2)} _k(x)\beta^{(2)}_k(t).
\]
%
\end{remark}
\begin{example}
 In the case of a single dimension, a complete orthonormal system of the underlying Lebesgue space~$\Ltwo([0,2\pi])$ is given by sine and cosine functions
\begin{equation}
\theta_m(x)=
\begin{cases}
{\sqrt{2}} \, \sin\big({m} x\big) &\!\!\text{if } m \geq 1\,, \\
{\sqrt{2}} &\!\!\text{if } m = 0\,, \\
{\sqrt{2}}\, \cos\big({m} x\big) &\!\!\text{if } m \leq - 1\,.
\end{cases}
\end{equation}
The extension to $[0,1]^2$ relies on tensor products, i.e., for a multiindex $m= (m_1,m_2)\in \ZZ^2$ we  have
\begin{equation}
\Theta_m(x)=  \theta_{m_1}(x_1) \,\theta_{m_2}(x_2)\,, \, \quad x=(x_1,x_2)\in \CO.
\end{equation}
The corresponding eigenvalues are given by
\begin{equation}
\label{eq:lambda}
\lambda_m = - \, 4{\pi^2} ( m_1^2+m_2^2) \,, \quad m = (m_1,  m_2) \in \ZZ^2\,.
\end{equation}\end{example}

\begin{example}
Let $\CO=\mathcal{D}:=\{x\in\RR^d:|x|\le 1\}$ be the unit disk, then for a multiindex $m=(m_1,m_2)\in\NN\times \mathbb{Z}$, we have as eigenfunction
 $\psi(x_1,x_2)= J'_{m_1} (\kappa_{m_1,m_2}r)\theta_{m_2}(\tilde \theta)$, where $r=\sqrt{x_1^2+x_2^2}$, $\tilde \theta=\tan(x_1/x_2)$,  $J_{m_1}$ is the $m_1^{th}$ Bessel function, $\kappa_{m_1,m_2}$ is the $m_2^{th}$ root of $J'_{m_1}$.
The eigenvalues  $\{\lambda_m:m\in\NN\times \mathbb{Z}\}$ are given by $\lambda_m=\kappa^2_{m_1,m_2}$, $m\in  \NN\times \mathbb{Z}$. Note, the $n^{th}$ maxima of the $j^{th}$ Bessel function is for $n,j\to \infty$ approximately at $n$, hence $\lambda_m\sim m_2^2$.
\end{example}

%

Since the cell density,  $u$, and the concentration of the chemical signal, $v$, of the chemotaxis system have to be non-negative, the initial conditions $u_0$ and $v_0$ have to be non-negative  as well. Besides, we have to impose some regularity assumptions to get the existence of a solution.  
\begin{hypo}\label{init}
Let $u_0 \in H^{-1}_2(\CO)$ and $v_0\in L^ 2 (\CO)$ be two random variables over $\mathfrak{A}$ such that
\begin{enumerate}
  \item $u_0\ge 0$ and $v_0\ge 0$;
  \item $(u_0,v_0)$ is $\CF_0$--measurable;
   \item $\EE|u_0|_{L^{\gamma+1}}^{\gamma+1}$ and  $\EE|\nabla v_0|_{L^{4}}^{4}<\infty$. 
\end{enumerate}
\end{hypo}
%
%
%
As mentioned before, in the proof for the existence of the solution, we are using compactness arguments, which causes the loss of the original probability space. This essentially means the solution will only be a weak solution in the probabilistic sense. Thereupon, we have to construct another probability space and  obtain a martingale solution.
\begin{definition}\label{Def:mart-sol}
A {\sl  martingale solution}   to the problem
\eqref{sys1noise} is a system
\begin{equation}
\left(\Omega ,{{\mathcal{F}}},{\mathbb{F}},\mathbb{P},
(W_1,W_2), (u,v)\right)
\label{mart-system}
\end{equation}
such that
\begin{itemize}
\item  $\mathfrak{A}:=(\Omega ,{{\mathcal{F}}},{\mathbb{F}},\mathbb{P})$ is a complete filtered
probability space with a filtration ${\mathbb{F}}=\{{{\mathcal{F}}}_t:t\in [0,T]\}$ satisfying the usual conditions,
\item $W_1$ and $W_2$ are $H_1$--valued, respectively $H_2$--valued Wiener processes  over the probability space
$\mathfrak{A}$ with covariance $Q_1$ and $Q_2$;
\item $u:[0,T]\times \Omega \to H^{-1}_2(\CO)$  and $v:[0,T]\times \Omega \to L^4(\CO) 
$  are two  ${\mathbb{F}}$-progressively
measurable
processes such that the couple $(u,v)$ is a solution to the system \eqref{sys1noise} over the probability space $\mathfrak{A}$.
 \end{itemize}
\end{definition}

\begin{theorem} \label{main.martingal}
Let Assumption \ref{wiener} be satisfied. Then, for all initial conditions $(u_0,v_0)\in H_2^{-1}(\CO)\times L^4(\CO)$ satisfying Assumption
\ref{init} and for all $T>0$, there exists a martingale solution~$(\mathfrak{A},(
W_1,W_2),(u,v))$ to the system \eqref{sys1noise}
satisfying the following properties:
\begin{enumerate}[label=(\roman*)]
\item $\PP\otimes \mbox{-{Leb}-{a.s.}}$ $u(x,t)\ge 0$ and $v(x,t)\ge 0$;
  \item there exists a
positive constant~$C_1=C_1(T, u_0,v_0)$ such that
\DEQS
\EE \Big\{\sup_{0\le s\le T}|u(s)|_{L^{\gamma+1}}^{\gamma+1} +\frac \gamma2\,(\gamma+1)(\gamma) \int_0^T |u^{\gamma}(s)
\nabla u(s)|_{L^2}^2\, ds \Big\}\le C_1;
\EEQS
  \item there exists a constant $C_2=C_2(T, u_0,v_0)>0$ such  that
\DEQS
 \EE\Big[ \sup_{0\le s\le T}|u(s)|_{H^{-1}_2}^2 +\int_0^T |u(s)|^{\gamma+1}_{L^{\gamma+1}}\, ds\Big]
\le  C_2;
\EEQS
  \item there exist constants $C_3=C_3(T, u_0,v_0)$ and $C_4=C_4(T, u_0,v_0)>0$ such  that
\DEQS
\EE \|v\|_{L^{\gamma+1}(0,T;H^1_{4})}^{4}\le \EE|v_0|_{L^4} + C_3
\; &\mbox{and}& \;
\EE \|v\|_{C([0,T];L^4)}^4\le \EE|v_0|_{L^4} + C_4.
\EEQS

\end{enumerate}
\end{theorem}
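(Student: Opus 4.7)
\medskip
\noindent\textbf{Proof proposal.} The plan is to construct the martingale solution via an approximation scheme combined with the stochastic Schauder--Tychonoff fixed point argument advertised in the introduction. First, I would convert the two Stratonovich equations into It\^o form, absorbing the correction terms $\tfrac{\sigma_u^2}{2}\sum_{k}(\psi^{(1)}_k)^2\,u$ and $\tfrac{\sigma_v^2}{2}\sum_k(\psi^{(2)}_k)^2\,v$ into the drift; Assumption~\ref{wiener} guarantees that these series converge in $L^\infty(\CO)$ and $L^3(\CO)$ respectively. I would then truncate the nonlinearity $u\mapsto u^{[\gamma]}$ (e.g.\ replace it by a bounded, Yosida-type approximation $u\mapsto u^{[\gamma]}\wedge R$) and introduce a Galerkin projection $\Pi_N$ onto the first $N$ eigenfunctions of $(A,D(A))$ in both equations, so that we obtain a finite-dimensional SDE system with Lipschitz coefficients on every ball, yielding an approximating sequence $(u_N,v_N)$.

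To handle the cross-diffusive coupling, I would decouple the system via a map $\Phi:\tilde u \mapsto u$ defined in two steps: given a candidate $\tilde u\in L^{\gamma+1}(\Omega\times[0,T]\times\CO)$, first solve the linear affine SPDE for $v$ driven by the source $\beta\tilde u$ and the multiplicative noise $\sigma_v v\,dW_2$ (standard semigroup/monotone theory applies here, giving $v\in C([0,T];L^4)\cap L^{\gamma+1}(0,T;H^1_4)$); next, plug this $v$ as a datum into the porous-medium equation for $u$ with the additional drift $-\chi\,\mathrm{div}(u\nabla v)$ and solve via the Barbu--Da~Prato--R\"ockner machinery \cite{roeckner} adapted to include the transport perturbation. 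The map $\Phi$ is compact on a suitable bounded set of the Banach space $L^{\gamma+1}(\Omega;C([0,T];H^{-1}_2))\cap L^{\gamma+1}(\Omega\times[0,T]\times\CO)$ thanks to the Aubin--Lions--Simon compactness lemma in the deterministic variable combined with Jakubowski's extension of the Skorokhod theorem in the probabilistic variable; its continuity follows from the strict monotonicity of $\xi\mapsto\xi^{[\gamma]}$.

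The a priori estimates yielding the four conclusions are the core of the argument. For (ii), I would apply the It\^o formula to $\tfrac{1}{\gamma+1}|u_N|_{L^{\gamma+1}}^{\gamma+1}$: the porous medium term produces exactly $\tfrac{\gamma(\gamma+1)}{2}\int_0^T|u^\gamma\nabla u|_{L^2}^2\,ds$, the chemotactic flux is controlled by integration by parts giving $\chi\int(\nabla u^{[\gamma+1]/\gamma})\cdot u\nabla v\,dx\,ds$ which is absorbed into the dissipation up to a term involving $|\nabla v|_{L^\infty}$ or $|\Delta v|_{L^p}$, and the Stratonovich correction cancels the quadratic variation of the martingale part, leaving a clean Gronwall-type bound. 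Estimate (iii) follows by testing with $(-A)^{-1}u$ and exploiting the positivity of $\int u\,u^{[\gamma]}\,dx = |u|_{L^{\gamma+1}}^{\gamma+1}$. For (iv), I would apply It\^o's formula to $|v|_{L^4}^4$ and to $\|v\|_{H^1_4}^4$ using the regularising effect of the heat semigroup; Assumption~\ref{wiener} ($H_2\hookrightarrow L^3$ $\gamma$-radonifying) is exactly what is needed to estimate the stochastic integral in $L^4$. Positivity (i) is obtained by testing the equations with the negative parts $u^-$ and $v^-$ after regularising, using $\xi^{[\gamma]}\xi^-\le 0$ and the fact that $v_0,u_0\ge 0$ propagates through the multiplicative noise.

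The main obstacle I anticipate is the passage to the limit in the nonlinear term $\Delta u_N^{[\gamma]}$ after extracting a subsequence via Skorokhod/Jakubowski: strong convergence of $u_N$ in $L^{\gamma+1}(0,T;L^{\gamma+1})$ is not directly available from the energy estimates, which only give boundedness in $L^{\gamma+1}$ and $u^\gamma\nabla u\in L^2$. I would bridge this gap by combining the fractional Sobolev regularity in time coming from the It\^o expansion of $u_N$ in $H^{-1}_2$ with the spatial estimate $\nabla u_N^{[(\gamma+1)/2]}\in L^2$ (which is equivalent to $|u^\gamma\nabla u|_{L^2}^2$), and applying an Aubin--Lions argument with the triple $L^{\gamma+1}\hookrightarrow H^{-s}\hookrightarrow H^{-1}_2$ for appropriate $s$. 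A further subtlety is identifying the limit of $u_N\nabla v_N$ as $u\nabla v$ in the distributional sense: this requires strong convergence of $v_N$ in $L^2(0,T;H^1)$, obtained from (iv) together with a compactness argument for the $v$-equation. Once the passage to the limit is accomplished, the martingale representation theorem identifies the It\^o integrals, and reverting to Stratonovich form completes the construction of the martingale solution with the stated properties.
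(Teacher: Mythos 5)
Your overall architecture is sound but it is a genuinely different route from the paper's. The paper does not use a Galerkin/truncation scheme followed by Skorokhod--Jakubowski compactness at the level of laws; instead it builds a solution operator $\CT$ on the set $\CX_\MA(R_1,R_2)$ of nonnegative processes on a \emph{fixed} stochastic basis satisfying the two energy bounds, proves that $\CT$ maps this set into itself (Proposition \ref{prop conti}), is H\"older continuous (Proposition \ref{continuity}) and has precompact range (Proposition \ref{CC}), and then invokes a stochastic Schauder--Tychonoff fixed point theorem from the authors' earlier work. Your decoupling map $\Phi$ is the same idea as $\CT$, and your energy functionals $|u|_{L^{\gamma+1}}^{\gamma+1}$, $|u|_{H^{-1}_2}^2$, $|v|_{L^4}^4$ are exactly the ones the paper uses, so the a priori estimates (ii)--(iv) would come out the same way. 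What your route buys is a more standard and self-contained compactness argument; what the paper's route buys is that the inner solvability step reduces to off-the-shelf theorems (Liu--R\"ockner for $u$, van Neerven--Veraar--Weis maximal regularity for $v$).

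Two concrete points need repair. First, in the paper's operator the frozen input $\eta$ replaces $u$ in the chemotactic term as well, i.e.\ the $u$-equation is $du=(r_u\Delta u^{[\gamma]}-\chi\,\mathrm{div}(\eta\nabla v)+\dots)dt+\sigma_u u\,dW_1$ with $\mathrm{div}(\eta\nabla v)$ a \emph{given} forcing in $V^\ast$; this is what makes the hypotheses (H1), (H2$'$), (H3), (H4$'$) of \cite[Theorem 5.1.3]{weiroeckner} verifiable in Claim \ref{existence}. You keep the unknown $u$ inside the transport term and propose to ``adapt the Barbu--Da Prato--R\"ockner machinery''; but the operator $u\mapsto\Delta u^{[\gamma]}-\chi\,\mathrm{div}(u\nabla v)$ is not locally monotone in $H^{-1}_2$ in any obvious way (pairing the difference of the transport terms against $u_1-u_2$ produces $|u_1-u_2|_{L^{\gamma+1}}|u_1-u_2|_{L^{(\gamma+1)/\gamma}}|\nabla v|_{L^\infty}$-type quantities that cannot be absorbed into $-|u_1-u_2|_{L^{\gamma+1}}^{\gamma+1}$ or $|u_1-u_2|_{H^{-1}_2}^2$ without further structure), so this adaptation is precisely the hard step and is being assumed rather than proved. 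Second, in the $L^{\gamma+1}$ estimate you control the chemotactic flux ``up to a term involving $|\nabla v|_{L^\infty}$''; that norm is not available --- estimate (iv) only gives $v\in L^4(0,T;H^1_4)$. The paper instead applies H\"older with exponents $\frac{\gamma+1}{2}$ and $\frac{\gamma+1}{\gamma-1}$ to reduce the flux term to $|\nabla v|_{L^{2(\gamma+1)/(\gamma-1)}}^{2(\gamma+1)/(\gamma-1)}$, which is dominated by the $H^1_4$ bound only when $\gamma$ is large enough (the self-mapping property in Proposition \ref{prop conti} is stated for $\gamma>3$); your proposal misses this restriction. A minor slip: $\int_0^T|u^\gamma\nabla u|_{L^2}^2\,ds$ controls $\nabla u^{\gamma+1}$ in $L^2$, not $\nabla u^{(\gamma+1)/2}$; the spatial compactness you want is the content of Technical Proposition \ref{runst1}, which converts this into $u\in L^{2\gamma}(0,T;H^\theta_{2\gamma})$.
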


In Section \ref{martingale}, we will show the existence of a martingale solution to the system \eqref{sys1noise}. In Section \ref{auxiliar}, we present several auxiliary propositions which are essential for the existence theory. 
\del{
\begin{corollary}\label{main_example}
If the conditions of Theorem \ref{main.thm} and Theorem \ref{thm path uniq} are satisfied,
then there exists a unique strong solution to system  \eqref{sys1noise.ito.new} in $C_b([0,T]; H^{-1}_2(\CO))\times C_b([0,T];  L^{2}(\CO))$.

\end{corollary}
}

\section{Existence of a martingale solution to the system \eqref{sys1noise}}\label{martingale}
As mentioned in  the introduction, we will show first the existence of a martingale solution, which is given below. Before starting with the actual proof, we first recall the drawback of the Stratonovich stochastic integral that this integral is not a martingale and the Burkholder–Davis–Gundy inequality does not hold here.
Hence, it is convenient to work the equation in It\^o form, which is presented below.
\DEQSZ\label{sys1noise.ito}
\lk\{ \barray
d{u}(t)&=& \Big( r_u \DeltaA  u^{[\gamma]} (t) 
-\chi \mbox{div} \big( u(t)\nabla v(t) -\mu_1 u(t) \big)\Big)\, dt+\sigma_u u(t) dW_1(t)
\\
d{v}(t) &=& \big(r_v \DeltaA  v(t)+\beta u(t) 
-(\alpha+\mu_2)  v(t)\Big)\, dt +\sigma_v v(t) dW_2(t),\,\quad t\in \RR_0^+.\phantom{\big|}
\earray\rk.
\EEQSZ
Adding the correction term  $\mu_1$ and $\mu_2$, we see that system \eqref{sys1noise.ito} is equivalent to \eqref{sys1noise}, where the stochastic integral is interpreted as the Stratonovich integral. The advantage of the It\^o stochastic integral is, that the It\^o integral is a local martingale and we can apply the Burkholder--Davis--Gundy inequality. For detailed discussion about the correction term and the conversion between the It\^o and Stratonovich forms, we refer the readers to Section 2.1 of \cite{EH+DM+TT_2020}. For simplicity we renamed $\alpha$; in particular, $\alpha$ corresponds to $\alpha+\mu_2$ and $\mu=\mu_1$. In this way we end up with the following system
\DEQSZ\label{sys1noise.ito.new}
\lk\{ \barray
d{u}(t)&=& \Big( r_u \DeltaA  u^{[\gamma]} (t) 
-\chi \mbox{div} \big( u(t)\nabla v(t) +\mu u(t) \big)\Big)\, dt+\sigma_u u(t) dW_1(t)
\\
d{v}(t) &=& \big(r_v \DeltaA  v(t)+\beta u(t) 
-\alpha  v(t)\Big)\, dt +\sigma_v v(t) dW_2(t),\,\quad t\in \RR_0^+.\phantom{\big|}
\earray\rk.
\EEQSZ
\noindent
Hence, due to the drawbacks of the Stratonovich integral, in the proof, we will show the existence of a solution to system \eqref{sys1noise.ito.new}.
However, by the correction term, system  \eqref{sys1noise.ito.new} is equivalent to system \eqref{sys1noise}.

\medskip

\begin{proof}[Proof of Theorem \ref{main.martingal}]
In this proof, we will show the existence of  a solution (in the sense of Definition \ref{Def:mart-sol}) to the system \eqref{sys1noise.ito.new}. The proof is split into three main steps. First, we construct an integral operator on an appropriate space and show its compactness.
Then, we  formulate a stochastic version of the Schauder--Tychonoff Fixed Point Theorem to obtain a solution. In this manner, we achieve only the existence of a martingale solution. 
\begin{steps}
\item {\bf Definitions of the underlying spaces:}
Let us consider
\begin{align}\label{def XU}
&\mathbb{X}_\mathfrak{A}=\Big\{ \xi:[0,T]\times \Omega\to H^{-1}_2(\CO)\quad  \mbox{be progressively measurable over $\mathfrak{A}$ and}
\notag\\
&\hspace{5.5cm}\EE \sup_{0\le s\le T}|\xi(s)|_{H_2^{-1}}^2
<\infty\Big\},
\end{align}
equipped with the norm
\begin{align*}
\|\xi\|_{\mathbb{X}}=\Big\{\EE
\sup_{0\le s\le T}|\xi(s)|_{H_2^{-1}}^2
\Big\}^\frac 12 .
\end{align*}
\noindent
For $\sigma\ge 1 $ and for a Banach space $E$, let us define  the collection of processes
\begin{align}\label{def mt}
\CM_{\mathfrak{A}}^{\sigma}(0,T;E)
&=\Bigg\{ \xi :[0,T] \times \Omega \rightarrow E \quad
\mbox{ be a progressively  measurable process over $\mathfrak{A}$,}
\notag\\
& \quad\mbox{ such that}\,\,\EE \Big[
\int_0^ T |\xi(s)|^\sigma _{E}\, ds\Big] < \infty
\,\Bigg\}
\end{align}
equipped with the norm 
\DEQS
\|\xi\|_{\CM^{\sigma}(0,T;E)}:=
\lk(\EE \Big[\int_0^ T  |\xi(s)|^\sigma_{E}\, ds \Big]\rk)^\frac 1\sigma .
\EEQS
Finally, for $R_1,R_2 >0$ and fixed $\gamma$, let us define the following subspace
\DEQS
\lqq{
\CX_\MA(R_1,R_2):=\bigg\{ \xi\in\BX_\MA: \quad \xi\ge 0\; \;\PP\mbox{-{Leb}-{a.s.}}\quad
} &&\\
&&\EE \Big(\sup_{0\le s\le T}|\xi(s)|_{L^{\gamma+1}}^{\gamma+1} +\frac {r_u}2\,(\gamma+1)\gamma \int_0^T \xi^{\gamma}(s)
|\nabla \xi(s)|_{L^2}^2\, ds \Big)\le R_1,\phantom{\Bigg|}
\\
&&\quad \EE\Big( \sup_{0\le s\le T}|\xi(s)|_{H^{-1}_2}^2 +\int_0^T |\xi(s)|^{\gamma+1}_{L^{\gamma+1}}\, ds\Big)\le R_2\,\,\bigg\}.
\EEQS

\medskip

\item {\bf Definition of the operator:}

Let us define the operator $\CT$ acting on $ \mathbb{X}_\MA$ as follows.
For $\eta\in \mathbb{X}_\MA$, 
 let
$\CT\eta:=u$, where $(u,v)$
solves the following system
\DEQSZ
d{u}(t)&=& \Big( r_u \DeltaA  u^{[\gamma]} (t) 
-\chi \mbox{div} \big( \eta(t)\nabla v(t)\big)+\mu u(t)\Big)\, dt+\sigma_u u(t)\, dW_1(t),\label{sysu}
\EEQSZ
and
\DEQSZ
d{v}(t) &= &\big(r_v \DeltaA  v(t)+\beta \eta(t) 
-\alpha v(t)\Big)\, dt +\sigma_v v(t)\, dW_2(t),\,\quad t\in [0,T].\phantom{\big|}\label{sysv}
\EEQSZ

First, note that due to Proposition \ref{prop existence}  the operator is well define on
$$\SubX\subset \mathbb{X}_\MA. 
$$
 In particular, for any $\eta\in \SubX$, there exist processes $u$ and $v$ solving system \eqref{sysu}--\eqref{sysv} such that $u\in  \mathbb{X}_\MA$ 
 and $\PP$-a.s.\  $v\in  L^{4}(0,T;H^2_{4}(\CO))\cap C([0,T];L^4(\CO))$.
 Due to Proposition \ref{prop conti}, there exist $R_1>0$ and $R_2>0$ such that $\mathcal{T}$ maps $\SubX$ into itself.
 Proposition \ref{continuity} gives the continuity of the operator $\CT$ from $\SubX$ into $\mathbb{X}_\MA$ and by Proposition \ref{CC} we know that the operator $\CT$ maps  $\SubX$ to a precompact set.

\item {\bf Application of the Schauder--Tychonov type Theorem:}
This Step coincide literally with Step III in our earlier paper \cite{EH+DM+TT_2020}.  Each process~$\eta\in  \mathcal{X}_\MA(R_1,R_2)$ is assigned a value~$\mathcal{T}\eta:=u$
and another value~$\Mr \eta:=v$
where~$(u,v)$ is the unique solution to system \eqref{sys1noise.ito.new}. In this way, we formulate Schauder-Tychonoff-Type Fixed Point Theorem and show that there exists an $u^\ast\in  \mathcal{X}_\MA(R_1,R_2)$ and a
corresponding $v^\ast$ solving equation
\eqref{sysv} with $\eta=u^\ast$,  and the pair $(u^\ast,v^\ast)$ is a
solution to system \eqref{sys1noise.ito.new}.

\end{steps}

\end{proof}

\section{Auxiliary Propositions specifying the properties of system \eqref{sysu}--\eqref{sysv}}\label{auxiliar}

We shortly give the exact form of the Burkholder--Davis--Gundy inequality which will be useful
during the course of analysis. Given a Wiener process ${W}$ being cylindrical on $\CH$
 over $\MA$, and a progressively measurable process $\xi\in M^2_\MA(0,T;L(\mathcal H,E))$,
let us define $\{Y(t):t\in[0,T]\}$ by
 $$
 Y(t):=\int_0^ t \xi(s)\, d {W}(s), \quad t\in[0,T].
 $$
Here, $\xi=m(u)$, where $u$ is function-valued  stochastic process and for each $t\in[0,T]$ $\xi(t)=m(u(t))$ is interpreted as a multiplication operator acting on the
elements of $\CH$, namely, $m(u):\CH\ni\psi \mapsto u\psi\in \CS'(\RR)$.
Taking the representation of $W$ by its sum
$$
W(t)=\sum_{k\in\BI}\psi_k \beta_k(t)
$$
then, in case $E$ is a Hilbert space, the Hilbert-Schmidt norm of $m(u)$ is given by
$$
|m(u)|_{L_{HS}(\CH,E)}:=\Big( \sum_{k\in\BI}|u\,\psi_k|^2_E\Big)^\frac 12 .
$$
Consequently, for any $p\ge 1$, we get for any  progressively measurable process~$\xi\in M^2_\MA(0,T;L_{HS}(\mathcal H,E))$,
\begin{equation}\label{BDG}
\EE \Big[\sup_{t \in [0, T]} |Y(t)|^p_{E}\Big] \leq C_p \, \EE \,\Big[ \int_0^T |\xi(t)|^2_{L_{HS}(\CH,E)}\, dt\Big]^\frac p2.
\end{equation}
Let $E=H^{-1}_2(\CO)$, then we have for all $\delta>0$ (see \cite[Theorem 3, p.\ 179]{runst})
\DEQSZ\label{BGI1}
\lk|m(u)\rk|_{L_{HS}(\CH,E)}^2\le \sum_{k\in\BI}|u\psi_k|^2_{H^{-1}_2}\le |u|^2_{H^\delta_2} \sum_{k\in\BI}|\psi_k|^2_{L^2}.
\EEQSZ
By the interpolation and the Young inequalities we infer that for all $\ep>0$, there exists a constant $C=C(\ep)>0$ such that  
\DEQSZ\label{BGI1int}
\qquad \lk|m(u)\rk|_{L_{HS}(\CH,E)}^2
\le \ep |u|^2_{H^1_2}+ C(\ep)|u|^2_{H^{-1}_2} \sum_{k\in\BI}|\psi_k|^2_{H^\delta_2 }.
\EEQSZ
For $E=L^2(\CO)$ we know for any $\delta>2$, $q$ with $\frac 1q+\frac 1\delta\ge \frac 12$, 
\DEQS
\qquad \lk|m(u)\rk|_{L_{HS}(\CH,E)}^2\le \sum_{k\in\BI}|u\psi_k|^2_{L^2}\le  \sum_{k\in\BI}|u|^2_{L^q}|\psi_k|^2_{L^\delta }.
\EEQS 
Again, by the interpolation and the Young inequalities we declare that for all $\ep>0$, there exists a constant $C=C(\ep)>0$ such that  
\DEQSZ\label{BGI2}
\qquad \lk|m(u)\rk|_{L_{HS}(\CH,E)}^2\le \ep |u|^2_{H^1_2}+ C(\ep)|u|^2_{H^{-1}_2} \sum_{k\in\BI}|\psi_k|^2_{L^\delta }.
\EEQSZ
In case, $E$ is a Banach space, the Banach space  $\gamma(\CH,E)$ is defined as  the completion of
$\CH\otimes E$ with respect to the norm
\DEQSZ\label{aaa}
\bigg( \Big|\sum_{k=1}^{N} \tilde \psi_k\otimes x_k\Big|^2\bigg)^\frac 12 &:=& \bigg(\Big| \EE \sum_{k=1}^{N} r_kx_k\Big|^2\bigg)^\frac 12,
\EEQSZ
for all finite sequences $x_1,\cdots, x_N \in E$
and $\{\tilde{\psi}_k:k=1,\ldots,N\}$ are assumed to be orthonormal in $\CH$. Here~$(r_k)_{k \geq 1}$ is a Rademacher sequence; for more resources we cite \cite{vannnervensurvey}. Throughout the paper, let us denote the norm introduced in \eqref{aaa} by $\gamma(\CH,E)$.
Similar to \eqref{BDG}, for any $p\ge 1$ we get for any  progressively measurable process $\xi\in M^2_\MA(0,T;\gamma(\CH,E))$,
\begin{equation*}\label{BDGBanach}
\EE \Big[\sup_{t \in [0, T]} |Y(t)|^p_{E}\Big] \leq C_p \, \EE \,\Big[ \int_0^T |\xi(t)|^2_{\gamma(\CH,E)}\, dt\Big]^\frac p2.
\end{equation*}
For further details we refer to the survey on $\gamma$--radonifying operator \cite{vannnervensurvey}.
Let $E= L^4(\CO)$ and $v\in H^{-1}_{4}(\CO)\cap H^1_4(\CO)$ then 
we get by the H\"older inequality and the Sobolev embedding
\begin{align}\label{BGI3}
\lk|m(u)\rk|_{\gamma(\CH,E)}^2 &\le \sum_{k\in\BI}|u\psi_k|^2_E\le C|u|^2_{L^{20}} \sum_{k\in\BI}|\psi_k|^2_{L^5}\nonumber
\\
& \le C\, |u|_{H^\frac 25_4} ^2 \sum_{k\in\BI}|\psi_k|^2_{L^5}\le \ep|u|^2_{H^1_4}+C|u|_{H^{-1}_4}^2\sum_{k\in\BI}|\psi_k|^2_{L^5}.
\end{align}
Let $E=L^{\gamma+1}(\CO)$. Then we get by similar calculations as above for any $0<\delta<\frac 1\gamma$
\DEQS
\lk|m(u)\rk|_{\gamma(\CH,E)}^2\le \sum_{k\in\BI}|u\psi_k|^2_E\le C|u|_{H^{\frac \delta2}_{\gamma+1}} \sum_{k\in\BI}|\psi_k|^2_{L^{\gamma+1}}
\le \ep|u|^2_{H^{\delta}_{\gamma+1}}
+C\, |u|^2_{L^{\gamma+1}}  \sum_{k\in\BI}|\psi_k|^2_{L^{\gamma+1}}.
\EEQS
Let $E= H^\frac 2{\gamma+1}_{\gamma+1}(\CO)$ and $u\in H^{\frac {\gamma+3}{\gamma+1}}_{\gamma+1}(\CO)\cap H^{-\frac {\gamma-1}{\gamma+1}}_{\gamma+1}(\CO)$ then we get
\DEQSZ\label{BGI5}
\lk|m(u)\rk|_{\gamma(\CH,E)}^2
\le \ep |u|_{H^{\frac {\gamma+3}{\gamma+1}}_{\gamma+1}}^2 +C\, |u|_{H^{-\frac {\gamma-1}{\gamma+1}}_{\gamma+1}}^2 \sum_{k\in\BI}|\psi_k|^2_{H^{2\frac {\gamma}{\gamma+1}}_{2}},
\EEQSZ
and finally for $E=L^{\gamma+1}(\CO)$ we achieve in an alternate fashion
\DEQSZ\label{BGIgamma}
\lk|m(u)\rk|_{\gamma(\CH,E)}^2\le \sum_{k\in\BI}|u\psi_k|^2_E\le C|u|_{L^{\gamma+1}} \sum_{k\in\BI}|\psi_k|^2_{H^\frac {\gamma-1}{\gamma+1}_{\gamma+1}}
\le C\, |u|_{L^{\gamma+1}}  \sum_{k\in\BI}|\psi_k|^2_{H^{2\frac {\gamma}{\gamma+1}}_{2}}.
\EEQSZ
At once, we compare Lemma 3.5 and Lemma 3.6 in \cite{gess2020} and state the following technical Proposition in our settings.
\begin{tproposition}\label{runst1}
Let $\gamma>1.$ Then, for any $\theta$ with $0<\theta<\frac 1 \gamma,$ there exists a constant $C>0$ such that for all $w$ with $|w|^\gamma\in H^1_2(\CO),$
		$$	
C		 |w|_{H^\theta_{2\gamma}}^\gamma \le ||w|^\gamma|_{H^1_2}.
		$$
Furthermore, for any $\eta$ with 
$\int_0^T |\eta^{\gamma-1}(s)\nabla \eta(s)|_{L^2}^2\, ds<\infty,$
there holds
\DEQS
\|\eta\|_{L^{2\gamma}(0,T;H^{\theta}_{2\gamma})}^{2\gamma} \le \int_0^T |\eta^{\gamma-1}(s)\nabla \eta(s)|_{L^2}^2\, ds.
\EEQS
\end{tproposition}
\begin{proof}
We know by Runst and Sickel, \cite[p.~365]{runst}  that for any  $p\in(1,\infty)$, $s\in(0,1)$, $\ep\in(0,\infty)$, and $\mu\in(0,1)$
	$$
| |w|^\mu |_{H^{s-\ep}_p}  = 	| |w|^\mu\mid _{F^{s-\ep}_{\frac p\mu,2}} \le 	C| |w|^\mu\mid_{F_{\frac p\mu,\frac 2\mu}^s} \le C	| w\mid _ {F_{p, 2 }^\frac{s}{\mu}}^\mu =C | w|_{H^{s}_p}^\mu, \quad w\in H^\frac{s}{\mu}_p(\CO).
	$$
Reformulating, we have that  for any  $p\in(1,\infty)$, $\mu,s\in(0,1)$ and $\ep\in(0,\infty)$ there exists a constant $C>0$ such that
	\DEQSZ
	\lk| |w|^\mu \rk|_{H^{s\mu-\ep}_{\frac p\mu}}\le  C |w|_{H^s_p}^\mu , \quad w\in H^s_p(\CO)\label{from_runst}
	.
		\EEQSZ
		From \eqref{from_runst} we know that  for any  $\gamma>1$, $\theta\in(0,\frac 1\gamma)$, $\ep>0$, and $p> \gamma$, there exists a constant $C>0$ such that
		\DEQS
	 |w|_{H^\theta_p}=||w^{\gamma} |^\frac 1\gamma |_{H^\theta_p}\leq C ||w|^\gamma|^{\frac 1\gamma}_{H^{\gamma(\theta +\ep)}_{\frac p\gamma}}
		.
		\EEQS
In particular, for any  $\theta<\frac 1 \gamma$ and $p=2\gamma$, there exists a constant $C>0$ such that 
for all $w$ with $|w|^\gamma\in H^1_2(\CO),$
		$$	
C		 |w|_{H^\theta_{2\gamma}}^\gamma \le ||w|^\gamma|_{H^1_2}.
		$$
		This proves the first part of the proposition. Again, since we know that
$$\int_0^ T |\eta^{\gamma-1}(s)\nabla \eta(s) |_{L^2}^2\, ds=\int_0^ T |\nabla \eta ^{\gamma}(s)|_{L^2}^2\, ds\sim \int_0^ T |\eta ^{\gamma}(s)|_{H^1_2}^2\, ds\le R_2<\infty,$$
hence, for any $0<\theta<\frac 1\gamma$ and $p=2\gamma$
$$\int_0^ T |\eta(s)|_{H^\theta_{2\gamma}}^{2\gamma}\, ds\le C \int_0^ T |\eta ^{\gamma-1}(s)\nabla \eta(s) |_{L^2}^2\, ds
.$$
This finishes the proof of the proposition.
\end{proof}
With these estimates at hand, we proceed to the next proposition to declare that the mapping $\CT$ on $\BX_\MA$
 is well defined.
\begin{proposition}\label{prop existence}
For any $R_1,R_2>0$ and $\eta\in\CX_\MA(R_1,R_2)$, there exists a unique pair $(u,v)$ of solutions to the systems \eqref{sysu}--\eqref{sysv} such that the following holds:
\DEQS
\EE\Big[
\sup_{0\le s\le T}|u(s)|_{H^{-1}_2}^2
+\int_0^T |u(s)|^{\gamma+1}_{L^{\gamma+1}}\, ds\Big]<\infty.
\EEQS
There exist two constants $C_1,C_2>0$ such that 
\DEQS
\EE \|v\|_{L^4(0,T;H^1_4)} &\le&  \EE|v_0|^{4}_{L^4}+C\, \EE \|\eta \|_{L^2(0,T;H^{-1}_2)}^2,
\EEQS
and
\DEQS
\EE \|v\|_{C([0,T];H^0_4)}^4 &\le&  \EE|v_0|^{4}_{L^4}+C\, \EE \|\eta \|_{L^2(0,T;H^{-1}_2)}^2.
\EEQS
If, in addition, $\EE|v_0|^{\gamma+1}_{H^{\frac{2\gamma}{\gamma+1}}_{\gamma+1}}<\infty$ , then there exist two constants $C_1,C_2>0$ such that 
\DEQSZ
\EE\Big[ \sup_{0\le s\le T}|v(s)|_{H^1_2}^{\gamma+1}\Big]
 &\le&  \EE|v_0|^{\gamma+1}_{H^1_{\gamma+1}}
+C_1\EE\|\eta\|_{L^{\gamma+1}(0,T;H^{-\frac{\gamma-1}{\gamma+1}}_{\gamma+1})}^{\gamma+1},\label{esti reg v1}
\EEQSZ
and
\DEQSZ
  \EE \|v\|^{\gamma+1}_{L(0,T;H^{\frac {\gamma+3}{\gamma+1}}_{\gamma+1})} &\le& \EE|v_0|^{\gamma+1}_{H^1_{\gamma+1}}+
C_2\, \EE \|\eta\|^{\gamma+1}_{L^{\gamma+1}(0,T;H^{-\frac{\gamma-1}{\gamma+1}}_{\gamma+1})}.\label{esti reg v2}
\EEQSZ

\end{proposition}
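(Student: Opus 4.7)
The plan is to exploit the semi-decoupled structure of \eqref{sysu}--\eqref{sysv}: the $v$-equation contains $\eta$ but not $u$, so I would first solve \eqref{sysv} for fixed $\eta\in\CX_\MA(R_1,R_2)$ and then use the resulting $v$ as data in \eqref{sysu}, collecting all the required a priori bounds via the It\^o formula applied to suitable Lyapunov-type functionals.

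\textbf{Step 1: solving and estimating $v$.} Equation \eqref{sysv} is a linear SPDE with drift $\beta\eta-\alpha v$ and multiplicative noise $\sigma_v v\,dW_2$. Since $\eta\in\CX_\MA(R_1,R_2)\subset L^2(\Omega;L^2(0,T;H^{-1}_2))$, existence and uniqueness of a $C([0,T];L^2(\CO))$-valued solution is classical via semigroup or variational methods. For the $L^4$-bounds, apply It\^o to $|v|^4_{L^4}$: the parabolic term yields $-12\,r_v\int |v|^2|\nabla v|^2\,dx$, which controls $|v|^4_{H^1_4}$ up to a constant; the drift $\int |v|^2 v(\beta\eta-\alpha v)\,dx$ is split by H\"older and Young into a piece absorbable in the dissipation and a piece bounded by $|\eta|^2_{H^{-1}_2}$; the It\^o correction is controlled by \eqref{BGI3} and the stochastic integral by BDG. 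Gronwall delivers both $\EE\|v\|^4_{L^4(0,T;H^1_4)}$ and $\EE\|v\|^4_{C([0,T];L^4)}$. Under the stronger assumption on $v_0$, the higher-regularity bounds \eqref{esti reg v1}--\eqref{esti reg v2} follow by applying It\^o to an $H^1_{\gamma+1}$-equivalent functional: the parabolic term produces the $H^{(\gamma+3)/(\gamma+1)}_{\gamma+1}$-dissipation, the forcing is absorbed by Young's inequality with conjugate exponents $(\gamma+1,(\gamma+1)/\gamma)$ to produce the claimed factor $\|\eta\|^{\gamma+1}_{L^{\gamma+1}(0,T;H^{-(\gamma-1)/(\gamma+1)}_{\gamma+1})}$, and the It\^o correction is handled via \eqref{BGI5} and \eqref{BGIgamma}.

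\textbf{Step 2: solving and estimating $u$.} With $v$ in hand, rewrite \eqref{sysu} as a stochastic porous medium equation with additional drift $f:=-\chi\,\Div(\eta\,\nabla v)+\mu u$ and linear multiplicative noise $\sigma_u u\,dW_1$. The bound $|\eta\nabla v|_{L^2}\le|\eta|_{L^4}|\nabla v|_{L^4}$ together with Step 1 and the $L^{\gamma+1}$-type control of $\eta$ built into $\CX_\MA(R_1,R_2)$ (invoking Proposition \ref{runst1} where needed) shows that $\Div(\eta\nabla v)\in L^2(\Omega;L^2(0,T;H^{-1}_2))$. Well-posedness of $u$ in $L^2(\Omega;C([0,T];H^{-1}_2))$ then follows from the monotone variational framework of Barbu--Da Prato--R\"ockner \cite{roeckner}. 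For the $H^{-1}_2\cap L^{\gamma+1}$ bound, apply It\^o to $|u|^2_{H^{-1}_2}$: the key identity $\langle u,\Delta u^{[\gamma]}\rangle_{H^{-1}_2\times H^1_2}=-|u|^{\gamma+1}_{L^{\gamma+1}}$ supplies the $L^{\gamma+1}$-in-time dissipation; the cross term rewrites as $\chi\langle\nabla(-\Delta)^{-1}u,\eta\nabla v\rangle_{L^2}$ and is controlled by Cauchy--Schwarz and Young; the It\^o correction is bounded via \eqref{BGI1int} and the noise integral by BDG. Gronwall closes the estimate.

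\textbf{Expected main obstacle.} The principal difficulty should lie in the cross-term estimate in Step 2: the energy identity for $|u|^2_{H^{-1}_2}$ furnishes only an $L^{\gamma+1}$-in-space dissipation, not an $H^1_2$-dissipation, so the perturbation $\Div(\eta\nabla v)$ must be absorbed by exploiting the integrability of $\eta$ in $L^{\gamma+1}$ and of $\nabla v$ in $L^4$ simultaneously, with careful interpolation invoking Proposition \ref{runst1}. Balancing the exponents so that the resulting terms are controllable in expectation by the data $R_1,R_2$ and by the Step 1 bounds on $v$ is the delicate point; without such a balance the Gronwall argument in Step 2 does not close.
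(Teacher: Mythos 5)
Your overall architecture --- solve \eqref{sysv} first for fixed $\eta$, then feed the resulting $v$ into \eqref{sysu} and treat the latter in a monotone variational framework --- is exactly the paper's, and your Step~2 essentially reproduces the paper's Claim \ref{existence} (the paper verifies (H1), (H2$'$), (H3), (H4$'$) of \cite[Theorem 5.1.3]{weiroeckner} rather than citing \cite{roeckner}, and estimates the cross term via $|\eta\nabla v|_{L^1}\le|\eta|_{L^{\gamma+1}}|\nabla v|_{L^{(\gamma+1)/\gamma}}$ together with $L^1(\CO)\hookrightarrow H^{-1}_{(\gamma+1)/\gamma}(\CO)$ instead of your $L^2$--$L^4$ splitting, which would additionally require spatial $L^4$-control of $\eta$ that $\CX_\MA(R_1,R_2)$ does not supply for small $\gamma$).

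The genuine gap is in Step~1. The bounds on $v$ asserted in the proposition are maximal-regularity statements --- $\EE\|v\|^4_{L^4(0,T;H^1_4)}$ asks for one full derivative in $L^4$, and \eqref{esti reg v2} asks for $1+\tfrac{2}{\gamma+1}$ derivatives in $L^{\gamma+1}$ --- and these cannot be extracted from the It\^o/energy identities you propose. Applying It\^o to $|v|^4_{L^4}$ yields the dissipation $c\int|v|^2|\nabla v|^2\,dx=\tfrac c4\int|\nabla(v^2)|^2\,dx$, which (by arguments of the type in Technical Proposition \ref{runst1}) controls roughly half a derivative of $v$ in $L^4$, not $|v|^4_{H^1_4}$; moreover the forcing term $\beta\int|v|^2v\,\eta\,dx$ with $\eta\in H^{-1}_2$ requires $v^3\in H^1_2$, i.e.\ control of $\int|v|^4|\nabla v|^2\,dx$, a strictly stronger quantity than the available dissipation, so the Young-absorption you describe does not close. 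The same objection applies, more severely, to obtaining \eqref{esti reg v1}--\eqref{esti reg v2} from an ``$H^1_{\gamma+1}$-equivalent functional.'' The paper avoids all of this by invoking the stochastic maximal $L^p$-regularity theorem of van Neerven--Veraar--Weis \cite[Theorem 4.5]{vanNeerven1} for the linear equation \eqref{sysv}, after checking that the Neumann Laplacian has a bounded $H^\infty$-calculus on the relevant spaces and that the multiplication operator $B(v)\psi=v\psi$ satisfies hypothesis (HB) via \eqref{BGI3} and \eqref{BGI5}; the four regularity scales in Claim \ref{existencev} correspond to four choices of $(X_0,X_1,p)$ in that theorem. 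Without this (or an equivalent maximal-regularity argument for the deterministic and stochastic convolutions) the $H^1_4$ and $H^{(\gamma+3)/(\gamma+1)}_{\gamma+1}$ estimates --- which are precisely what the coercivity and growth verifications in your Step~2 consume --- are not established.
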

\begin{remark}\label{remark11}
If $\gamma\ge 3$, then $\frac {\gamma+1}{\gamma-1}\le 2$ and we obtain
\DEQS\label{reg_v}
\EE \|v\|_{L^\frac {\gamma+1}{\gamma-1}(0,T;H^1_{\frac {\gamma+1}\gamma})} ^\frac {\gamma+1}{\gamma-1}
\le C\,\lk( \EE \|v\|_{L^{2}(0,T;H^1_2)} ^{2}\rk)^\frac {\gamma+1}{2(\gamma-1)}\le C\,\lk( \EE \|v\|_{L^{\gamma+1}(0,T;H^1_{\gamma+1})} ^{\gamma+1}\rk)^\frac {1}{\gamma-1}.
\EEQS
\end{remark}

\begin{proof}
The proof consists of two steps. First in Claim \ref{existencev}, we will investigate equation \eqref{sysv} and show that there exists a unique solution $v$ to \eqref{sysv} and
specify the integrability and regularity properties of $v$. Then as a second step in Claim \ref{existence}, we will give the existence of the unique solution $u$ to \eqref{sysu} and
show that $u\in\CM_\MA^{\gamma+1}(0,T;L^{\gamma+1}(\CO))$.
\begin{claim}\label{existencev}
Let us consider the equation
\DEQSZ\label{eq v 1}
d{v}(t) &=& \Big(r_v \DeltaA  v(t)+\beta u(t) 
-\alpha  v(t)\Big)\, dt +\sigma_v v(t) dW_2(t),\,\quad t\in \RR_0^+,\phantom{\big|}
\EEQSZ
with initial condition $v(0)=v_0$, where $v_0$ is a $\CF_0$--measurable random data.
\begin{enumerate}
\item
If  $\EE|v_0|_{L^2}^2<\infty$ and $\eta \in \CM^{2}_{\mathfrak{A}}(0,T;H^{-1}_2(\CO))$, then a unique solution $v$ to \eqref{eq v 1} exists and
there exists a constant $C>0$ such that
\DEQS
\EE \|v\|_{L^2(0,T;H^1_2)}\le \EE|v_0|_{L^2}^2 +C \EE \|\eta \|_{L^2(0,T;H^{-1}_2)}^2,
\EEQS
and
\DEQS
\EE \|v\|_{C([0,T];L^2)}^2 &\le&  \EE|v_0|^{2}_{L^2}+C\, \EE \|\eta \|_{L^2(0,T;H^{-1}_2)}^2.
\EEQS
\item
If  $\EE|v_0|_{L^4}^2<\infty$ and $\eta \in \CM^{2}_{\mathfrak{A}}(0,T;H^{-1}_4(\CO))$, then a unique solution $v$ to \eqref{eq v 1} exists and
there exists a constant $C>0$ such that
\DEQS
\EE \|v\|^2_{L^4(0,T;H^1_4)} &\le&  \EE|v_0|^{2}_{L^4}+C\, \EE \|\eta \|_{L^2(0,T;H^{-1}_4)}^2,
\EEQS
and
\DEQS
\EE \|v\|_{C([0,T];L^4)}^2 &\le&  \EE|v_0|^{2}_{L^4}+C\, \EE \|\eta \|_{L^2(0,T;H^{-1}_4)}^2.
\EEQS

\item
If  $ \EE|v_0|^{\gamma+1}_{H^1_{\gamma+1}}<\infty$ and $\eta\in \CM^{\gamma+1}_{\mathfrak{A}}(0,T;H^{-\frac{\gamma-1}{\gamma+1}}_{\gamma+1}(\CO))$, then a unique solution $v$ to \eqref{eq v 1} exists and
there exist constants $C_1,C_2>0$ such that
\DEQS
\EE\Big[ \sup_{0\le s\le T}|v(s)|_{H^1_{\gamma+1}}^{\gamma+1}\Big]
\le \EE|v_0|^{\gamma+1}_{H^1_{\gamma+1}}
+C_1\EE\|\eta\|_{L^{\gamma+1}(0,T;H^{-\frac{\gamma-1}{\gamma+1}}_{\gamma+1})}^{\gamma+1}.
\EEQS
and
\DEQS  \EE \|v\|^{\gamma+1}_{L^{\gamma+1}(0,T;H^{\frac {\gamma+3}{\gamma+1}}_{\gamma+1})}&\le &\EE|v_0|^{\gamma+1}_{H^1_{\gamma+1}}+
C_2\, \EE \|\eta\|^{\gamma+1}_{L^{\gamma+1}(0,T;H^{-\frac{\gamma-1}{\gamma+1}}_{\gamma+1})}.
\EEQS

\item
If $\EE|v_0|^{\gamma+1}_{H^{\frac{2\gamma}{\gamma+1}}_{\gamma+1}}<\infty$ and $\eta\in \CM^{\gamma+1}_{\mathfrak{A}}(0,T;L^{\gamma+1}(\CO))$,
then a unique solution $v$ to \eqref{eq v 1} exists and
there exist constants $C_1,C_2>0$ such that
\DEQS
\EE \|v\|_{L^{\gamma+1}(0,T;H^2_{\gamma+1})}^{\gamma+1}\le\EE|v_0|_{H^{\frac {2 \gamma}{\gamma+1}}_{\gamma+1}} 
^{\gamma+1}+C_1\EE \|\eta\|_{L^{\gamma+1}(0,T;L^{\gamma+1})}^{\gamma+1},
\EEQS
and
\DEQS
\EE \|v\|_{C([0,T];H^{\frac {2 \gamma}{\gamma+1}} _{\gamma+1})}^{\gamma+1}\le \EE|v_0|_{  H_{\gamma+1}^{\frac {2 \gamma}{\gamma+1} }   }
^{\gamma+1}+C_2\EE \|\eta\|_{L^{\gamma+1}(0,T;L^{\gamma+1})}^{\gamma+1}.
\EEQS\end{enumerate}
\end{claim}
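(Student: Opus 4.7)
The four parts of the claim concern the linear SPDE $dv = (r_v \Delta v + \beta\eta - \alpha v)\,dt + \sigma_v v\,dW_2$ with $v(0)=v_0$, in four different function-space settings. Since the equation is linear in $v$ with known forcing $\beta\eta$, existence and uniqueness of the solution in the target class will follow in each case by a standard Galerkin approximation once the a priori estimate asserted in the claim is established: the approximants $v^n$ solve finite-dimensional SDEs, are globally well-posed thanks to the linear structure, and pass to the limit weakly in the Banach space where the uniform estimate lives. The substantive content is therefore the a priori bound itself, which I would establish case by case.

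For part (1), I would apply It\^o's formula to $\phi(v) = |v|_{L^2}^2$. Integration by parts turns the Laplacian into the good dissipation $-2r_v|\nabla v|_{L^2}^2$, the coupling is handled by the $H^1_2$--$H^{-1}_2$ duality as $2\beta\langle v,\eta\rangle \le \tfrac{r_v}{2}|\nabla v|_{L^2}^2 + C|\eta|_{H^{-1}_2}^2$, and the It\^o correction $\sigma_v^2\sum_k|v\psi^{(2)}_k|_{L^2}^2$ is controlled by the $E=L^2$ analogue of \eqref{BGI1int}--\eqref{BGI2}, so that the small $\varepsilon|v|_{H^1_2}^2$ portion is absorbed into the dissipation. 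Taking expectation and applying Gronwall in integrated form yields the $L^2(0,T;H^1_2)$ bound; to obtain the $\sup$-in-time bound one first takes the supremum and then applies Burkholder--Davis--Gundy to the martingale $2\sigma_v\int_0^t (v, v\,dW_2)_{L^2}$. Part (2) is strictly analogous with $\phi(v) = |v|_{L^4}^4$: the Laplacian contributes $-3r_v|\nabla(v^2)|_{L^2}^2$, the It\^o correction is estimated via \eqref{BGI3}, and the coupling $4\beta\int v^3\eta$ is closed by integration by parts together with the $H^1_{4/3}$--$H^{-1}_4$ duality and H\"older in the form $|v^3|_{H^1_{4/3}} \lesssim |v|_{L^4}^2|v|_{H^1_4}$.

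For parts (3) and (4), $L^{\gamma+1}$ is not Hilbert, so I would no longer apply It\^o to a single scalar functional $|v|_E^p$; instead I would appeal to stochastic $L^{\gamma+1}$-maximal regularity for the linearized problem $dv = r_v\Delta v\,dt + g\,dt + \sigma_v v\,dW_2$ with $g = \beta\eta - \alpha v$. In part (3) the initial condition lies in the trace space $H^1_{\gamma+1}$, the driver lies in $L^{\gamma+1}(0,T; H^{-(\gamma-1)/(\gamma+1)}_{\gamma+1})$, and the characteristic gain of $2/(\gamma+1)$ derivatives in the time-averaged norm produces the target space $H^{(\gamma+3)/(\gamma+1)}_{\gamma+1}$ exactly as required; in part (4) the entire scheme is shifted up by two derivatives, with $v_0 \in H^{2\gamma/(\gamma+1)}_{\gamma+1}$ and $\eta \in L^{\gamma+1}(0,T;L^{\gamma+1})$ yielding $v \in L^{\gamma+1}(0,T;H^2_{\gamma+1})$ and $v \in C([0,T];H^{2\gamma/(\gamma+1)}_{\gamma+1})$. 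The multiplicative noise trace $\sigma_v v\,dW_2$ is placed into the appropriate $\gamma$-radonifying class using \eqref{BGI5} and \eqref{BGIgamma}; the $\varepsilon$-small part is absorbed into the maximal regularity estimate and the residual low-norm contribution is closed by Gronwall, after which Burkholder--Davis--Gundy delivers the $C([0,T];\cdot)$ bounds.

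The principal technical obstacle is the multiplicative nature of the noise: both the It\^o correction and the $\gamma$-radonifying trace of the stochastic integrand scale with the same Sobolev norm of $v$ that appears on the left-hand side of every estimate, so the argument at each regularity level must take the absorption-Young form $\varepsilon\|v\|_{\mathrm{good}} + C_\varepsilon\|v\|_{\mathrm{weaker}}$. The inequalities \eqref{BGI1int}--\eqref{BGIgamma} compiled in the excerpt are precisely what makes this absorption feasible at each of the four regularity levels, and correctly identifying which one to apply (and how to split the $\varepsilon$) is the main bookkeeping burden of the proof.
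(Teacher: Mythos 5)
Your treatment of parts (3) and (4) coincides with the paper's: the authors also invoke stochastic maximal $L^p$-regularity, namely Theorem 4.5 of \cite{vanNeerven1} with $p=\gamma+1$ and the interpolation scale $X_0=H^{-\frac{\gamma-1}{\gamma+1}}_{\gamma+1}(\CO)$, $X_1=H^{\frac{\gamma+3}{\gamma+1}}_{\gamma+1}(\CO)$ (respectively $X_0=L^{\gamma+1}$, $X_1=H^2_{\gamma+1}$), verifying Hypothesis (HB) for the multiplication operator $B(v)[\psi]=v\psi$ via \eqref{BGI5} and Assumption \ref{wiener}. Where you diverge is in parts (1) and (2): the paper applies the \emph{same} abstract theorem there as well (with $p=2$, $X_0=H^{-1}_2$, $X_1=H^1_2$ for (1), and $X_0=H^{-1}_4$, $X_1=H^1_4$ for (2), using \eqref{BGI2} and \eqref{BGI3} to check (HB)), whereas you propose direct It\^o energy estimates. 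For part (1) your route is perfectly adequate and arguably more elementary, since the $H^1_2$-dissipation produced by testing with $v$ is exactly the norm appearing in the conclusion.

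For part (2), however, your energy argument has a genuine gap. Applying It\^o's formula to $|v|_{L^4}^4$ produces the dissipation $-12 r_v\int_\CO v^2|\nabla v|^2\,dx = -3r_v|\nabla(v^2)|_{L^2}^2$, which controls $\|v^2\|_{H^1_2}$ (equivalently, by the Runst--Sickel-type estimate of Technical Proposition \ref{runst1}, a norm of the order $\|v\|_{H^\theta_4}$ with $\theta<\tfrac12$), but \emph{not} the full $\|v\|_{H^1_4}$-norm asserted in the claim: that would require control of $\int_\CO|\nabla v|^4\,dx$, which does not follow from $\int_\CO v^2|\nabla v|^2\,dx<\infty$. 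So the $L^4(0,T;H^1_4)$ bound of item (2) cannot be closed by the scalar functional $|v|_{L^4}^4$ alone; one genuinely needs the maximal-regularity route (or an equivalent $L^4$-scale smoothing argument for the stochastic convolution) that the paper uses. The $C([0,T];L^4)$ bound in (2) would survive your argument, but the space-regularity half would not.
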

\del{
\begin{remark}\label{remark11}
If $\gamma\ge 2$, then $\frac {\gamma+1}{\gamma-1}\le \gamma+1$ and we obtain
\DEQSZ\label{reg_v}
\EE \|v\|_{L^\frac {\gamma+1}{\gamma-1}(0,T;H^1_{\frac {\gamma+1}\gamma})} ^\frac {\gamma+1}{\gamma-1}
\le C\,\lk( \EE \|v\|_{L^{\gamma+1}(0,T;H^1_2)} ^{\gamma+1}\rk)^\frac 1{\gamma-1}\le C\,\lk( \EE \|v\|_{L^{\gamma+1}(0,T;H^1_{\gamma+1})} ^{\gamma+1}\rk)^\frac 1{\gamma-1}.
\EEQSZ

\end{remark}}

\begin{proof}[Proof of Claim \ref{existencev}:]
By Example 3.2 (1)-(4) \cite{vanNeerven1} we know that the Laplace operator with Neumann boundary condition has a bounded $H^\infty$-calculus on $H^{-1}_2(\CO)$, $L^2(\CO)$ and $L^{\gamma+1}(\CO)$.
The item (a) to (d) in this claim is an application of Theorem 4.5 in \cite{vanNeerven1}.
To show item (a), let us put in Theorem 4.5 $p=2$, $X_0=H^{-1}_2(\CO)$, $X_1=H^1_2(\CO)$, $B(v)[\psi]:=v\,\psi$, $\psi\in \mathcal H$, $F(v)=\alpha v$, (and incorporating $f(t):=\eta(t)$).
Note, that the Hilbert-Schmidt norm of $B$  is evaluated in \eqref{BGI2} and due to Assumption \ref{wiener}, we know $B$ satisfies Hypothesis (HB) (see \cite[p.\ 1384]{vanNeerven1}).
It follows that
there exists a constant $C>0$ such that
\DEQS  \EE \|v\|^{2}_{L(0,T;H^1_{2})}&\le & \EE|v_0|^2_{L^2}+
C\, \EE \|\eta\|^2_{L(0,T;H^{-1}_2)}.
\EEQS

To show item (b), let us substitute in Theorem 4.5 $p=2$, $X_0=H^{-1}_4(\CO)$, $X_1=H^1_4(\CO)$, $B(v)[\psi]:=v\,\psi$, $\psi\in \mathcal H$, $F(v)=\alpha v$, (and incorporating $f(t):=\eta(t)$).
Note, that $|B(u)|_{\gamma(\mathcal H,L^4)}$  is evaluated in \eqref{BGI3} and due to Assumption \ref{wiener}, we infer that $B$ satisfies Hypothesis (HB) (see \cite[p.\ 1384]{vanNeerven1}).
It is elementary to see that
there exists a constant $C>0$ such that
\DEQS  \EE \|v\|^{2}_{L^2(0,T;H^1_{4})}&\le &\EE|v_0|^2_{L^4}+
C\, \EE \|\eta\|^2_{L(0,T;H^{-1}_4)},
\EEQS
and
\DEQS
\EE \|v\|_{C([0,T];L^4)}^2 &\le&  \EE|v_0|^{2}_{L^4}+C\, \EE \|\eta \|_{L^2(0,T;H^{-1}_4)}^2.
\EEQS

To show item (c), let us continue using Theorem 4.5 with $p=\gamma+1$,
$$X_0=H^{-\frac{\gamma-1}{\gamma+1}}_{\gamma+1}(\CO),\quad X_1=H^{\frac {\gamma+3}{\gamma+1}}_{\gamma+1}(\CO), \quad  X_{\frac 12 }=H^{\frac {2}{\gamma+1}}_{\gamma+1}(\CO),\quad \mbox{and}\quad
X_{1-\frac 1{p},p}=H^1_{\gamma+1}(\CO).
$$
Furthermore, let us put $B(v)[\psi]:=v\,\psi$, $\psi\in \mathcal H$, $F(v)=\alpha v$, (and incorporating $f(t):=\eta(t)$).
Note, that $|B(u)|_{\gamma(\mathcal H,X_{\frac 12})}$  is evaluated in \eqref{BGI5} and owing to Assumption \ref{wiener}, we know $B$ satisfies Hypothesis (HB) (see \cite[p.\ 1384]{vanNeerven1}).
Consequently,
there exists a constant $C>0$ such that
\DEQS  \EE \|v\|^{\gamma+1}_{C([0,T];H^1_{\gamma+1})}&\le &\EE|v_0|^{\gamma+1}_{H^1_{\gamma+1}}+
C\, \EE \|\eta\|^{\gamma+1}_{L^{\gamma+1}(0,T;X_0)},
\EEQS
and
\DEQS  \EE \|v\|^{\gamma+1}_{L^{\gamma+1}(0,T;H^{\frac {\gamma+3}{\gamma+1}}_{\gamma+1})}&\le &\EE|v_0|^{\gamma+1}_{H^1_{\gamma+1}}+
C\, \EE \|\eta\|^{\gamma+1}_{L^{\gamma+1}(0,T;X_0)}.
\EEQS

Finally, to show item (d), let us continue Theorem 4.5 with $p=\gamma+1$,
$$X_0=L^{\gamma+1}(\CO),\quad X_1=H^{2}_{\gamma+1}(\CO), \quad  X_{\frac 12 }=H^{1}_{\gamma+1}(\CO),\quad \mbox{and}\quad
X_{1-\frac 1{p},p}=H^{\frac {2\gamma}{\gamma+1}}_{\gamma+1}(\CO).
$$
Furthermore, we again incorporate $B(v)[\psi]:=v\,\psi$, $\psi\in \mathcal{H}$, $F(v)=\alpha v$, (and take $f(t):=\eta(t)$).
Once again we note that $|B(u)|_{\gamma(\mathcal H,X_{\frac 12})}$  is evaluated in \eqref{BGI5} and $B$ satisfies Hypothesis (HB) (see \cite[p.\ 1384]{vanNeerven1}).
It follows that
there exists a constant $C>0$ such that (d) holds.

\del{We note that the operator $A_1=r_v A-\alpha I$ with $D(A_1)=H^{2}_{\gamma+1}(\CO)$ generates an analytic semigroup in $L^{\gamma+1}(\CO)$. Hence equation \eqref{sysv} has a solution which can be written as
\DEQS
v(t)&=& e^{t(r_v A -\alpha I)}v_0+\beta \int_0^ t e^{(t-s)(r_vA -\alpha I)} \eta(s)\, ds+\int_0^ t e^{(t-s)(r_v A-\alpha I)}\sigma_v\, v(s)\,dW_2(s).
\EEQS
Using the interpolation results (see Theorem 6.2.4 \cite{bergh}) and since $\frac{2 \gamma}{\gamma+1} \notin \NN$, we observe that
\begin{align}
(L^{\gamma+1}(\CO),H^2_{\gamma+1}(\CO))_{{1-\frac{1}{\gamma+1}},\gamma+1}
=H^{\frac{2\gamma}{\gamma+1}}_{\gamma+1}(\CO).
\end{align}
For $v_0 \in H^{\frac{2\gamma}{\gamma+1}}_{\gamma+1}(\CO)$ and using Theorem 4.5 of \cite{vannnerven12} we have estimates \eqref{esti reg v1} and \eqref{esti reg v2}.
}
\end{proof}

\begin{claim}\label{existence}
For $\gamma>1$, any $R_1,R_2>0$ and $\eta\in\CX_\MA(R_1,R_2)$, there exists a unique pair $(u,v)$ of solutions to the systems \eqref{sysu}--\eqref{sysv} such that
\begin{align}\label{esti uv}
\EE\Big[
\sup_{0\le s\le T}|u(s)|_{H^{-1}_2}^2
+\int_0^T |u(s)|^{\gamma+1}_{L^{\gamma+1}}\, ds\Big]<\infty.
\end{align}
\end{claim}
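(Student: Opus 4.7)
The plan is to regard equation \eqref{sysu} as a stochastic porous medium equation with a prescribed $(\eta,v)$--dependent drift, since by Claim \ref{existencev}(b) the component $v$ has already been constructed with $v\in L^4(0,T;H^1_4(\CO))\cap C([0,T];L^4(\CO))$ almost surely. I will rewrite \eqref{sysu} as
$$du(t)=r_u\Delta u^{[\gamma]}(t)\,dt+f(t)\,dt+\sigma_u u(t)\,dW_1(t),\qquad f(t):=-\chi\Div\big(\eta(t)\nabla v(t)\big)+\mu u(t),$$
and invoke the variational solvability theory of \cite{roeckner} in the Gelfand triple $L^{\gamma+1}(\CO)\hookrightarrow H^{-1}_2(\CO)\hookrightarrow (L^{\gamma+1}(\CO))^\ast$. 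The operator $u\mapsto -\Delta u^{[\gamma]}$ is the subgradient of $u\mapsto \tfrac{1}{\gamma+1}|u|^{\gamma+1}_{L^{\gamma+1}}$ and is maximal monotone on this triple, with the dissipation identity $(-\Delta u^{[\gamma]},u)_{H^{-1}_2}=|u|^{\gamma+1}_{L^{\gamma+1}}$ at its heart.

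The first concrete step is to verify that the forcing lies in the dual, $f\in L^2(\Omega\times(0,T);H^{-1}_2(\CO))$. By duality and H\"older,
$$|\Div(\eta\nabla v)|_{H^{-1}_2}\le|\eta\nabla v|_{L^2}\le|\eta|_{L^4}|\nabla v|_{L^4},$$
and the $L^4$--bound on $\nabla v$ is provided by Claim \ref{existencev}(b). The $L^4$--bound on $\eta$ is the delicate point: membership in $\CX_\MA(R_1,R_2)$ gives only $\eta\in L^\infty(0,T;L^{\gamma+1})$ directly, which is insufficient for $\gamma<3$, so I will unlock the extra regularity encoded in the gradient term $\int_0^T|\eta^{\gamma-1}\nabla\eta|^2_{L^2}\,ds$ via Technical Proposition \ref{runst1}, obtaining $\eta\in L^{2\gamma}(0,T;H^\theta_{2\gamma}(\CO))$ for any $\theta<1/\gamma$; in two dimensions the Sobolev embedding $H^\theta_{2\gamma}\hookrightarrow L^4$ then yields the required control, provided $\theta$ is chosen close enough to $1/\gamma$.

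With $f$ in the dual space, I will construct $u$ by a Galerkin (or Yosida) regularisation following \cite{roeckner}, deriving uniform a priori bounds by applying It\^o's formula to $|u_n|^2_{H^{-1}_2}$. Schematically one obtains
$$d|u|^2_{H^{-1}_2}+2r_u|u|^{\gamma+1}_{L^{\gamma+1}}\,dt=2(\eta\nabla v,\nabla(-\Delta)^{-1}u)_{L^2}\,dt+2\mu|u|^2_{H^{-1}_2}\,dt+(\text{It\^o correction})\,dt+dM_t,$$
where $M$ is a local martingale whose quadratic variation is controlled via \eqref{BDG} together with the noise bounds \eqref{BGI1}--\eqref{BGI1int}. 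Absorbing the cross terms by Young's inequality and closing with Gronwall produces the estimate \eqref{esti uv}. Passage to the limit in the Galerkin scheme uses the monotonicity of $\xi\mapsto\xi^{[\gamma]}$ in Minty's fashion, and the same monotonicity gives uniqueness via It\^o's formula on $|u^{(1)}-u^{(2)}|^2_{H^{-1}_2}$. I expect the main obstacle to be the integrability of the chemotactic drift in Step~2: the regularity demanded is exactly at the edge of what Proposition \ref{runst1} delivers, with essentially no room to spare when $\gamma\in(1,3)$.
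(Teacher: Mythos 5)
Your overall architecture is the one the paper uses: solve \eqref{sysv} first (Claim \ref{existencev}), then treat \eqref{sysu} as a stochastic porous--medium equation with a prescribed drift in the Gelfand triple $L^{\gamma+1}(\CO)\subset H^{-1}_2(\CO)\subset (L^{\gamma+1}(\CO))^{\ast}$, and close with monotonicity for both existence and uniqueness. The paper verifies (H1), (H2$'$), (H3), (H4$'$) of \cite[Theorem 5.1.3]{weiroeckner} rather than running a Galerkin scheme by hand, but that is a cosmetic difference.

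The genuine problem is where you place the chemotactic forcing. You insist on $f=-\chi\Div(\eta\nabla v)+\mu u$ lying in $L^2(\Omega\times(0,T);H^{-1}_2(\CO))$, which via $|\Div(\eta\nabla v)|_{H^{-1}_2}\le|\eta|_{L^4}|\nabla v|_{L^4}$ and $\nabla v\in L^4(0,T;L^4)$ forces you to produce $\eta\in L^4(0,T;L^4(\CO))$. Technical Proposition \ref{runst1} gives $\eta\in L^{2\gamma}(0,T;H^{\theta}_{2\gamma}(\CO))$ for $\theta<1/\gamma$; the \emph{spatial} embedding $H^{\theta}_{2\gamma}\hookrightarrow L^4$ in two dimensions does work for $\theta$ close to $1/\gamma$, but the \emph{temporal} exponent is $2\gamma$, which is strictly less than $4$ for $\gamma\in(1,2)$. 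So the H\"older pairing $\int_0^T|\eta|^2_{L^4}|\nabla v|^2_{L^4}\,dt$ does not close on that range without an additional interpolation against the $L^{\infty}(0,T;L^{\gamma+1})$ bound encoded in $\CX_\MA(R_1,R_2)$ — a step you do not carry out. The paper avoids this entirely: it never asks the drift to live in the pivot space, only in $V^{\ast}=(L^{\gamma+1})^{\ast}$, estimating
\begin{equation*}
\big|\,{}_{V^{\ast}}\langle \Div(\eta\nabla v),w\rangle_V\big|
\le |\eta\nabla v|_{H^{-1}_{\frac{\gamma+1}{\gamma}}}|w|_{L^{\gamma+1}}
\le |\eta|_{L^{\gamma+1}}\,|\nabla v|_{L^{\frac{\gamma+1}{\gamma}}}\,|w|_{L^{\gamma+1}},
\end{equation*}
via the two--dimensional embedding $L^1(\CO)\hookrightarrow H^{-1}_{\frac{\gamma+1}{\gamma}}(\CO)$, and then Young's inequality with exponents $\gamma+1$, $\frac{\gamma+1}{\gamma-1}$, $\gamma+1$. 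This only consumes the $L^{\gamma+1}_{t,x}$ control on $\eta$ that is built into $\CX_\MA(R_1,R_2)$ and the $L^{\frac{\gamma+1}{\gamma}}$ norm of $\nabla v$, which is exactly what the coercivity and growth hypotheses of the variational theorem require. You should redo your Step 2 with the drift measured in $V^{\ast}$; as written, the integrability you need is not merely ``at the edge'' of Proposition \ref{runst1} but beyond it for part of the stated range $\gamma>1$.
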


\begin{proof}[Proof of Claim \ref{existence}:]
Let us fix $R_1,R_2>0$ and $\eta\in \CX_\MA(R_1,R_2)$. We know by Claim \ref{existencev} that
there exists unique $v\in C([0,T];H^{\frac {2\gamma}{\gamma+1}}(\CO))\,\, \PP$--a.s to \eqref{sysv} and $v$ satisfies the estimate \eqref{reg_v}. We now show that there exists a
solution $u$ to \eqref{sysu} by
verifying  the assumptions of Theorem 5.1.3 in \cite{weiroeckner} for
$\gamma>1$. Let us consider the Gelfand triple
\[
V\subset \CH\cong \CH^{\ast}\subset V^{\ast},
\]
with $\CH:=H_{2}^{-1}(\CO)$, the dual space $\CH^\ast$ of ${H}_{2}^{1}(\CO)$
(corresponding to Neumann boundary conditions) and set $V:=L^{p}(\CO)$ for some fix $p=\gamma+1$ and its conjugate $ p'=\frac{\gamma+1}{\gamma}$.
Let $V^{\ast}=\{
u \in \mathcal{D}(\CO):u=-\DeltaA  v,\,\, v \in L^{p'}
\}$.
The duality $_{V^{\ast}}\langle \cdot, \cdot \rangle_V$ is defined as
\begin{align*}
_{V^{\ast}}\langle u,w \rangle_V
=\int_{\CO} ((-\DeltaA )^{-1}u)(x)\, w(x)\, dx
.
\end{align*}
We set
\[
\mathcal A(t,u,\omega):=\mathcal A_{v}(u):=r_u\DeltaA  u^{[\gamma]}-\chi\di(\eta(t,\omega) \nabla v(t,\omega)).
\]
Firstly, let us investigate the process
$$
\xi:[0,T]\times \Omega \ni (t,\omega)\mapsto \Div (\eta(t,\omega)\nabla v(t,\omega)),
$$
where $v$ solves \eqref{sysv}. We note that
\DEQS
_{V^{\ast}}\langle \xi(t,\omega),w \rangle_V &=& \int_\CO (-\Delta)^{-1}\lk( \Div (\eta(t,\omega)\nabla v(t,\omega)\rk) w(x)\, dx.
\EEQS
Now using the fact that $L^1(\CO)\hookrightarrow H^{-1}_{\frac {\gamma+1}{\gamma}}(\CO)$  for  $d=1,2$ and the Young inequality, we see that for any $\ep_1,\ep_2>0$, there exists a constant $C=C(\ep_1,\ep_2)>0$ such that
\DEQSZ\label{esti vv.sta}
\lqq{ \lk|_{V^{\ast}}\langle \xi(t,\omega),w \rangle_V \rk|
\le  \lk|  \eta(t,\omega)\nabla v(t,\omega)\rk|_{H^{-1}_\frac {\gamma+1}{\gamma}} |w|_{L^{\gamma+1}}\le  \lk|  \eta(t,\omega)\nabla v(t,\omega)\rk|_{L^1} |w|_{L^{\gamma+1}}}
\nonumber\\
&\le &
\lk|\eta(t,\omega)\rk|_{L^{\gamma+1}} \lk| \nabla v(t,\omega)\rk|_{L^{\frac {\gamma+1}{\gamma}}} |w|_{L^{\gamma+1}}
\le \ep_1 \lk|\eta(t,\omega)\rk|_{L^{\gamma+1}}^{\gamma+1} + C\lk| \nabla v(t,\omega)\rk|^\frac {\gamma+1}{\gamma-1}_{L^{\frac {\gamma+1}{\gamma}}} +\ep_2|w|_{L^{\gamma+1}}^{\gamma+1}
.
\EEQSZ
Next, we will show that for fixed $t\in[0,T]$ and fixed $\omega\in\Omega$, the operator
$\mathcal A_{v}:V \rightarrow  V^{\ast}$ is indeed a bounded operator.
In fact, using \eqref{esti vv.sta} we obtain,
{\renewcommand{\|}{|}
\DEQSZ\label{eq:A-bounded}
\lqq{ \left|\mathbin{_{V^{\ast}}\langle \mathcal A_{v}(u),w\rangle_{V}}\right|
=\left|-\int_{\CO}\left[r_u u^{[\gamma]}w-\chi (-\Delta)^{-1}\di (\eta(t,\omega)\nabla v(t,\omega))w\right]\,dx\right|
}
&&
\\
\nonumber & \le&
 r_u\|u\|_{L^{\gamma+1}}^{\gamma} \|w\|_{L^{\gamma+1}}
 +\chi \|(-\Delta)^{-1} \di (\eta(t,\omega)\nabla v(t,\omega))\|_{L^{\frac{\gamma+1}{\gamma}}}\|w\|_{L^{\gamma+1}}
\\
\nonumber & \le&
 r_u\|u\|_{L^{\gamma+1}}^{\gamma}
\|w\|_{L^{\gamma+1}}
+\chi \| \eta(t,\omega)\|_{L^{\gamma+1}} \|\nabla v(t,\omega)\|_{L^{\frac {\gamma+1}\gamma}}\|w\|_{L^{\gamma+1}}.
\EEQSZ
}
To show the existence of solution $u$ to \eqref{sysu}, we will verify Hypothesis (H1), (H2$^{\prime}$), (H3), and  (H4$^{\prime}$) of \cite[Theorem 5.1.3]{weiroeckner}.
In this way, there exists a pair of solutions $(u,v)$ solving \eqref{sysu}-\eqref{sysv}.

\medskip

\paragraph{Verification of (H1) (Hemicontinuity).}
 Let $\lambda\in\RR$. We need to show that
$$
\lambda\mapsto {_{V^{\ast}}\langle \mathcal A_v(u_{1}+\lambda u_{2}),w\rangle_{V}}
$$
 is continuous on $\RR$, for any $u_{1},u_{2},w\in V$, $t\in[0,T]$,
$\omega\in\Omega$.
As the map $\lambda \mapsto (u_{1}+\lambda u_{2})^{[\gamma]}$ is continuous, by the Dominated Convergence Theorem $
\lambda\mapsto \int_{\CO}(u_{1}+\lambda u_{2})^{[\gamma]}w dx $
is continuous. Hence, $
\lambda\mapsto {_{V^{\ast}}\langle \mathcal A_v(u_{1}+\lambda u_{2}),w\rangle_{V}}$ is continuous.

\medskip

\paragraph{Verification of (H2$^{\prime}$) (Local monotonicity).}
Let $u,w\in L^{\gamma+1}(\CO)$, $t\in[0,T]$, $\omega\in\Omega$. Then, we get
\begin{align}
&  \mathbin{_{V^{\ast}}\langle \mathcal A_{v}(u)-\mathcal A_{v}(w),u-w\rangle_{V}}+|\sigma_1(u)-\sigma_1(w)|^2_{\CL_{2}(H_1,\CH)}
\notag\\
&\le  -\int_{\CO}\left[r_u(u^{[\gamma]}-w^{[\gamma]})(u-w)\right]\,dx+C|u-w|_{\CH}^{2}
\notag\\
&\le  C r_u |u-w|_{L^{\gamma+1}}^2
\Big(|u|_{L^{\gamma+1}}^{\gamma-1}
+ |w|_{L^{\gamma+1}}^{\gamma-1}
\Big)+C|u-w|_{\CH}^{2}
\notag\\
&\le  \Big[\Big(
C r_u |u|_{L^{\gamma+1}}^{\gamma-1}
+ |w|_{L^{\gamma+1}}^{\gamma-1}
\Big)+C\Big]
|u-w|_{\CH}^{2}
.
\end{align}
Hence,  (H2$^{\prime}$) of \cite[Theorem 5.1.3]{weiroeckner} holds.

\medskip

\paragraph{Verification of (H3) (Coercivity).}
Let $u,w\in L^{\gamma+1}(\CO)$, $t\in[0,T]$, $\omega\in\Omega$. Then, by substituting $\xi$ and taking $\ep_1=\frac 14$ and $\ep_2=\frac {r_u}4$ in \eqref{esti vv.sta} we achieve
\DEQS
\lqq{   \mathbin{_{V^{\ast}}\langle \mathcal A_{v}(u),u \rangle_{V}}+|\sigma_1(u)|^2_{\CL_{2}(H_1,\CH)}}
\\
&=&  - r_u \int_{\CO} u^{[\gamma]}u
-\chi \int_{\CO}\lk( (-\Delta)^{-1} \xi\rk)(x)\, u(x)\,dx+|\sigma_1(u)|^2_{\CL_{2}(H_1,\CH)}
\notag
\\
& \leq& -\frac 34r_u |u|_{L^{\gamma+1}}^{\gamma+1}
+C \chi \lk| \nabla v(t,\omega)\rk|^\frac {\gamma+1}{\gamma-1}_{L^{\frac {\gamma+1}{\gamma}}}
+
 \frac 14 \lk|\eta(t,\omega)\rk|_{L^{\gamma+1}}^{\gamma+1} +C |u|_{\CH}^{2}.
\EEQS
Owing to Remark \ref{remark11} and the assumptions on $\eta$ it follows that a non-negative adapted process
$$
f(t):= C \chi \lk| \nabla v(t,\omega)\rk|^\frac {\gamma+1}{\gamma-1}_{L^{\frac {\gamma+1}{\gamma}}}
+
 \ep_1 \lk|\eta(t,\omega)\rk|_{L^{\gamma+1}}^{\gamma+1}
$$
belongs to  $\CM^1_\MA(0,T;\RR)$. This proves (H3) of \cite[Theorem 5.1.3]{weiroeckner}.

\medskip

\paragraph{Verification of (H4$^{\prime}$) (Growth).}
 Let $u\in L^{\gamma+1}(\CO)$, $t\in[0,T]$, $\omega\in\Omega$.
Due to assumption $\eta\in \CM^{\gamma+1}_\MA(0,T;L^{\gamma+1})$ and Remark \ref{remark11}, a non-negative adapted process $f$ with
$$f(t):=\Big(\ep_1 \lk|\eta(t,\omega)\rk|_{L^{\gamma+1}}^{\gamma+1} + C\lk| \nabla v(t,\omega)\rk|^\frac {\gamma+1}{\gamma-1}_{L^{\frac {\gamma+1}{\gamma}}}\Big)\in \CM^{1}_\MA(0,T;\RR).$$
Again using \eqref{eq:A-bounded}  we have
\DEQS
 \left|\mathcal A_{v}(u)\right|_{V^\ast}^{\frac {\gamma+1}\gamma}
 & \le&
 r_u|u|_{L^{\gamma+1}}^{\gamma+1}
+\chi\lk(  | \eta(t,\omega)|_{L^{\gamma+1}} |\nabla v(t,\omega)|_{L^{\frac {\gamma+1}\gamma}}\rk)^\frac {\gamma+1}\gamma.
\EEQS
By the Young inequality for $p=\gamma$ and $p'=\frac {\gamma}{\gamma-1}$, we get
\DEQS
 \left|\mathcal A_{v}(u)\right|_{V^\ast}^{\frac {\gamma+1}\gamma}
 & \le&
 r_u|u|_{L^{\gamma+1}}^{\gamma+1}
+\frac 14  | \eta(t,\omega)|_{L^{\gamma+1}}^{\gamma+1}+\chi C\, |\nabla v(t,\omega)|_{L^{\frac {\gamma+1}\gamma}}^\frac {\gamma+1}{\gamma-1}.
\EEQS
Then,
(H4$^{\prime}$) of \cite[Theorem 5.1.3]{weiroeckner} holds (with $\alpha=\gamma+1$,
$\beta=0$).
%


\medskip

In this way we have shown that the Hypothesis (H1), (H2$^{\prime}$), (H3), and  (H4$^{\prime}$) of \cite[Theorem 5.1.3]{weiroeckner} are satisfied. Therefore, by an application of \cite[Theorem 5.1.3]{weiroeckner}, the existence of $u$ is guaranteed
satisfying
$$
\EE\Big[\sup_{0\le s\le T}|u(t)|_{H^{-1}_2}^2\Big]+\EE\Big[\int_0^ T |u(t)|_{L^{\gamma+1}}^{\gamma+1}\, dt\Big]<\infty.
$$

\end{proof}

From Claim \ref{existencev} we have shown that, if $\gamma>1$, for given $\eta$, there exists an unique solution $v$ to \eqref{sysv} such that
\DEQSZ \label{exholds}
\EE\Big[\int_0^T  |\nabla v(s)|_{L^\frac {\gamma+1}\gamma}^ \frac {\gamma+1}{\gamma-1}\Big]<\infty,
\EEQSZ
 and by Claim \ref{existence} we have
shown that if $v$ satisfies \eqref{exholds}, then
there exists a unique solution $u$ to \eqref{sysu}. Hence, a unique pair $(u,v)$ of solutions  to  \eqref{sysu}--\eqref{sysv}
exists such that
$u\in\BX_{\mathfrak A}$ and
\begin{align}
\EE\Big[
\sup_{0\le s\le T}|u(s)|_{H^{-1}_2}^2
+\int_0^T |u(s)|^{\gamma+1}_{L^{\gamma+1}}\, ds\Big]<\infty,
\qquad
\EE \Big[|v|_{L^2(0,T;H^1_2)}^2\Big]<\infty,
\end{align}
proving \eqref{esti reg v1}, \eqref{esti reg v2} and \eqref{esti uv}. This completes the proof of Proposition \ref{prop existence}.
\end{proof}


\begin{proposition}\label{prop conti}
For $\gamma>3$ there exist numbers  $R_1>0$ and $R_2>0$ such that $\CT$ maps $\SubX$ into itself.
\end{proposition}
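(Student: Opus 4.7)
The plan is to apply It\^o's formula to the two Lyapunov functionals $|u(t)|^2_{H^{-1}_2}$ and $|u(t)|^{\gamma+1}_{L^{\gamma+1}}$ for $u=\CT\eta$, derive a priori bounds that depend only on the data $T,u_0,v_0$ and on the bounds on $\eta$ through $R_1,R_2$, and then choose $R_1,R_2$ large enough that these bounds certify $u\in\SubX$. Positivity of $u$ and $v$ (required by the definition of $\SubX$) follows from the comparison principle applied to the linear equation \eqref{sysv} with non-negative source $\beta\eta\ge 0$, and from the standard positivity preservation for the stochastic porous medium equation in the multiplicative Stratonovich framework, see \cite{roeckner}.

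For the $H^{-1}_2$-bound I would apply It\^o's formula in the Gelfand triple used in Claim \ref{existence}. The porous medium drift contributes the dissipation $\langle u,\Delta u^{[\gamma]}\rangle_{H^{-1}_2}=-|u|^{\gamma+1}_{L^{\gamma+1}}$; the chemotactic cross term is bounded by the Young-type estimate \eqref{esti vv.sta}, with a small fraction of $|u|^{\gamma+1}_{L^{\gamma+1}}$ absorbed into the dissipation and the remainder producing terms in $|\eta|^{\gamma+1}_{L^{\gamma+1}}$ and $|\nabla v|^{(\gamma+1)/(\gamma-1)}_{L^{(\gamma+1)/\gamma}}$; the It\^o correction is handled by \eqref{BGI1int}. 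After taking supremum in $t$, BDG, Gr\"onwall, and using Remark \ref{remark11} (which requires $\gamma\ge 3$) together with Proposition \ref{prop existence} to bound the $\nabla v$-integral by an expression in $R_1,R_2$, one obtains a bound of the form $C_0(T,u_0,v_0)+P_1(R_1,R_2)$ for the left-hand side of the second defining inequality of $\SubX$. For the $L^{\gamma+1}$-bound I would apply It\^o's formula to $F(u)=|u|^{\gamma+1}_{L^{\gamma+1}}/(\gamma+1)$. The porous medium drift produces the dissipation $-r_u|\nabla u^{[\gamma]}|^2_{L^2}$, while the chemotactic drift integrates by parts to $\chi\int\nabla u^{[\gamma]}\cdot\eta\nabla v\,dx$ and is bounded via Cauchy--Schwarz and H\"older with exponents $(\gamma+1)/2$ and $(\gamma+1)/(\gamma-1)$ as $\epsilon|\nabla u^{[\gamma]}|^2_{L^2}+C_\epsilon|\eta|^2_{L^{\gamma+1}}\,|\nabla v|^2_{L^{2(\gamma+1)/(\gamma-1)}}$. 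The assumption $\gamma>3$ yields $2(\gamma+1)/(\gamma-1)<4$, so that the $\nabla v$-factor is controlled by the $L^4$-regularity of $\nabla v$ from Proposition \ref{prop existence}(b); the It\^o correction is bounded by $C|u|^{\gamma+1}_{L^{\gamma+1}}$ thanks to Assumption \ref{wiener}. BDG and Gr\"onwall then produce an analogous bound $C_0'(T,u_0,v_0)+P_2(R_1,R_2)$ for the left-hand side of the first defining inequality of $\SubX$.

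The main obstacle is closing the argument self-consistently: both bounds take the form $C_0+P_i(R_1,R_2)$ with explicit polynomials $P_i$ coming from the chemotactic cross terms and the Proposition \ref{prop existence} estimates on $v$. One must verify that the exponents appearing in $P_i$ are low enough — and that the prefactors $C_\epsilon$ can be arranged to be small through the Young splittings — so that $C_0(T,u_0,v_0)+P_i(R_1,R_2)\le R_i$ admits a solution for $R_1,R_2$ taken large enough. The restriction $\gamma>3$ is precisely what makes this absorption possible: both in Remark \ref{remark11} (used for the $H^{-1}_2$-estimate) and in the H\"older estimate of $\int\eta^2|\nabla v|^2\,dx$ (used for the $L^{\gamma+1}$-estimate), $\gamma>3$ is exactly the threshold at which the $L^4$-regularity of $\nabla v$ from Proposition \ref{prop existence}(b) suffices to control the cross term without requiring stronger (and possibly unavailable) regularity of $v$.
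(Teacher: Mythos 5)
Your proposal follows essentially the same route as the paper's proof: It\^o's formula applied to $|u|^2_{H^{-1}_2}$ and $|u|^{\gamma+1}_{L^{\gamma+1}}$, the same H\"older--Young splittings of the chemotactic cross terms, control of $\nabla v$ through the estimates of Proposition \ref{prop existence}, and the same identification of $\gamma>3$ as the threshold making $(\gamma+1)/(\gamma-1)\le 2$ and $2(\gamma+1)/(\gamma-1)\le 4$. The self-consistency step you flag as the main obstacle closes exactly as you anticipate: the bound on the $H^{-1}_2$-quantity has the form $\ep R_2+C\,R_2^{(\gamma+1)/(2(\gamma-1))}+C_0(T,u_0,v_0)$ with exponent $(\gamma+1)/(2(\gamma-1))<1$ for $\gamma>3$, so it is dominated by $R_2$ once $R_2$ is large and $\ep$ small, while the $L^{\gamma+1}$-bound depends only on $R_2$, so $R_1$ is then simply chosen large enough afterwards.
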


\begin{proof}
Let $\eta\in \CX_\MA(R_1,R_2)$.
The proposition is shown in three steps. 
 First, in the Step (i), we investigate the norm of
$u$ depending on $v$ and $\eta$. In this way, we get an estimate on $\|u\|_{\mathbb{X}}$ in terms of $R_2$, from which we
get a condition for $R_2$.
Now, to get an estimate on
$\EE\Big[ \sup_{0\le s\le T}|u(s)|_{L^{\gamma+1}}^{\gamma+1}\Big]$, we first establish in Step (ii) an estimate on $v$ in a stronger norm, depending on $R_2$. From this estimate we get in Step (iii) the estimate  $\EE\Big[\sup_{0\le s\le T}|u(s)|_{L^{\gamma+1}}^{\gamma+1}\Big]$ depending on $R_2$, which gives us a condition for $R_1$.
%
%
%
To start, let  $\eta\in \SubX$ and $(u,v)$ be solution to \eqref{sysu}--\eqref{sysv}.
%
%
\del{Observe, first by Example 3.2-(4) \cite{vanNeerven1} we know that the Laplace operator with Neuman boundary condition has a bounded $H^\infty$-calculus on $L^{\gamma+1}(\CO)$. In addition, let us remind that the embedding of $H_2$ in $L^2(\CO)$ is a Hilbert-Schmidt embedding.
Hence, we get by Theorem 4.5-(iii) with $e=v_1-v_2$, $B(e)[\psi]:=e\,\psi$, $\psi\in H_2$, $F(e)=\alpha e$, $e_0=0$ (and incorporating $f(t):=\eta_1(t)-\eta_2(t)$)
\DEQSZ\label{aim1}  \EE \|v_1-v_2\|^{\gamma+1}_{H^2_{\gamma+1}}&\le &
C\, \EE \|\eta_1-\eta_2\|^{\gamma+1}_{L^{\gamma+1}}.
\EEQSZ}
\del{Applying the  It\^o-formula to  the function  $\Phi(v):=|v|_{L^2}^2$, we obtain
\begin{align*}
&{} \frac 12|  v (t)|^2_{L^2}-\frac 12| v_0|^2_{L^2}+r_v \int_0^{t} |\nabla v (s)|_{L^2}^2 \, ds +\alpha \int_0^{t } | v (s)|^2_{L^2}\, ds
=  \beta  \int_0^{t} \la  v (s), \eta(s)\ra \, ds
\notag\\&
+\sum_{k\in\BI}\int_0 ^{t } \la   v (s), v (s)\, \psi^{(\delta_2)}_k\ra d\beta^{(2)}_k(s)
+\frac 12 \sum_{k\in\BI } \int_0 ^{t } | v (s)\,\psi^{(2)}_k|_{L^2}^2\, ds.
\end{align*}
Observe, that there exists a constant $C_m>0$ such that we have for $m=\frac {\gamma+1}{\gamma-1}$
\begin{align*}
&{} \frac 12|  v (t)|^{2m}_{L^2}+r^m_v \Big[\int_0^{t} |\nabla v (s)|_{L^2}^2 \, ds \Big]^m +\alpha^m\Big[ \int_0^{t } | v (s)|^2_{L^2}\, ds\Big]^m
\\
\le &\,\,C_m \Bigg\{ \frac 12| v_0|^{2m}_{L^2}  +\beta^m\Big[   \int_0^{t} \la  v (s), \eta(s)\ra \, ds\Big]^m
\notag\\&
+\Big|\sum_{k\in\BI}\int_0 ^{t } \la   v (s), v (s)\, \psi^{(\delta_2)}_k\ra d\beta^{(2)}_k(s)\Big|^m
+\Big|\frac 12 \sum_{k\in\BI } \int_0 ^{t } | v (s)\,\psi^{(2)}_k|_{L^2}^2\, ds\Big|^m\Bigg\}.
\end{align*}
Taking the supremum over $[0,T]$, then, expectation, the Burkholder-Davis-Gundy inequality, and finally applying the Young inequality, we obtain
\begin{align*}
&\frac 12 \EE\Big[\sup_{0 \leq s \leq T}|  v (s)|^{2m}_{L^2}\Big]
+r_v  \EE\Big[\int_0^{T} |\nabla v (s)|_{L^2}^2 \, ds\Big]^m +\frac{\alpha}{2}  \EE\Big[\int_0^{T} | v (s)|^2_{L^2}\, ds\Big]^m
\notag\\&
\leq C_m\Bigg\{ \frac 12 \EE| v_0|^{2m}_{L^2}+ \frac{ \beta^2}{2 \alpha}  \EE\Big[ \int_0^{T} |\eta(s)|^2 _{L^2}\, ds\Big]^m
+
  \frac{\ep}{4} \,\EE\Big[\sup_{ 0\le s \le T} | v(s)|_{L^2}^2\Big]^m
+ C(\ep) \EE\Big[\int_0^{T } | v(s)|_{L^2}^2 ds\Big]^m\Bigg\}
.
\end{align*}
Setting $\ep=1/C_m$ and applying the Gronwall Lemma, we get
\begin{align*}
&\frac 12 \EE\Big[\sup_{0 \leq s \leq T}| v (s)|^{2m}_{L^2}\Big]
+r_v  \EE\Big[\int_0^{T} |\nabla v (s)|_{L^2}^2 \, ds \Big]^m +\frac{\alpha}{2}  \EE\Big[\int_0^{T} | v (s)|^2_{L^2}\, ds\Big]^m
\notag\\&
\leq (1+C T e^{CT})
\Big[\frac 12 \EE| v_0|^{2m}_{L^2} + \frac{ \beta^2}{2 \alpha}\EE\Big(\int_0^T |\eta(s)|^2_{L^2}ds\Big)^m\Big]
\notag\\
&\le
(1+C T e^{CT})
\Big[\frac 12 \EE| v_0|^{2\frac {\gamma+1}{\gamma-1}}_{L^2} + \frac{ \beta^2}{2 \alpha}
\Big\{ \EE \int_0^T |\eta(s)|_{L^{\gamma+1}}^{\gamma+1} \, ds \,\Big\}^{\frac {2} {\gamma-1}}\Big].
\end{align*}
Since $\eta\in \CX_\mathfrak{A}(R_1,R_2)$ we have shown that there exists a constant $C>0$ such that we have
\begin{align*}\label{estimatenablav}
&\frac 12 \EE[\sup_{0 \leq s \leq T}|  v (s)|^{2m}_{L^2}]
+r_v  \EE[\int_0^{T} |\nabla v (s)|_{L^2}^2 \, ds]^m +\frac{\alpha}{2}  \EE[\int_0^{T} | v (s)|^2_{L^2}\, ds]^m
\notag\\&
\le
(1+C T e^{CT})
\Big[\frac 12 \EE| v_0|^{2m}_{L^2} + \frac{ \beta^2}{2 \alpha}
R_2^{\frac 2 {\gamma-1}}\Big].
\end{align*}}
\begin{steps2}
\item
Applying the It\^o formula to the function $\Phi(u):=|u|_{H^{-1}_2}^2$ we get
\begin{align*}
 |u(t)|_{H^{-1}_2}^2-|u_0|_{H^{-1}_2}^2
&=\int_0^ t \Big[ \la u(s),r_u \Delta u^{[\gamma]}(s)\ra_{H^{-1}_2}
-\chi\,
\la u(s), \di( \eta(s)\nabla v(s)\ra_{H^{-1}_2}\, ds
\notag\\
& \quad
 +\sum_{k\in\BI_1}  \int_0^t \la u(s),\sigma_u u(s)\psi^{(1)}_k \ra_{H^{-1}_2}\, d\beta_k^{(1)}(s)
+\frac 12 \sum_{k\in\BI_1} \int_0^t\sigma_u^2 |u(s)\psi^{(1)}_k|^2_{H^{-1}_2}\, ds.
\end{align*}
Taking supremum over $[0,T]$ and then expectation, it is easy to see that
\begin{align*}
&\frac{ 1}{2} \,\EE\Big[\sup_{0 \leq s \leq T} |u(s)|_{H^{-1}_2}^2\Big]
-\frac 12 \EE\,|u_0|_{H^{-1}_2}^2
\\
&
\le  \EE\Big[ \sup_{0 \leq s \leq t} \sum_{k\in\BI_1} \Big| \int_0^s \la u(r),\sigma_u u(r)\psi^{(1)}_k \ra_{H^{-1}_2}\, d\beta_k^{(1)}(r)\Big|\Big]
 +C\,\dfrac{\sigma_u^2}{2} \,\EE\Big[\int_0^T |u(s)|^2_{H^{-1}_2} ds\Big]
\notag\\
& \quad -\EE \Big[\sup_{0 \leq s \leq t} \int_0^s \int_{\CO} \big[r_u u^{[\gamma]}(r,x)+\chi (-\nabla)^{-1}(\eta(r,x)\nabla v(r,x))u(r,x)\big]dx\,dr \Big]
.
\end{align*}
The Burkholder-Davis-Gundy inequality (see \eqref{BGI2}) and the Young inequality for product term gives for a constant $C>0$
\begin{align}
& \EE\Big[ \sup_{0 \leq s \leq t}\sum_{k\in\BI_1}\Big| \int_0^s \la u(r),\sigma_u u(r)\psi^{(1)}_k \ra_{H^{-1}_2}\, d\beta_k^{(1)}(r)\Big|\Big]
\leq C \EE\Big[ \Big(\int_{0}^{t} \sigma_u^2 | u(s)|^2_{H^{-1}_2} ds \Big)^{\frac 12}\Big]
\notag\\
& \le \frac 14 \EE\Big[\sup_{0 \leq s \leq T} |u(s)|_{H^{-1}_2}^2\Big]
+C\, \sigma_u^2  \EE\Big[ \int_{0}^{T} |u(s)|_{H^{-1}_2}^2\, ds\Big].
\end{align}
To estimate the nonlinear term we apply first the generalized H\"older inequality with $p=\gamma+1$ and
its conjugate $p'=\frac {\gamma+1}\gamma$.  Then we apply the Sobolev embedding $L^1(\CO)\hookrightarrow H^{-1}_{\frac {\gamma+1}\gamma}(\CO)$, and, finally again the H\"older inequality. In this way we get
\DEQS
\Big|\int_\CO (-\nabla)^{-1}(\eta(s,x)\nabla v(s,x))\, u(s,x)dx\Big| \le |u(s)|_{L^{\gamma+1}}  | (-\nabla)^{-1}(\eta(s)\nabla v(s))|_{L^{\frac {\gamma+1}\gamma}}
\\
\le  |u(s)|_{L^{\gamma+1}}  | \eta(s)\nabla v(s)|_{L^1}
\le  |u(s)|_{L^{\gamma+1}}  | \eta(s)|_{L^{\gamma+1}}  |\nabla v(s)|_{L^{\frac {\gamma+1}\gamma}} .
\EEQS
Substituting this estimate above we get for $\ep>0$
\begin{align*}
 &\frac 14\EE\Big[ \sup_{0\le s\le t}|u(s)|_{H^{-1}_2}^2\Big] -\frac 12\EE |u_0|_{H^{-1}_2}^2
\le -r_u\EE \Big[\int_0^T |u(s)|_{L^{\gamma+1}}^{\gamma+1}\,ds\Big]
+C\EE \Big[ \int_0^T|u(s)|_{H^{-1}_2}^{2}\, ds \Big]
\notag\\
& \quad
+\chi\,C\,\EE\Big[ \int_0^T \Big\{|\nabla v(s)|_{L^{\frac{\gamma+1}\gamma}}|\eta(s)|_{L^{\gamma+1}}|u(s)|_{L^{\gamma+1}}\Big\}\, ds\Big]
\notag\\
&\le -r_u\EE\Big[\int_0^T|u(s)|_{L^{\gamma+1}}^{\gamma+1}\, ds \Big]
+\ep \EE\Big[ \int_0^ T  |\eta(s)|_{L^{\gamma+1}}^ {{\gamma+1}}\, ds \Big]
 +  C(\ep,r_u,\gamma)\EE\Big[\int_0^ T|\nabla v(s)|_{L^{\frac{\gamma+1}\gamma}}^{\frac {\gamma+1}{\gamma-1}}\, ds\Big]
\notag\\
& \quad
 +\frac {r_u}{2}
\EE\Big[ \int_0^ T |u(s)|_{L^{\gamma+1}}^{\gamma+1}\, ds\Big]
+C\EE\Big[\int_0^ T |u(s)|_{H^{-1}_2}^{2}\, ds\Big]
.
\end{align*}
Rearranging gives
\DEQS
\lqq{\frac 14 \EE\Big[ \sup_{0\le s\le t}|u(s)|_{H^{-1}_2}^2\Big] -\frac 12 \EE |u_0|_{H^{-1}_2}^2
+\frac {r_u}{2}\EE \Big[
\int_0^ T |u(s)|_{L^{\gamma+1}}^{\gamma+1}\, ds\Big]} &&
\\
&\le& \ep \EE\Big[ \int_0^ T  |\eta(s)|_{L^{\gamma+1}}^ {{\gamma+1}}\, ds\Big] +  C(\ep,r_u,\gamma)
\EE \Big[\int_0^ T|\nabla v(s)|_{L^{\frac{\gamma+1}\gamma}}^{(\gamma+1)/(\gamma-1)}\, ds\Big]
 +C\EE\Big[\int_0^ T |u(s)|_{H^{-1}_2}^2\, ds\Big]
.
\EEQS
To estimate the second term on the right hand side, we take into account that
for $\gamma>3$ we know ${(\gamma+1)/(\gamma-1)}\le 2$.
In addition from  Claim \ref{existencev}-(a) we know that there exists a constant $C>0$ such that
\DEQSZ\label{ueberleg1}
\EE \|v\|_{L^2(0,T;H^1_2)}^2\le \EE |v_0|_{L^2}^2+C\EE \|\eta\|^2_{L^2(0,T;H^{-1}_2)}.
\EEQSZ
In this way we obtain
\begin{align}\label{ueberleg2}
 \EE \|v\|_{L^\frac {\gamma+1}{\gamma-1}(0,T;H^1_{\frac {\gamma+1}\gamma})} ^\frac {\gamma+1}{\gamma-1}
\le C\lk( \EE \|v\|_{L^{2}(0,T;H_{2}^1)}^{2}\rk)^\frac {\gamma+1}{2(\gamma-1)}
\le  \,C\lk(\EE|v_0|_{L^2} 
^{2}+\EE \|\eta\|^2_{L^{2}(0,T;H^{-1}_2)}\rk)^\frac {\gamma+1}{2(\gamma-1)}.
\end{align}
\del{
$\gamma\ge2$, and therefore $\frac {\gamma+1}{\gamma-1}\le \gamma+1$.
Using Remark \ref{remark11} and Claim \ref{existencev} we obtain
\DEQS
\lqq{  \EE \|v\|_{L^\frac {\gamma+1}{\gamma-1}(0,T;H^1_{\frac {\gamma+1}\gamma})} ^\frac {\gamma+1}{\gamma-1}
\le C\lk( \EE \|v\|_{L^{\gamma+1}(0,T;H_{\gamma+1}^1)}^{\gamma+1}\rk)^\frac {\gamma+1}{\gamma-1} }&&
\\
&\le &  \,C\lk(\EE|v_0|_{H^{\frac {2 \gamma}{\gamma+1}}} 
^{\gamma+1}+\EE \|\eta\|_{L^{\gamma+1}(0,T;L^{\gamma+1})}^{\gamma+1}\rk)^\frac {1}{\gamma-1}
.\EEQS}
Since $\eta\in \CX_\MA(R_1,R_2)$, we can write
\DEQS
 \EE \|v\|_{L^\frac {\gamma+1}{\gamma-1}(0,T;H^1_{\frac {\gamma+1}\gamma})} ^\frac {\gamma+1}{\gamma-1}
\le  C\,
\Big[ \EE| v_0|^{2}_{L^2} +
R_2 \Big]^\frac {\gamma+1}{2(\gamma-1)}
.
\EEQS
Using the Gronwall Lemma we know that for any $\ep>0$ there exist  constants $C_1,C_2>0$ such that
\DEQSZ\label{ueberleg3}
\lqq{\EE\Big[\sup_{0 \leq t \leq T} |u(t)|_{H^{-1}_2}^2\Big]
+\frac {r_u}{4}\EE \Big[\int_0^ T |u(s)|_{L^{\gamma+1}}^{\gamma+1}\, ds\Big]}
&&\notag\\
&\le&
\frac 12 \EE |u_0|_{H^{-1}_2}^2+ \ep R_2+ C_1\,R_2^\frac {\gamma+1}{2(\gamma-1)} +C_2\,\Big( \EE| v_0|^{2}_{L^2}\Big)^\frac {\gamma+1}{2(\gamma-1)}
\EEQSZ
\del{Estimate \eqref{estimatenablav} shows that for any $\ep>0$, there exists a constants $C_1,C_2>0$ such that we have
\DEQS
\lqq{ \EE\Big[\sup_{0 \leq t \leq T} |u(t)|_{H^{-1}_2}^2\Big]
+\frac {r_u}{2}\EE\Big[\int_0^ T |u(s)|_{L^{\gamma+1}}^{\gamma+1}\, ds \Big]}
&&
\\
&\le&
\frac C2 \EE |u_0|_{H^{-1}_2}^2+C\, \ep R_2+ CR_2^{\frac 1{\gamma-1}} +\Big(\EE| v_0|^{2}_{L^2}\Big)^\frac {\gamma+1}{2(\gamma-1)} .
\EEQS}
Taking $\ep=\frac{r_u}{8}$ and $R_2$ so large that
\DEQS
 R_2\ge \frac 8{r_u}\Big\{ C_1 R_2^\frac {\gamma+1}{2(\gamma-1)} + \frac 12 \EE |u_0|_{H^{-1}_2}^2+
C_2\, \Big( \EE| v_0|^{2}_{L^2}\Big)^\frac {\gamma+1}{2(\gamma-1)}\Big\}.
\EEQS
Consequently,
$$
\EE\Big[\sup_{0 \leq t \leq T} |\eta(t)|_{H^{-1}_2}^2 \Big]
\le R_2,
$$
which essentially implies
$$
 \EE \Big[\sup_{0 \leq t \leq T} |u(t)|_{H^{-1}_2}^2\Big]
+\frac {r_u}{4}\EE \Big[\int_0^ T |u(s)|_{L^{\gamma+1}}^{\gamma+1}\, ds\Big] \le R_2.
$$

\medskip

\item
Next, we derive a lower estimate for $R_1$.
Applying the It\^o formula to the function
$\Phi(u):=|u|_{L^{\gamma+1}}^{\gamma+1}$ and using standard calculation we obtain
\DEQS 
\lqq{ d |u(t)|_{L^{\gamma+1}}^{\gamma+1}
}
\\
&= &{(\gamma+1)}\Big(\int_\CO u^{{\gamma}}(s,x) \Delta u^\gamma (t,x) \, dx \Big)dt
- {(\gamma+1)} \Big(\int_\CO u^{\gamma}(t,x) \di(\eta (t,x) \nabla v(t,x))\,dx\Big) dt
\notag\\
&&
+\dfrac{(\gamma+1)\gamma}{2}\sigma^2_u \sum_{k\in\BI}
\int_\CO  u^{{\gamma+1}}(t,x)\psi^{(1)}_k(x)\psi^{(1)}_k(x) \, dx\, dt
\\
&&{}+ ({\gamma+1}) \sigma_u \sum_{k\in\BI} \int_\CO u^{\gamma+1}(t,x) \psi^{(1)}_k(x)\,d\beta_k^{(1)}(t)
\notag\\
&\le &{}-({\gamma+1})\int_\CO \nabla u^{\gamma } (t,x)\nabla u^{\gamma}(t,x) dx\, dt
+ ({\gamma+1})\gamma \int_\CO u^{\gamma-1} (t,x)\nabla u(t,x) \nabla v(t,x) \eta (t,x) dx\, dt
\notag\\
& &{} +(\gamma+1) \gamma \sigma^2_u  \Gamma_{\infty}^{(1)}  |u(t)|_{L^{\gamma+1}}^{\gamma+1}
 + ({\gamma+1}) \sigma_u \sum_{k\in\BI} \int_\CO u^{\gamma+1}(t,x) \psi^{(1)}_k(x)\,d\beta_k^{(1)}(t).
\EEQS 
This implies
\begin{align*}
&d |u(t)|_{L^{\gamma+1}}^{\gamma+1}+({\gamma+1})\gamma^2\Big( \int_{\CO}u^{2\gamma-2}|\nabla u(t,x)|^2 dx\Big) dt
\notag\\
&=({\gamma+1})\gamma\Big( \int_{\CO} u^{\gamma-1}(t,x) \eta(t,x) \nabla u (t,x)\cdot \nabla v(t,x)\, dx \Big) dt
+ C(\Gamma_\infty^{(1)}, \sigma_u, \gamma) |u(t)|_{L^{\gamma+1}}^{\gamma+1}dt\notag
\\
&{} +({\gamma+1}) \sigma_u \sum_{k\in\NN} \int_\CO u^{\gamma+1}(t,x) \psi^{(1)}_k(x)\,d\beta_k^{(1)}(t).
\end{align*}
Integrating over  $[0,T]$ we obtain
\begin{align}\label{esti ulp}
&|u(t)|_{L^{\gamma+1}}^{\gamma+1}-|u_0|_{L^{\gamma+1}}^{\gamma+1} +({\gamma+1})\gamma^2 \int_0^t \int_{\CO}u^{2\gamma-2}|\nabla u(t,x)|^2 dx\,ds
\notag\\
&=({\gamma+1})\gamma\int_0^t \int_{\CO} u^{\gamma-1}(s,x) \eta(s,x) \nabla u(s,x) \cdot \nabla v(s,x)\, dx\, ds
+ C(\Gamma_\infty^{(1)}, \sigma_u, \gamma ) \int_0^t |u(s)|_{L^{\gamma+1}}^{\gamma+1} ds
\notag\\
& \quad
 +({\gamma+1})\sigma_u \sum_{k\in\NN} \int_\CO u^{\gamma+1}(t,x) \psi^{(1)}_k(x)\,d\beta_k^{(1)}(t).
\end{align}
Using the H\"older inequality with $p=\frac{\gamma+1}{2}$ and $p'=\frac{\gamma+1}{\gamma-1}$,
we can write
\begin{align}\label{esti nonlin}
& ({\gamma+1})\gamma\int_{\CO} u^{\gamma-1}(s,x) \eta(s,x) \nabla u(s,x) \cdot \nabla v(s,x)\, dx\, ds
\notag\\
&\leq \dfrac{({\gamma+1})\gamma^2}{2}|\nabla u^{\gamma}(s)|_{L^2}^2  +C_\gamma  \int_{\CO} \eta^2(s,x) |\nabla v(s,x)|^2 dx
 \notag\\
 & \leq \dfrac{({\gamma+1})\gamma^2}{2} | u^{\gamma-1}(s) \nabla u(s)|_{L^2}^2 +C_\gamma | \eta^{\gamma+1}(s)|_{L^\gamma+1}^{{2}}\,
|\nabla v(s)|_{L^{\frac{(\gamma+1)}{\gamma-1}}}^{2}
 \notag\\
  & \leq \dfrac{({\gamma+1})\gamma^2}{2}\int_{\CO} u^{2(\gamma-1)} (s)|\nabla u(s)|^2 dx +\eps_1 |\eta(s)|_{L^{\gamma+1}}^{\gamma+1}
  +C(\eps_1,\gamma) |\nabla v(s)|_{L^{\frac{2(\gamma+1)}{\gamma-1}}}^{\frac{2(\gamma+1)}{\gamma-1}}.
\end{align}
Using \eqref{esti nonlin}, we obtain from \eqref{esti ulp} that
\begin{align}\label{esti ulp 1}
&|u(t)|_{L^{\gamma+1}}^{\gamma+1}-|u_0|_{L^{\gamma+1}}^{\gamma+1} +({\gamma+1})\gamma^2 \int_0^t \int_{\CO}u^{2\gamma-2}|\nabla u(t,x)|^2 dx\,ds
\notag\\\nonumber
&\le ({\gamma+1})\gamma\int_0^t \int_{\CO} u^{\gamma-1}(s,x) \eta(s,x) \nabla u(s,x) \cdot \nabla v(s,x)\, dx\, ds
\\\nonumber
&\quad +C(\eps_1,\gamma) \int_0^t C(\eps_1,\gamma) |\nabla v(s)|_{L^{\frac{2(\gamma+1)}{\gamma-1}}}^{\frac{2(\gamma+1)}{\gamma-1}}
+\eps_1 \int_0^t \int_{\CO} |\eta(s,x)|^{\gamma+1}dx \,ds
\notag\\
& \quad
+C_\gamma \int_0^t |u(s)|_{L^{\gamma+1}}^{\gamma+1} ds
 +({\gamma+1})\sigma_u \sum_{k\in\NN} \int_\CO u^{\gamma+1}(t,x) \psi^{(1)}_k(x)\,d\beta_k^{(1)}(t).
\end{align}
Note, that
\DEQS
\int_0^ T |\nabla v(s)|_{L^{\frac{2(\gamma+1)}{\gamma-1}}}^{\frac{2(\gamma+1)}{\gamma-1}}\, ds 
& \le & C\lk( \int_0^ T|v(s)|_{H^1_{4}}^{4}\,ds\rk)^\frac{\gamma+1}{2( \gamma-1)}.
\EEQS
Claim \ref{existencev} yields 
\DEQSZ\label{zww}
\EE \Big[\int_0^ T |\nabla v(s)|_{L^{\frac{2(\gamma+1)}{\gamma-1}}}^{\frac{2(\gamma+1)}{\gamma-1}}\, ds \Big]&\le& \lk\{ \EE|v_0|^{4}_{L^4}+
\EE\Big[\int_0^ T|\eta(s)|_{H^{-1}_4}^4\,ds\Big]\rk\}^\frac{\gamma+1}{2( \gamma-1)}.
\EEQSZ
Taking supremum over $[0,T]$ and then expectation, using the Burkholder--Davis--Gundy inequality, i.e.\ \eqref{BGIgamma},  and substituting \eqref{zww} in \eqref{esti ulp 1}, we get
\begin{align}\label{exercise}
&\EE\Big[\sup_{0 \le s \le T}|u(s)|_{L^{\gamma+1}}^{\gamma+1}\Big]-\EE|u_0|_{L^{\gamma+1}}^{\gamma+1}
+ \dfrac{({\gamma+1})\gamma^2}{2} \EE\Big[ \int_0^T \int_{\CO} (u(s,x))^{2(\gamma-1)} |\nabla u(s,x)|^2 dx\,ds\Big]
\notag\\ &
 \leq C(\ep_1,\gamma)\EE\Big[ \int_0^ T |\nabla v(s)|_{L^{\frac{2(\gamma+1)}{\gamma-1}}}^{\frac{2(\gamma+1)}{\gamma-1}}\, ds\Big]
 +\eps_1 \EE \Big[\int_0^T |\eta(s)|^{\gamma+1}_{L^{\gamma+1}}ds \Big]
+C\EE \Big[ \int_0^T |u(s)|_{L^{\gamma+1}}^{\gamma+1} ds \Big]
\notag
\\
&\quad+\ep\EE \Big[ \int_0^T |u(s)|_{H^\delta_{\gamma+1}}^{\gamma+1} ds\Big]
\notag \\
& \leq  C(\ep_1,\gamma) \lk\{ \EE|v_0|^{4}_{L^4}+\notag
\EE \Big[\int_0^ T|\eta(s)|_{H^{-1}_4}^4\,ds\Big] \rk\}^\frac{\gamma+1}{2( \gamma-1)}
+ \eps_1 \EE \Big[\int_0^T |\eta(s)|_{L^{\gamma+1}}^{\gamma+1} ds \Big]
\\
&\quad +C\EE \Big[ \int_0^T |u(s)|_{L^{\gamma+1}}^{\gamma+1} ds\Big]+\ep\EE \Big[ \int_0^T |u(s)|_{H^\delta_{\gamma+1}}^{\gamma+1} ds\Big]
\notag\\
& \leq  C(\ep_1,\gamma) \lk( \EE|v_0|^{4}_{L^4}\rk) ^\frac{\gamma+1}{2( \gamma-1)}+
\eps_1 R_2 +C(\eps_1,{\gamma}) R_2 ^\frac{\gamma+1}{2( \gamma-1)}
+ C({\gamma},\eps_1)\EE \Big[ \int_0^T |u(s)|_{L^{\gamma+1}}^{\gamma+1} ds\Big]
\notag \\
&\quad +\ep\EE \Big[ \int_0^T |u(s)|_{H^\delta_{\gamma+1}}^{\gamma+1} ds\Big].\
\end{align}
 Note, that by the Technical Proposition \ref{runst1} we cancel  the term $\ep\EE \Big[ \int_0^T |u(s)|_{H^\delta_{\gamma+1}}^{\gamma+1} ds\Big]$
 by $$\EE\Big[ \int_0^T \int_{\CO} (u(s,x))^{2(\gamma-1)} |\nabla u(s,x)|^2 dx\,ds\Big].$$
Using the Gronwall inequality we get
\begin{align}
&\EE\Big[\sup_{0 \le s \le T}|u(s)|_{L^{\gamma+1}}^{\gamma+1}\Big]
+ \dfrac{({\gamma+1})\gamma^2}{4} \EE\Big[ \int_0^T \int_{\CO} (u(s,x))^{2(\gamma-1)} |\nabla u(s,x)|^2 dx\,ds\Big]
\notag\\ &\leq \Big( \EE|u_0|_{L^{\gamma+1}}^{\gamma+1}+ C(\ep_1,\gamma) \lk( \EE|v_0|^{4}_{L^4}\rk) ^\frac{\gamma+1}{2( \gamma-1)}
+ \eps_1 R_2 +C(\eps_1,{\gamma}) R_2 ^\frac{\gamma+1}{2( \gamma-1)}\Big)(1+CT)e^{CT}
\notag\\
& \leq C \Big( \EE|u_0|_{L^{\gamma+1}}^{\gamma+1}+ C(\ep_1,\gamma) \lk( \EE|v_0|^{4}_{L^4}\rk) ^\frac{\gamma+1}{2( \gamma-1)}
+ \eps_1 R_2 +C(\eps_1,{\gamma}) R_2 ^\frac{\gamma+1}{2( \gamma-1)}\Big).
\end{align}
Choosing
$$\ep_1C\, R_2
+C(\eps_1,{\gamma}) R_2 ^\frac{\gamma+1}{2( \gamma-1)} +C\,\EE |u_0|_{L^{\gamma+1}}^{\gamma+1}+ C(\ep_1,\gamma) \lk( \EE|v_0|^{4}_{L^4}\rk) ^\frac{\gamma+1}{2( \gamma-1)} \le R_1,
$$
we know by the calculations above that
\begin{align}
&\EE\Big[\sup_{0 \le s \le T}|u(s)|_{L^{\gamma+1}}^{\gamma+1}\Big]
+ \dfrac{{(\gamma+1)}\gamma^2}{2} \EE\Big[\sup_{0 \le t \le T} \int_0^t\int_{\CO} (u(s,x))^{2(\gamma-1)} |\nabla u(s,x)|^2 dx\,ds\Big]
\le R_1.
\end{align}

\medskip

Summarising,
 we have shown that there exists $R_1>0$ and $R_2>0$ such that $\CT$ maps $\SubX$ in itself, which finishes the proof of Proposition \ref{prop conti}.
 \end{steps2}
\end{proof}

In the next Proposition we show the continuity of the solution operator.

\begin{proposition}\label{continuity}
There exists some $\delta>0$ and a constant $C>0$ such that for all $\eta_1,\eta_2\in\mathcal{X}_\MA(R_1,R_2)$ we have
\DEQS
\|\CT[\eta_1]-\CT[\eta_2]\|_\BX\le C \|\eta_1-\eta_2\|_{\BX}^\delta.
\EEQS
\end{proposition}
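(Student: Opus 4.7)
The plan is to subtract the two solutions and exploit the monotonicity of the porous medium nonlinearity together with the a priori bounds encoded in $\CX_\MA(R_1,R_2)$. Write $u_i:=\CT[\eta_i]$ and $v_i:=\Mr[\eta_i]$, and set $U:=u_1-u_2$, $V:=v_1-v_2$. Subtracting the two versions of system \eqref{sysu}--\eqref{sysv} gives
\begin{align*}
dU&=r_u\Delta(u_1^{[\gamma]}-u_2^{[\gamma]})\,dt-\chi\,\di\bigl((\eta_1-\eta_2)\nabla v_1+\eta_2\nabla V\bigr)\,dt+\mu U\,dt+\sigma_u U\,dW_1,\\
dV&=\bigl(r_v\Delta V-\alpha V+\beta(\eta_1-\eta_2)\bigr)\,dt+\sigma_v V\,dW_2,
\end{align*}
with $U(0)=V(0)=0$.

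The first step handles $V$. Since the $V$--equation is linear in $V$ with forcing $\beta(\eta_1-\eta_2)$ and vanishing initial data, Claim \ref{existencev}(a) yields
\[
\EE\|V\|_{L^2(0,T;H^1_2)}^2+\EE\|V\|_{C([0,T];L^2)}^2\le C\,\EE\int_0^T|\eta_1-\eta_2|_{H^{-1}_2}^2\,ds\le C\,T\,\|\eta_1-\eta_2\|_\BX^2,
\]
i.e.\ Lipschitz control of $V$ in the $\BX$-distance.

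The second step deals with $U$ by applying the It\^o formula to $|U(t)|_{H^{-1}_2}^2$. Monotonicity gives
\[
\la U,\Delta(u_1^{[\gamma]}-u_2^{[\gamma]})\ra_{H^{-1}_2}=-\la U,u_1^{[\gamma]}-u_2^{[\gamma]}\ra_{L^2}\le 0,
\]
so this term may be dropped. Using $|\la U,\di f\ra_{H^{-1}_2}|=|\la\nabla(-\Delta)^{-1}U,f\ra_{L^2}|\le|U|_{H^{-1}_2}\,|f|_{L^2}$, the cross term reduces to estimating $|(\eta_1-\eta_2)\nabla v_1|_{L^2}$ and $|\eta_2\nabla V|_{L^2}$. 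By the generalised H\"older inequality on the planar domain, together with Gagliardo--Nirenberg interpolation between $H^{-1}_2$ and $L^{\gamma+1}$, one obtains for some $\theta\in(0,1)$,
\[
|\eta_1-\eta_2|_{L^p}\le C\,|\eta_1-\eta_2|_{H^{-1}_2}^{\theta}\,|\eta_1-\eta_2|_{L^{\gamma+1}}^{1-\theta},
\]
and the factor $|\eta_1-\eta_2|_{L^{\gamma+1}}$ together with $|\eta_2|_{L^{\gamma+1}}$ is controlled by $2R_1^{1/(\gamma+1)}$-type quantities from membership in $\CX_\MA(R_1,R_2)$; $\nabla v_1$ and $\nabla V$ are controlled through Proposition \ref{prop existence} and Step 1. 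The noise contribution is treated by Burkholder--Davis--Gundy combined with \eqref{BGI1int}, absorbing the arising $|U|_{H^1_2}^2$ into the dissipation via a small $\ep$. A Gronwall argument then yields
\[
\EE\sup_{0\le t\le T}|U(t)|_{H^{-1}_2}^2\le C\,\|\eta_1-\eta_2\|_\BX^{2\delta}
\]
for $\delta\in(0,1)$ determined by the interpolation exponent, which is precisely the claim.

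The main obstacle is the cross term $\eta_2\nabla V$: because $\eta_2$ is only bounded in $L^{\gamma+1}$ (not small) and $V$ is only a solution of the auxiliary heat-type equation in low regularity, both factors have to be balanced carefully to pick up a positive power of $\|\eta_1-\eta_2\|_\BX$. Since $u\mapsto u^{[\gamma]}$ is non-Lipschitz, monotonicity gives only dissipation rather than a Lipschitz estimate, and this forces the H\"older exponent $\delta$ to be strictly less than one; the precise value of $\delta$ is pinned down by the interpolation inequality applied to $\eta_1-\eta_2$ and by the exponent produced by the stochastic Young inequality.
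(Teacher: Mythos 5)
Your overall architecture matches the paper's: subtract the two systems, control $V=v_1-v_2$ linearly via Claim \ref{existencev}, apply It\^o to $|U|_{H^{-1}_2}^2$, split the cross term into $(\eta_1-\eta_2)\nabla v_1$ and $\eta_2\nabla V$, and extract a H\"older exponent $\delta<1$ from an interpolation. However, there is a genuine gap at the heart of your argument: the interpolation inequality
$|\eta_1-\eta_2|_{L^p}\le C\,|\eta_1-\eta_2|_{H^{-1}_2}^{\theta}\,|\eta_1-\eta_2|_{L^{\gamma+1}}^{1-\theta}$
with $\theta>0$ is false. Any interpolation of $H^{-1}_2$ (smoothness $-1$) with $L^{\gamma+1}$ (smoothness $0$) lands in a space of strictly negative smoothness, which cannot control an $L^p$ norm; concretely, for $f_N(x)=e^{iNx}g(x)$ the right-hand side tends to $0$ as $N\to\infty$ while $|f_N|_{L^p}$ stays fixed. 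Since this is precisely the step that produces the positive power of $\|\eta_1-\eta_2\|_{\BX}$, the argument does not close. The paper circumvents this by interpolating $L^2=[H^{\rho}_2,H^{-1}_2]_{\theta}$ with $\rho>0$, and the crucial point is that the positive-smoothness endpoint $\EE\int_0^T(|\eta_1|_{H^\rho_2}+|\eta_2|_{H^\rho_2})^{2\gamma}\,ds$ is finite not because of the $L^{\gamma+1}$ bound but because of the porous-medium dissipation $\EE\int_0^T\xi^{\gamma}|\nabla\xi|^2_{L^2}\,ds\le R_1$ built into $\CX_\MA(R_1,R_2)$, converted into an $H^{\theta}_{2\gamma}$ bound by Technical Proposition \ref{runst1}. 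Your proposal never invokes that dissipation bound, and without it no positive power $\theta$ on the $H^{-1}_2$ distance is available.

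A second, related issue: you discard the monotone term as merely nonpositive, whereas the paper keeps the quantitative form $(u_1^{[\gamma]}-u_2^{[\gamma]})(u_1-u_2)\ge|u_1-u_2|^{\gamma+1}$, yielding the dissipation $-r_u\EE\int_0^T|u_1-u_2|^{\gamma+1}_{L^{\gamma+1}}\,ds$ on the left-hand side. This is not cosmetic: the paper's treatment of both $I_1$ and $I_2$ pairs the cross terms against $|u_1-u_2|_{L^{\gamma+1}}$ (via $L^1(\CO)\hookrightarrow H^{-1}_{\frac{\gamma+1}{\gamma}}(\CO)$) and then uses Young's inequality, producing remainders $\ep\,\EE\int_0^T|u_1-u_2|^{\gamma+1}_{L^{\gamma+1}}\,ds$ that can only be absorbed by that dissipation. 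Your alternative of pairing against $|U|_{H^{-1}_2}$ would require $(\eta_1-\eta_2)\nabla v_1$ and $\eta_2\nabla V$ in $L^2(\CO)$, which demands more integrability of $\eta_i$ and $\nabla v_i$ than the class $\CX_\MA(R_1,R_2)$ and Claim \ref{existencev} provide in two dimensions. To repair the proof you should retain the $L^{\gamma+1}$ dissipation and replace your interpolation by the paper's $[H^\rho_2,H^{-1}_2]_\theta=L^2$ step backed by Technical Proposition \ref{runst1}.
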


\begin{proof}[Proof of Proposition \ref{continuity}]
Let $(u_1,v_1)$ and $(u_2,v_2)$ be solutions of \eqref{sysu} and \eqref{sysv} corresponding to $\eta_1$ and $\eta_2$ respectively, i.e.,
\DEQSZ
d{u}_i(t)&=& \Big( r_u \Delta u_i^\gamma (t) 
-\chi \mbox{div} \big( \eta_i(t)\nabla v_i(t)\big)\Big)\, dt+\sigma_u u_i(t)dW_1(t)\label{sysu12}
\\
d{v}_i(t) &=& \big(r_v \Delta v_i(t)+\beta u_i(t) 
-\alpha v_i(t)\Big)\, dt +\sigma_v v_i(t) dW_2(t)\,\quad t\in [0,T],\phantom{\big|}\,\,i=1,2.\label{sysv12}
\EEQSZ

\medskip
First, let us investigate the difference of $e=v_1-v_2$. Since the system of $v$ is linear, we know that $e$ solves
\DEQS
d{e}(t) &=& \big(r_v \Delta e(t)+\beta e(t) 
-\alpha e(t)\Big)\, dt +\sigma_v e(t) dW_2(t)\,\quad t\in [0,T],\phantom{\big|}\,\,i=1,2.%
\EEQS
Secondly,  by Example 3.2-(4) \cite{vanNeerven1} we know that the Laplace operator with Neumann boundary condition has a bounded $H^\infty$-calculus on $H^{-1}_2(\CO)$.
Hence, we know by Theorem 4.5-(iii) with $B(e)[\psi]:=e\,\psi$, $\psi\in \mathcal H$, $F(e)=\alpha e$, $e_0=0$ (and incorporating $f(t):=\eta_1(t)-\eta_2(t)$)
that there exists a constant $C>0$ such that
\DEQSZ\label{aim3}  \EE \|v_1-v_2\|^{2}_{L^2(0,T;H^1_{2})}&\le &
C\, \EE\Big[ \sup_{0 \le s \le t}\|\eta_1(s)-\eta_2(s)\|^2_{H^{-1}_2}\Big].
\EEQSZ

\medskip

Next, we investigate the difference of $u_1-u_2$.
Using It\^o formula to $|u_1(t)-u_2(t)|^2_{H^{-1}_2}$ and by canonical calculations we obtain
\begin{align}
 &\frac 12|u_1(t)-u_2(t)|_{H^{-1}_2}^2
\le \Big[\int_0^ t \la (u_1(s)-u_2(s)),r_u \Delta (u_1^{\gamma}(s)-u_2^{\gamma}(s))\ra_{H^{-1}_2}ds \Big]
\notag
\\
&\quad-\chi \Big[\int_0^ t   \la u_1(s)-u_2(s)),\di(\eta_1(s)\nabla v_1(s)-\eta_2(s)\nabla v_2(s))\ra_{H^{-1}_2}ds \Big]
\notag
\\
& \quad\notag
+\sum_{k\in\mathbb{I}_1}\int_0^t \la \big(\sigma_u u_1(s)-\sigma_u u_2(s) \big),( u_1(s)-u_2(s))\psi^{(1)}_k\ra_{H_2^{-1}}\, d\beta^{(1)}_k(s)
\\
\notag&
\quad +\frac12\sum_{k\in\mathbb{I}_1}\int_0^t \sigma_u^2 |(u_1(s)- u_2(s))\psi^{(1)}_k|_{H_2^{-1}}^2\,ds
\\
\notag& \leq - \int_0^t \int_{\CO} \Big[ r_u \big(u_1^{\gamma}(s,x)-u_2^{\gamma}(s,x)\big)
\big(u_1(s,x)-u_2(s,x)\big)dx\,ds
\Big]\notag
\\
\notag
& \quad -\chi \Big[
\int_0^t \int_{\CO} (-\nabla)^{-1}
\Big(\eta_1(s,x)\nabla v_1(s,x)-\eta_2(s,x)\nabla v_2(s,x)
\Big) \big(u_1(s,x)-u_2(s,x)\big)dx\,ds
\Big]\notag
\\\notag
& \quad
+\sum_{k\in\mathbb{I}_1}\int_0^t \la \big(\sigma_u u_1(s)-\sigma_u u_2(s) \big),( u_1(s)-u_2(s))\psi^{(1)}_k\ra_{H_2^{-1}}\, d\beta^{(1)}_k(s)
\\&\notag
 \quad+\frac12\sum_{k\in\mathbb{I}_1}\int_0^t \sigma_u^2 |(u_1(s)- u_2(s))\psi^{(1)}_k|_{H_2^{-1}}^2\,ds
\\&:=J_1(t)+J_2(t)+J_3(t)+J_4(t).
\end{align}
Exploiting the fact that $\big(u_1^{\gamma}-u_2^{\gamma}\big)
(u_1-u_2) \geq |u_1-u_2|^{\gamma+1}$, we obtain
\begin{align}\label{esti u12 gam}
&-r_u \int_0^t \int_{\CO} \Big[ \big(u_1^{\gamma}(s,x)-u_2^{\gamma}(s,x)\big)
\big(u_1(s,x)-u_2(s,x)\big)\Big]dx\,ds
\notag\\
& \leq -r_u \int_0^1 \int_{\CO} |u_1(s,x)-u_2(s,x)|^{\gamma+1}dx\,ds
=-r_u \Big[\int_0^t |u_1(s)-u_2(s)|^{\gamma+1}_{L^{\gamma+1}}ds
\big].
\end{align}
Using \eqref{esti u12 gam} and taking supremum over $[0,T]$ and expectation we obtain
\begin{align*}\label{here_tt}
&\EE \Big[\sup_{0 \leq t \leq T}|u_1(t)-u_2(t)|^2_{{H^{-1}_2}}\Big]
 +r_u \EE \Big[\int_0^T |u_1(s)-u_2(s)|^{\gamma+1}_{L^{\gamma+1}}ds
\Big]
\notag\\\notag
& \leq \chi\EE \Big[\sup_{0 \leq t \leq T} \Big|
\int_0^t \int_{\CO} (-\nabla)^{-1}
\Big(\eta_1(s,x)\nabla v_1(s,x)-\eta_2(s,x)\nabla v_2(s,x)
\Big)
\big(u_1(s,x)-u_2(s,x)\big)dx\,ds
\Big|\Big]\notag\\
& \quad
+\EE \Big[\sup_{0 \leq t \leq T} \Big|\sum_{k\in\mathbb{I}}\int_0^t \la \big(\sigma_u u_1(s)-\sigma_u u_2(s) \big),( u_1(s)-u_2(s))\psi^{(1)}_k\ra_{H_2^{-1}}\, d\beta^{(1)}_k(s)\Big|\Big]
\\&
{}+\frac12\sum_{k\in\mathbb{I}}\int_0^t \sigma_u^2 |(u_1(s)- u_2(s))\psi^{(1)}_k|_{H_2^{-1}}^2\,ds
=J_1(t)+J_2(t)+J_3(t).
\end{align*}
Now we consider the term $J_2(t)$. First, we split the term into the following sum
\begin{align}
&\EE \Big[\sup_{0 \leq t \leq T} \Big|
\int_0^t \int_{\CO} (-\nabla)^{-1}
\Big(\eta_1(s,x)\nabla v_1(s,x)-\eta_2(s,x)\nabla v_2(s,x)
\Big)
\big(u_1(s,x)-u_2(s,x)\big)dx\,ds \Big|\Big]\notag\\
& \leq \EE \Big[\sup_{0 \leq t \leq T}
\underbrace{\Big| \int_0^t \int_{\CO} (-\nabla)^{-1}
\big(\eta_1(s,x)-\eta_2(s,x)\big)\nabla v_1(s,x)
\big(u_1(s,x)-u_2(s,x)\big)dx\,ds\Big|}_{=:I_1(t)} \Big]
\notag\\
&\quad + \EE\Big[\sup_{0 \leq t \leq T} \Big|\underbrace{\Big|
\int_0^t \int_{\CO} (-\nabla)^{-1}\Big[ \eta_2(s,x)
\nabla \big(v_1(s,x)-v_2(s,x)\big)\Big] \big(u_1(s,x)-u_2(s,x)\big)dx\,ds\Big|}_{=:I_2(t)} \Big]
\end{align}
Next, we  estimate $\EE \Big[\sup_{0\le t\le T}I_1(t)\Big]$. Using the H\"older and the Young inequality we know that for any $\ep>0$ there exists a constant $C>0$ with
\begin{align*}
\EE \Big[\sup_{0\le t\le T}I_1(t)\Big]
&\leq \EE \Big[
\int_0^T \big|(\eta_1(s)-\eta_2(s))\nabla v_1(s)\big|_{H_{\frac {\gamma+1}\gamma}^{-1}} |u_1(s)-u_2(s)|_{L^{\gamma+1}}ds
\Big]
\\
&\leq C(\ep)\EE \Big[
\int_0^T \big|(\eta_1(s)-\eta_2(s))\nabla v_1(s)\big|_{H_{\frac {\gamma+1}\gamma}^{-1}}^\frac {\gamma+1}\gamma\, ds\Big] +
\ep\EE \Big[
\int_0^T |u_1(s)-u_2(s)|_{L^{\gamma+1}}^{\gamma+1}ds
\Big]
.
\end{align*}
Applying the Sobolev embedding $L^1(\CO)\hookrightarrow H_{\frac {\gamma+1}\gamma}^{-1}(\CO)$ and the H\"older inequality we get
\begin{align*}
\EE \Big[\sup_{0\le t\le T}I_1(t) \Big]
&\leq C(\ep)\EE \Big[
\int_0^T |\eta_1(s)-\eta_2(s)|^\frac {\gamma+1}\gamma_{L^2}|\nabla v_1(s)|^\frac {\gamma+1}\gamma_{L^2}\, ds \Big]+
\ep\EE \Big[
\int_0^T |u_1(s)-u_2(s)|_{L^{\gamma+1}} ^{\gamma+1}ds
\Big].
\end{align*}
 Next, applying 
 complex interpolation we get for some $\rho\in(0,\frac 1\gamma)$
\begin{align}\label{compl interpolation}
 \Big[ H^{\rho}_2(\CO),H^{-1}_2(\CO)  \Big]_{\theta}=L^2(\CO)  
\end{align}
with
 $  \rho (1-\theta)+\theta(-1)=0$ we know
\begin{align}\label{esti compl}
|\eta_1(s)-\eta_2(s)|_{L^2}\le |\eta_1(s)-\eta_2(s)|_{H^{-1}_2} ^{\theta}|\eta_1(s)-\eta_2(s)|_{H^{ \rho}_2} ^{1-\theta}.
\end{align}
This gives for $\theta=\frac {\rho}{1+\rho}<\frac 1 {1+\gamma}$ and $1-\theta>\frac \gamma{1+\gamma}$
\DEQSZ \label{inter esti}
\lefteqn{
\EE \Big[\sup_{0\le t\le T}|I_1(t)|\Big]}&&
\notag\\
&\le &
 C \EE\Big[
\int_0^T |\eta_1(s)-\eta_2(s)|_{H_2^{-1}}^{\frac {\gamma+1}\gamma\theta} |\eta_1(s)-\eta_2(s)|_{H^{\rho}_2}^{\frac {\gamma+1}\gamma(1-\theta)}
 |\nabla v_1(s)|_{L^2}^\frac {\gamma+1}\gamma\, ds
\notag \\
 &&{} +\ep\EE \Big[
\int_0^T |u_1(s)-u_2(s)|_{L^{\gamma+1}}^{\gamma+1}ds
\Big]
\notag
\\
& \leq&
 \lk\{\EE\Big[\sup_{0\le s\le T}|\eta_1(s)-\eta_2(s)|_{H_2^{-1}}^2 \Big]\rk\}^{\frac {(\gamma+1)\theta}{2\gamma}}
\lk\{ \EE \Big[\sup_{0\le s\le T}| \nabla v_1(s)|_{L^2}^{\gamma+1}\Big] \rk\}^{\frac 1\gamma}
\notag\\&&{}
\times \lk\{ \EE \Big[\int_0^T  \lk(|\eta_1(s)|_{H^{\rho}_2}+|\eta_2(s)|_{H^{\rho}_2}\rk)^{2\gamma}
ds\Big]\rk\}^{\frac {(\gamma+1)(1-\theta)}{2\gamma^2}}
+\ep\,\EE \Big[
\int_0^T |u_1(s)-u_2(s)|_{L^{\gamma+1}}^{\gamma+1}ds
\Big]
.
\EEQSZ
Note, choosing $\eps$ small enough so that the first term in the right hand side of \eqref{inter esti} can be cancelled using the second term in the left hand side of \eqref{here_tt}. Claim \ref{existencev} gives
$$\EE\Big[ \sup_{0\le s\le t}| \nabla v_1(s)|_{L^2}^{\gamma+1}\Big]
\le C\lk( \EE|v_0|_{H^{\frac {2\gamma}{\gamma+1}}}^{\gamma+1}+ R_2\rk) .
$$
Owing to Technical Proposition \ref{runst1} one achieve
$$
 \EE\Big[ \int_0^t  \lk(|\eta_1(s)|_{H^{\rho}_2}+|\eta_2(s)|_{H^{\rho}_2}\rk)^{2\gamma}
ds\Big]  \le R_1.
$$
This finally gives
\begin{align}\label{inter esti I1}
\EE \Big[\sup_{0\le t\le T}I_1(t)\Big]
&\leq C\lk( 1+\EE|v_0|_{H^{\frac {2\gamma}{\gamma+1}}}^{\gamma+1}+ R_2\rk)^{\frac 1 \gamma}R_1^{\frac{(\gamma+1)(1-\theta)}{2 \gamma^2}} \lk\{\EE\Big[\sup_{0\le s\le T}|\eta_1(s)-\eta_2(s)|_{H_2^{-1}}^2 \Big]\rk\}^{\frac {(\gamma+1)\theta}{2\gamma}}.
\end{align}
Next, we estimate $\EE \Big[\sup_{0\le t\le T}I_2(t)\Big]$.
Again, using the H\"older and the Young inequality and applying the Sobolev embedding $L^1(\CO)\hookrightarrow H_{\frac {\gamma+1}\gamma}^{-1}(\CO)$,
 we know that for any $\ep>0$ there exists a constant $C>0$ with
\begin{align}\label{esti I2}
\EE \Big[\sup_{0\le t\le T}I_2(t)\Big]&\leq
C(\ep)\EE \Big[
\int_0^T |\eta_2(s)|^\frac {\gamma+1}\gamma_{L^2} |\nabla( v_1(s)-v_2(s))|^\frac {\gamma+1}\gamma_{L^2}\, ds \Big]
+\ep\EE \Big[
\int_0^T |u_1(s)-u_2(s)|_{L^{\gamma+1}} ^{\gamma+1}ds
\Big]
\notag\\
&\le
C(\ep)\EE\Bigg[ \lk( \sup_{0\le s\le T}  |\eta_2(s)|^\frac {\gamma+1}\gamma_{L^2} \rk)
\lk( \int_0^T|\nabla( v_1(s)-v_2(s))|^\frac {\gamma+1}\gamma_{L^2}\, ds\rk)\Bigg]
\notag\\&\quad +
\ep\EE \Big[
\int_0^T |u_1(s)-u_2(s)|_{L^{\gamma+1}} ^{\gamma+1}ds
\Big]
\notag\\
&\le
C(\ep)\lk\{ \EE \Big[ \sup_{0\le s\le T}  |\eta_2(s)|^ {\gamma+1}_{L^2}\Big] \rk\}^\frac 1 \gamma
\lk\{\EE \Big[ \int_0^T|\nabla( v_1(s)-v_2(s))|^\frac {\gamma+1}{\gamma-1}_{L^2}\, ds\Big] \rk\}^\frac {\gamma-1}{\gamma}
\notag\\ &\quad +
\ep \EE \Big[
\int_0^T |u_1(s)-u_2(s)|_{L^{\gamma+1}} ^{\gamma+1}ds
\Big].
\end{align}
We choose $\eps$ small enough so that the first term in the right hand side of \eqref{esti I2} can be cancelled using the second term in the left hand side of \eqref{here_tt}.
The term $ \EE\Big[  \sup_{0\le s\le t}  |\eta_2(s)|^ {\gamma+1}_{L^2} \Big]$ can be estimated by $R_1$. It remains to estimate
$$
\EE  \|\nabla( v_1(s)-v_2(s))\|_{L^\frac {\gamma+1}{\gamma-1}(0,T;{L^2})}^\frac {\gamma+1}{\gamma-1}.
$$
First, note that for $\gamma\ge 3$ we have
$$
\EE  \|\nabla( v_1(s)-v_2(s))\|_{L^\frac {\gamma+1}{\gamma-1}(0,T;{L^2})}^\frac {\gamma+1}{\gamma-1}
\le \lk\{ \EE  \|\nabla( v_1(s)-v_2(s))\|_{L^2(0,T;{L^2})}^2\rk\}^\frac {\gamma+1}{2(\gamma-1)} .
$$
Using estimate \eqref{aim3} this term can be handled.
It remains to handle $J_3(t)$ and $J_4(t)$.
Using the Burkholder-Davis-Gundy inequality we obtain
\begin{align}
\EE\Big[ \sup_{0 \leq s \leq t}|J_3(s)|\Big]&=\EE\Big[ \sup_{0 \leq t \leq T} \Big|\int_0^t \la \big(\sigma_u u_1(s)-\sigma_u u_2(s) \big),(u_1(s)-u_2(s))\psi_k^{(1)}\ra_{H_2^{-1}}d\beta_k^{(1)}\Big|\Big]
\notag\\&
\leq C \Big(\EE\Big[\int_0^ t \sigma^2_u |u_1(s)- u_2(s)|_{H_2^{-1}}^2 \,ds\Big]\Big)^{\frac 12}
\notag\\
& \leq \frac {1}4 \EE\Big[\sup_{0 \leq t \leq T}|u_1(s)- u_2(s)|_{H_2^{-1}}^2\Big]
+ C \EE\Big[\int_0^T |u_1(s)- u_2(s)|_{H_2^{-1}}^2\, ds \Big].
\end{align}
Finally, evaluating the trace we obtain 
\begin{align}
\EE\Big[ \sup_{0\le s\le t}J_4(s)\Big]
\leq \frac 12\EE\Big[ \int_0^T | u_1(s)- u_2(s)|_{H_2^{-1}}^2\,ds\Big].
\end{align}
Collecting altogether and substituting the estimates above into \eqref{here_tt} and using the Gronwall inequality, one can infer that there exists a constant $C(R_1,R_2)>0$ and a number~$\delta>0$ such that
\begin{align}
\lqq{ \EE \Big[\sup_{0 \leq s \leq T} |u_1(t)-u_2(t)|^2_{H^{-1}_2}
\Big]}
\notag\\&
\leq C(R_1,R_2)\Big( \Big\{ \EE\Big[ \sup_{0 \leq s \leq T} |\eta_1(s)-\eta_2(s)|^2_{H^{-1}_2}\Big]\Big\}^\delta + \EE \Big[\sup_{0 \leq s \leq T} |\eta_1(s)-\eta_2(s)|^2_{H^{-1}_2}\Big]\Big).
\end{align}
This completes the proof of Proposition \ref{continuity}.

\del{
{\bf Should be on another place, on page 21 (see here) }Due to the linearity of the convolution operator
$
\eta\mapsto \mathfrak{F}(\eta)$, where
$$\mathfrak{F}(\eta)(t):= \int_0^t e^{(t-s)(r_v \DeltaA -\alpha I)}\eta(s)ds,
$$
it follows that for $\eta_1,\eta_2\in \CM^{\gamma+1}_\MA(0,T;L^{\gamma+1}(\CO))$, we have
\DEQS
\EE \|v_1-v_2\|_{L^{\gamma+1}(0,T;H^2_{\gamma+1})}^{\gamma+1}\le C\EE \|\eta_1-\eta_2\|_{L^{\gamma+1}(0,T;L^{\gamma+1})}^{\gamma+1}.
\EEQS}
\end{proof}

In the next proposition we will show that $\CT$ maps $\SubX$ to a precompact set.

\begin{proposition}\label{CC}
For any initial condition $(u_0,v_0)$ satisfying Assumption \ref{init}, and all $R_1>0$ and $R_2>0$ we know that
\begin{itemize}
\item[(a)] there exists $r=r(T,\gamma)>0$ such that for any $\eta \in \CX_\MA(R_1,R_2)$, we have
\begin{align}
\sup_{0 \leq t \leq T} \EE
\|\CT\eta(t)\|_{L^{\gamma+1}}^{\gamma+1}
\le C \EE \|\eta\|_{L^{\gamma+1}(0,T;L^{\gamma+1})}^{\gamma+1}.
\end{align}
\item[(b)] there exists a number $\delta=\delta(T,\gamma)>0$ and $C=C(\delta,T,\gamma,R_1,R_2)>0$ such that for any $0<t_1<t_2 \leq T$ and $\eta \in \SubX$ we have
\begin{align}
\EE
\|\CT\eta(t_1)-\CT\eta(t_2)\|_{H_2^{-1}}^2
\le C |t_1-t_2|^{\delta}.
\end{align}
\end{itemize}
\end{proposition}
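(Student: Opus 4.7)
The plan is to prove the two items by recycling the energy identities derived in Proposition \ref{prop conti} but extracting pointwise-in-time information. Together the two estimates are a typical pair for an Aubin--Lions-type compactness argument: a uniform-in-$t$ bound in the small space $L^{\gamma+1}(\CO)$ and a H\"older-in-$t$ bound in the large space $H^{-1}_2(\CO)$.

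\textbf{Proof of (a).} I would apply the It\^o formula to $\Phi(u)=|u|_{L^{\gamma+1}}^{\gamma+1}$ as in Step (ii) of Proposition \ref{prop conti}, but take expectation at a \emph{fixed} time $t\in[0,T]$ rather than the supremum. The local martingale vanishes under $\EE$, leaving
\begin{align*}
\EE|u(t)|_{L^{\gamma+1}}^{\gamma+1} &+ ({\gamma+1})\gamma^2\,\EE\!\int_0^t|u^{\gamma-1}(s)\nabla u(s)|_{L^2}^2\,ds \\
&\le \EE|u_0|_{L^{\gamma+1}}^{\gamma+1} + ({\gamma+1})\gamma\,\EE\!\int_0^t\!\!\!\int_\CO u^{\gamma-1}\eta\,\nabla u\cdot\nabla v\,dx\,ds + C_\gamma\,\EE\!\int_0^t|u(s)|_{L^{\gamma+1}}^{\gamma+1}\,ds.
\end{align*}
The cross-diffusion term is absorbed via the same H\"older/Young splitting \eqref{esti nonlin} that produced $\eps_1|\eta|_{L^{\gamma+1}}^{\gamma+1}+C(\eps_1,\gamma)|\nabla v|_{L^{2(\gamma+1)/(\gamma-1)}}^{2(\gamma+1)/(\gamma-1)}$ plus a small multiple of $|u^{\gamma-1}\nabla u|_{L^2}^2$. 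Using Claim \ref{existencev} (item (b)) to control $\|\nabla v\|_{L^{2(\gamma+1)/(\gamma-1)}}$ by $\EE\|\eta\|_{L^{\gamma+1}(0,T;L^{\gamma+1})}^{\gamma+1}$ and closing by Gronwall yields (a).

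\textbf{Proof of (b).} I would write the increment as the sum of the integrated drift and stochastic contributions,
\begin{align*}
u(t_2)-u(t_1) &= r_u\!\int_{t_1}^{t_2}\!\Delta u^{[\gamma]}(s)\,ds -\chi\!\int_{t_1}^{t_2}\!\!\di(\eta(s)\nabla v(s))\,ds\\
&\quad+\mu\!\int_{t_1}^{t_2}\!u(s)\,ds+\sigma_u\!\int_{t_1}^{t_2}\!u(s)\,dW_1(s),
\end{align*}
and estimate each term in $H^{-1}_2(\CO)$. For the degenerate diffusion I use $|\Delta\phi|_{H^{-1}_2}\le C|\phi|_{H^1_2}$ together with the Cauchy--Schwarz inequality in time to get a factor $|t_2-t_1|^{1/2}$ and a control by $\EE\int_0^T|u^{[\gamma]}(s)|_{H^1_2}^2\,ds\lesssim R_1$ from $\eta\in\SubX$. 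For the chemotactic flux, since $d=2$ and $\eta\nabla v\in L^2(0,T;L^q)$ with $q>1$ (by H\"older between $\eta\in L^{\gamma+1}$ and $\nabla v\in L^{2(\gamma+1)/(\gamma-1)}$), I bound $|\di(\eta\nabla v)|_{H^{-1}_2}\le|\eta\nabla v|_{L^2}$ and pull out $|t_2-t_1|^{1/2}$. The drift term $\mu\int u$ is controlled using $|u|_{H^{-1}_2}\le C|u|_{L^2}$, part~(a) and H\"older in time. The stochastic term is handled by the BDG inequality \eqref{BDG} together with \eqref{BGI1int} and the pointwise $L^2$ bound: $\EE|\int_{t_1}^{t_2}\!u\,dW_1|_{H^{-1}_2}^2\le C\,\EE\!\int_{t_1}^{t_2}|u(s)|_{H^{-1}_2}^2\,ds\le C|t_2-t_1|\cdot R_2$.

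\textbf{Main obstacle.} The delicate point is the first term: $\Delta u^{[\gamma]}$ is the genuinely nonlinear part and one must translate the a priori bound on $\int_0^T|\nabla u^\gamma|_{L^2}^2\,ds\le R_1/(\gamma r_u/2)$ supplied by $\eta\in\SubX$ into a time-H\"older estimate in $H^{-1}_2$ of the correct order. One verifies this by observing that $\Delta : H^1_2\to H^{-1}_2$ is bounded, whence $\big|\int_{t_1}^{t_2}\Delta u^{[\gamma]}(s)\,ds\big|_{H^{-1}_2}\le|t_2-t_1|^{1/2}\big(\int_0^T|u^{[\gamma]}(s)|_{H^1_2}^2\,ds\big)^{1/2}$, yielding $\delta=1/2$ after taking expectation. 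The chemotactic term requires a slightly more careful H\"older split (using Claim \ref{existencev}(b) to place $\nabla v$ in $L^2(0,T;L^4)$ and $\eta\in L^{\gamma+1}(0,T;L^{\gamma+1})$) but the argument is of the same nature.
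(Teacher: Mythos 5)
Your proposal is essentially correct but follows a genuinely different route from the paper, most visibly in part (b). For part (a) the paper simply observes that the bound is immediate from the definition of $\SubX$ together with Proposition \ref{prop conti} (which already gives $\EE\sup_{0\le t\le T}|\CT\eta(t)|_{L^{\gamma+1}}^{\gamma+1}\le R_1$, hence the weaker $\sup_t\EE$ bound); your rederivation via the It\^o formula at a fixed time is fine but redundant. For part (b) the paper applies the It\^o formula to $\Phi(u)=|u-u_0|^2_{H^{-1}_2}$, splits the result into the terms $K_1,\dots,K_4$, and extracts the factors $t^{\gamma/(\gamma+1)}$ and $t$ from H\"older in time (note the paper in fact only treats the increment from $t_1=0$). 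You instead integrate the equation between $t_1$ and $t_2$ and estimate each drift and stochastic contribution directly in $H^{-1}_2$; this is more elementary, handles a general pair $t_1<t_2$ without a restarting argument, and makes the source of the H\"older exponent transparent. Two points deserve care in your version: (i) the bound $\EE\int_0^T|u^{[\gamma]}(s)|^2_{H^1_2}\,ds\lesssim R_1$ needs the identity $|\nabla u^\gamma|^2=\gamma^2u^{2(\gamma-1)}|\nabla u|^2$ plus Technical Proposition \ref{runst1} to control the zero-order part $|u^\gamma|_{L^2}^2=|u|_{L^{2\gamma}}^{2\gamma}$, and it is a bound on $u=\CT\eta$, so it relies on Proposition \ref{prop conti}, not merely on $\eta\in\SubX$; (ii) for the stochastic increment, estimate \eqref{BGI1int} carries the extra term $\ep|u|^2_{H^1_2}$ which, unlike in the energy identities, cannot be absorbed by a dissipation term here, so you must invoke the multiplier bound in the form the paper uses for its $K_4$/It\^o-correction term (i.e.\ that $m(u)$ is Hilbert--Schmidt from $H_1$ to $H^{-1}_2$ with norm controlled by $|u|_{H^{-1}_2}$ under Assumption \ref{wiener}). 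With these provisos your argument closes and yields the claimed H\"older-in-time estimate with $\delta$ explicit.
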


\begin{proof}
Part \ref{CC}-(a) is clear, due to the definition of $\SubX$.
Let us start with Part \ref{CC}-(b).
 Applying the It\^o formula to the function
$\Phi(u):=|u-u_0|_{H^{-1}_2}$ and using standard calculations
we obtain
\begin{align}
d |u(t)-u_0|_{H^{-1}_2}^2
&= -(\gamma+1) \int_0^ t\la (u(s)-u_0),u(s)^{[\gamma]}-u_0^{[\gamma]}\ra \, ds
+ (\gamma+1) \int_0^ t\la (u(s)-u_0),u_0^{[\gamma]}\ra \, ds
\notag\\
&\quad + (\gamma+1) \int_0^ t \la \nabla^{-1}(u(s)-u_0),\eta(s)\nabla v(s)\ra \, ds
\notag\\
&\quad +\sum_{k\in\BI_1} \int_0^ t \la \nabla^{-1}(u(s)-u_0),\nabla^{-1} (u(s)\psi_k)\ra d\beta_k^{(1)}(t)+\frac 12 \sum_{k\in\BI_1} \int_0^ t |u(s)\psi_k|_{H^{-1}_2}^2 dt
.
\end{align}
This immediately gives
\DEQS
\lqq{d |u(t)-u_0|_{H^{-1}_2}^2+(\gamma+1) \int_0^ t\la (u(s)-u_0),u(s)^{[\gamma]}-u_0^{[\gamma]}\ra \, ds}
&&
\\
&&{}+\sum_{k\in\BI_1} \int_0^ t \la \nabla^{-1}(u(s)-u_0),\nabla^{-1} (u(s)\psi_k)\ra d\beta_k^{(1)}(t)+\frac 12 \sum_{k\in\BI_1} \int_0^ t |u(s)\psi_k|_{H^{-1}_2}^2 dt
\\
&= &
 (\gamma+1) \int_0^ t\la (u(s)-u_0),u_0^{[\gamma]}\ra \, ds
+ (\gamma+1) \int_0^ t \la \nabla^{-1}(u(s)-u_0),\eta(s)\nabla v(s)\ra \, ds
\\
&&{}+\sum_{k\in\BI_1} \int_0^ t \la \nabla^{-1}(u(s)-u_0),\nabla^{-1} (u(s)\psi_k)\ra d\beta_k^{(1)}(t)+\frac 12 \sum_{k\in\BI_1} \int_0^ t |u(s)\psi_k|_{H^{-1}_2}^2 dt
\\
&:=& K_1(t)+ K_2(t)+K_3(t)+K_4(t).
\EEQS
Applying the H\"older inequality gives for the first term
\DEQS
\EE\Big[ \sup_{0\le s\le t}  K_1(s)\Big] &\le &  (\gamma+1)\,\EE\Big[  \Big\{\int_0^ t |u(s)-u_0|^{\gamma+1}_{L^{\gamma+1}}\, ds\Big\}^{\frac 1{\gamma+1}} t^{\frac \gamma{\gamma+1}}
\, |u_0^{[\gamma]}|_{L^\frac {\gamma+1}\gamma}\Big].
\EEQS
The Cauchy Schwarz inequality implies
\DEQS
\EE\Big[ \sup_{0\le s\le t} K_1(s)\Big] &\le &  (\gamma+1)t^{\frac \gamma{\gamma+1}} \lk\{ \EE\Big[ \int_0^ t |u(s)-u_0|^{\gamma+1}_{L^{\gamma+1}}\, ds\Big] \rk\}^ {\frac 1{\gamma+1}}
\, \lk\{ \EE |u_0|_{L^{\gamma+1}}^{\gamma+1} \rk\}^{\frac \gamma{\gamma+1}}.
\EEQS
Applying the Young inequality, we can cancel the $\EE\Big[ \int_0^ t |u(s)-u_0|^{\gamma+1}_{L^{\gamma+1}}\, ds\Big]$ with the second term on the left hand side.
In order to calculate the next term, we apply integration by parts and the H\"older's inequality. In this way we get
\begin{align*}
 \EE\Big[ \sup_{0\le s\le t} K_2(s)\Big]
 & \le
(\gamma+1) \int_0^ t \la \nabla^{-1}(u(s)-u_0),\eta(s)\nabla v(s)\ra \, ds 
\\
&\le
(\gamma+1) \int_0^ t |u(s)-u_0|_{L^{\gamma+1}}|\eta(s)\nabla v(s)|_{H^{-1}_\frac {\gamma+1}\gamma}\, ds.
\end{align*}
By the Young inequality we know that for all $\ep>0$ there exists a constant $C(\ep)>0$ such that
\DEQS
\lqq{ \EE\Big[ \sup_{0\le s\le t} K_2(s)\Big] }&&
\\
&\le&
\ep \EE \int_0^ t |u(s)-u_0|_{L^{\gamma+1}}^{\gamma+1}\, ds +\EE \int_0^ t |\eta(s)\nabla v(s)|^{\frac {\gamma+1}\gamma}_{H^{-1}_\frac {\gamma+1}\gamma}\, ds.
\EEQS
Using the embedding $L^1(\CO)\hookrightarrow H^{-1}_\frac {\gamma+1}\gamma(\CO)$, and applying the H\"older inequality we get
\begin{align*}
\EE\Big[ \sup_{0\le s\le t} K_2(s)\Big] 
&\le 
\ep \EE\Big[ \int_0^ t |u(s)-u_0|_{L^{\gamma+1}}^{\gamma+1}\, ds \Big]
+t\,\EE\Big[ \sup_{0\le s\le t} |\eta(s)|^{\frac {\gamma+1}\gamma}_{L^{\gamma+1}} \sup_{0\le s\le t} |\nabla v(s)|^{\frac {\gamma+1}\gamma}_{L^{\frac {\gamma+1}\gamma}}\Big].
\end{align*}
Applying the H\"older's inequality again we get
\begin{align*}
 \EE\Big[ \sup_{0\le s\le t} K_2(s)\Big]
&\le
\ep \EE \Big[\int_0^ t |u(s)-u_0|_{L^{\gamma+1}}^{\gamma+1}\, ds \Big]
+t\,\lk\{ \EE\Big[ \sup_{0\le s\le t} |\eta(s)|^{(\frac {\gamma+1}\gamma)^2}_{L^{\gamma+1}}\Big] \rk\}^\frac {\gamma+1}\gamma \lk\{\EE\Big[
\sup_{0\le s\le t} |\nabla v(s)|^{{\gamma+1}}_{L^{\frac {\gamma+1}\gamma}}\Big]\rk\}^\frac 1 \gamma.
\end{align*}
Using Claim \ref{existencev} and
$$
|\nabla v|_{L^{\frac {\gamma+1}\gamma}} \le C| v|_{H^1_{\frac {\gamma+1}\gamma}}\le C\,   | v|_{H^{\frac {2\gamma}{\gamma+1}}_{\frac {\gamma+1}\gamma}},
$$
we obtain
\DEQS
\lqq{ \EE\Big[ \sup_{0\le s\le t} K_2(s) \Big]}&&
\\
&\le&
\ep \EE\Big[ \int_0^ t |u(s)-u_0|_{L^{\gamma+1}}^{\gamma+1}\, ds\Big] +t\,\lk\{ \EE\Big[ \sup_{0\le s\le t} |\eta(s)|^{(\frac {\gamma+1}\gamma)^2}_{L^{\gamma+1}} \Big] \rk\}^\frac {\gamma+1}\gamma \lk\{
\EE\|\eta\|_{L^{\gamma+1}(0,T;L^{\gamma+1})}^{\gamma+1}\rk\}^\frac 1 \gamma.
\EEQS
Observe, that $u^\gamma-w^\gamma\le (u-w)^\gamma$.
Collecting all together we know that  for any $\ep_1,\ep_2>0$ there exist  constants $C_1,C_2>0$ such that
\DEQS
\lqq{\EE\Big[ \sup_{0\le s\le t}|u(s)-u_0|_{H^{-1}_2}^2\Big]+(\gamma+1)\EE \Big[ \int_0^ t|u(s)-u_0|^{\gamma+1}_{L^{\gamma+1}}\, ds\Big]}
&&
\\
&\le & \ep_1 \EE\Big[ \int_0^ t |u(s)-u_0|^{\gamma+1}_{L^{\gamma+1}}\, ds\Big]+C(\gamma,\ep_1)t^{\frac \gamma{\gamma+1}}
\, \lk\{ \EE |u_0|_{L^{\gamma+1}}^{\gamma+1} \rk\}
\\
&&{}+\ep_2 \EE\Big[ \int_0^ t |u(s)-u_0|_{L^{\gamma+1}}^{\gamma+1}\, ds\Big] +t\,\lk\{ \EE \Big[\sup_{0\le s\le t} |\eta(s)|^{(\frac {\gamma+1}\gamma)^2}_{L^{\gamma+1}}\Big] \rk\}^\frac {\gamma+1}\gamma \lk\{
\EE\|\eta\|_{L^{\gamma+1}(0,T;L^{\gamma+1})}^{\gamma+1}\rk\}^\frac 1 \gamma
\\
&\le & \ep_1 \EE \Big[ \int_0^ t |u(s)-u_0|^{\gamma+1}_{L^{\gamma+1}}\, ds\Big]
+C_1t^{\frac \gamma{\gamma+1}}
\, \lk\{ \EE |u_0|_{L^{\gamma+1}}^{\gamma+1} \rk\}
\\
&&{}+\ep_2 \int_0^ t \EE \Big[|u(s)-u_0|_{L^{\gamma+1}}^{\gamma+1}\, ds\Big] +C_2t\, R_1^\frac {\gamma+1}\gamma R_2^\frac 1 \gamma.
\EEQS
Taking $\ep_1$ and $\ep_2$ small enough 
gives the assertion. This finishes the proof of the proposition.

\end{proof}


\begin{thebibliography}{10}
\bibitem{cherny}
A.S. {Cherny}.
\newblock {On the strong and weak solutions of stochastic differential
  equations governing Bessel processes.}
\newblock {\sl {Stochastics Stochastics Rep.}}, 70:213--219, 2000.


\bibitem{noise5}
A.~Kolinichenko,  A. N. Pisarchik, and L.~Ryashko.
Stochastic phenomena in pattern formation for distributed nonlinear systems
Published:13 April 2020 in {\sl Phylosophical Transaction of the Royal Society}, 2020.


\bibitem{noise4}
A.~Kolinichenko, and L.~Ryashko.
Multistability and Stochastic
Phenomena in the Distributed
Brusselator Model.
\textit{ J. Comput. Nonlinear Dynam.} Jan, 15(1): 011007 (7 p.), 2020.



\bibitem{stevens1}
A.~Stevens.
\newblock The derivation of chemotaxis equations as limit dynamics of
  moderately interacting stochastic many-particle systems.
\newblock {\em SIAM J. Appl. Math.}, 61:183--212, 2000.







\bibitem{Birkhoff+Rota}
B.~{Garrett} and R.~{Gian}.
\newblock {\em Carlo Ordinary differential equations.}
Fourth edition. John Wiley and Sons, Inc., New York, 1989.

\bibitem{perthame}
B.~Perthame.
\newblock P{DE} models for chemotactic movements: parabolic, hyperbolic and
  kinetic.
\newblock {\em Appl. Math.}, 49(6):539--564, 2004.



\bibitem{bennet}
C.~{Bennett} and R.~{Sharpley}.
\newblock {\em {Interpolation of operators.}}
\newblock Boston, MA etc.: Academic Press, Inc., 1988.

\bibitem{patlak}
C.-S. {Patlak}.
\newblock {Random walk with persistence and external bias.}
\newblock {\em {Bull. Math. Biophys.}}, 15:311--338, 1953.




\bibitem{horstmann1}
D.~Horstmann.
\newblock From 1970 until present: the {K}eller-{S}egel model in chemotaxis and
  its consequences. {I}.
\newblock {\em Jahresber. Deutsch. Math.-Verein.}, 105:103--165, 2003.

\bibitem{horstmann2}
D.~Horstmann.
\newblock From 1970 until present: the {K}eller-{S}egel model in chemotaxis and
  its consequences. {II}.
\newblock {\em Jahresber. Deutsch. Math.-Verein.}, 106:51--69, 2004.

\bibitem{karig}
D.~Karig, K.~Martini, T.~Lu,  N.~DeLateur,  N.~Goldenfeld, and  R.~Weiss.
Stochastic Turing patterns in a synthetic bacterial population.
\textit{ Proc. Natl. Acad. Sci.} USA., 115(26):6572--6577, 2018.
 

\bibitem{EH+DM+TT_2020}
E.~Hausenblas, D.~Mukherjee, and T.~Tran. \newblock {The one-dimensional stochastic
Keller--Segel model with time-homogeneous spatial Wiener processes.},  \newblock {\em  J. Differential Equations}, 310, 506--554, 2022.

\bibitem{EH+DM+JL_2020}
E.~Hausenblas, D.~Mukherjee, and J.~Lankeit. \newblock {Existence of a local solution to the two dimensional stochastic Keller Segel Model.}, (preprint available) 2022.

\bibitem{maxjan}
E.~Hausenblas and J.~Seidler.
\newblock Stochastic convolutions driven by martingales: maximal inequalities
  and exponential integrability.
\newblock {\em Stoch. Anal. Appl.}, 26(1):98--119, 2008.

\bibitem{KSS}
E.~{Keller} and L.~{Segel}.
\newblock {Initiation of slime mold aggregation viewed as an instability.}
\newblock {\em J Theor Biol.}, 26:399--415, 1970.


\bibitem{Senba_2011}
E.~Messoud,
S.~Takasi.
On the well posedness of a class of PDEs including porous medium and chemotaxis effect.
{\em Adv. Differ. Equ.} 16(9-10):937-954, 2011.

\bibitem{wongzakai}
E.~{Wong} and M.~{Zakai}.
\newblock {On the convergence of ordinary integrals to stochastic integrals.}
\newblock {\em {Ann. Math. Stat.}}, 36:1560--1564, 1965.


\bibitem{pratozab}
G.~{Da Prato} and J.~{Zabczyk}.
\newblock {\em {Stochastic equations in infinite dimensions. 2nd ed.}}
\newblock Cambridge: Cambridge University Press, 2nd ed. edition, 2014.

\bibitem{venni}
G.~{Dore} and A.~{Venni}.
\newblock {On the closedness of the sum of two closed operators.}
\newblock {\em {Math. Z.}}, 196:189--201, 1987.


\bibitem{zab}
G.~Tessitore and J.~Zabczyk.
\newblock Strict positivity for stochastic heat equations.
\newblock {\em Stochastic Process. Appl.}, 77(1):83--98, 1998.


\bibitem{engelbert}
H.~{Engelbert}.
\newblock {On the theorem of T. Yamada and S. Watanabe.}
\newblock {\sl {Stochastics Stochastics Rep.}}, 36:205--216, 1991.

\bibitem{Chunhua_2020}
H.~Jiapeng, and  J.~Chunhua.
Time periodic solution to a coupled chemotaxis-fluid model with porous medium diffusion. (English) Zbl 07214234
{\em Discrete Contin. Dyn. Syst.} 40(9):5415-5439, 2020.


\bibitem{Triebel_1995}
H.~{Triebel}.
\newblock {Interpolation theory, function spaces, differential operators. 2nd
  rev. a. enl. ed.}, 1995.

\bibitem{triebel}
H.~{Triebel}.
\newblock {\em {PDE models for chemotaxis and hydrodynamics in supercritical
  function spaces.}}
\newblock Z\"urich: European Mathematical Society (EMS), 2017.



\bibitem{qiao}
H.~{Qiao}.
{A theorem dual to Yamada-Watanabe theorem for stochastic evolution equations.}
{\em Stoch. Dyn.},{10}:
{367--374}, 2010.

\bibitem{poisson3}
H.~Zhao.
\newblock Yamada-Watanabe theorem for stochastic evolution equation driven by
  poisson random measure.
\newblock {\sl International Scholarly Research Notices}, pages Art. ID 982190,
  7, 2014.

\bibitem{carrillo1}
J. A.~Carrillo, S.~Fagioli, F.~Santambrogio, and M.~Schmidtchen.  
\newblock {Splitting schemes and segregation in reaction cross-diffusion systems.}
\newblock {\em SIAM J. Math. Anal.}, 50(5):5695-–5718, 2018.

\bibitem{carrillo2}
J. A.~Carrillo, D.~Castorina, and B.~Volzone. 
\newblock{Ground states for diffusion dominated free energies with logarithmic interaction.}
\newblock {\em SIAM J. Math. Anal.}, 47(1): 1–25, 2015.

\bibitem{carrillo3}
J. A.~Carrillo, and V.~Calvez. 
\newblock{Volume effects in the Keller-Segel model: energy estimates preventing blow-up.}
\newblock {\em J. Math. Pures Appl.} (9) 86(2): 155--175, 2006.

\bibitem{bergh}
J.~Bergh and J.~L\"ofstr\"om.
\newblock \emph{{I}nterpolation Spaces. {A}n Introduction}, volume 223 of
  \emph{Grundlehren der Mathematischen Wissenschaften}.
\newblock Springer-Verlag, Berlin-New York, 1976.

\bibitem{Chunhua_2019}
J.~Chunhua.
Large time behavior of solutions to a chemotaxis model with porous medium diffusion. (English) Zbl 1421.35027
{\em J. Math. Anal. Appl.} 478(1):195-211, 2019.

\bibitem{duan}
J.~Duan and W.~Wang.
\newblock {\em Effective dynamics of stochastic partial differential
  equations}.
\newblock Elsevier Insights. Elsevier, Amsterdam, 2014.


\bibitem{jacod}
J.~{Jacod}.
\newblock {Weak and strong solutions of stochastic differential equations.}
\newblock {\sl {Stochastics}}, 3:171--191, 1980.

\bibitem{Kelkel+Surulescu_2010}
J.~Kelkel, and C.~Surulescu.
On a stochastic reaction-diffusion system modelling pattern formation on seashells.
\textit{ J. Math. Biol.}, 60: 765--796, 2010.


\bibitem{vanNeerven1}
J.M.A.M. van Neerven, M.C. Veraar, and L. Weis. Maximal $L^p$-regularity for stochastic evolution equations.
 SIAM J. Math. Anal.  44(3): 1372--1414, 2012.
		
\bibitem{vanNeerven2}
 J.M.A.M. van Neerven, M.C. Veraar, and L. Weis. Stochastic maximal $L^p$-regularity.
 Ann. Probab.  40(2): 788--812, 2012.



\bibitem{vannnerven12}
J.M.A.M. van Neerven, M.C. Veraar, and L. Weis.
Maximal $L^P$ regularity for stochastic evolution equations
{\em SIAM Journal of Mathemnatical analysis} 44(3):1372-1414, 2011.


\bibitem{vannnervensurvey}
J.M.A.M. van Neerven. $\gamma$-Radonifying Operators: A Survey. The AMSI–ANU Workshop on Spectral Theory and Harmonic Analysis, 1--61, Centre for Mathematics and its Applications, Mathematical Sciences Institute, The Australian National University, Canberra AUS, 2010. 



\bibitem{vannnerven1}
J.M.A.M. van Neerven, M.C. Veraar, and L. Weis.
Stochastic maximal $L^p$-regularity
{\em Ann. Probab.} 40:788-812, 2012.

\del{
\bibitem{gess2019}
K.~Dareiotis, M.~Gerencs\'{e}r, and B.~Gess.
\newblock Entropy solutions for stochastic porous media equations.
\newblock {\em J. Differential Equations}, 266(6):3732--3763, 2019.}

\bibitem{gess2019a}
K.~Dareiotis, M.~Gerencs\'{e}r, and B.~Gess.
\newblock Ergodicity for Stochastic Porous media equations.
\newblock {\em arXiv:1907.04605v1}, July, 2019.


\bibitem{gess2020a}
K.~Dareiotis, M.~Gerencs\'{e}r, and B.~Gess.
\newblock Porous media equations with multplicative space-time white noise.
\newblock {\em arXiv:2002.12924v1}, 2020.


\bibitem{gess2020}
K. Dareiotis and B. Gess.
\newblock Nonlinear diffusion equations with nonlinear gradient noise.
\newblock {\em Electron. J. Probab.}, 25, 2020.

\bibitem{para}
K.~R. Parthasarathy.
\newblock {\em Probability measures on metric spaces}.
\newblock AMS Chelsea Publishing, Providence, RI, 2005.
\newblock Reprint of the 1967 original.

\bibitem{Eisenbach}
M.~Eisenbach, J.W.~Lengeler, M.~Varon, D.~Gutnick,
R.~Meili, R.A.~Firtel, J.E.~Segall et.al.{\em Chemotaxis}, Imperial College Press, 2004.



\bibitem{Michael_amobae}
M.G. Vicker. {\em The regulation of chemotaxis and chemokinesis in Dictostelium amobae by temporal signals and spatial gradients of cyclic AMP}, Journal of Cell Science 107:659--667, 1994.

\bibitem{Porter2011}
M.L.~Porter, F.J.~Vald\'es‐Parada, and B.D.~Wood.
Multiscale modeling of chemotaxis in homogeneous porous media
{\em Water Resources Research}, 47(6),
June 2011.

\bibitem{martin1}
M.~{Ondrej\'at}.
\newblock {Uniqueness for stochastic evolution equations in Banach spaces.}
\newblock {\sl {Diss. Math.}}, 426:1--63, 2004.


\bibitem{roeckner_uniq}
M.~R\"ockner, R.~Zhu, and X.~Zhu.
Existence and uniqueness of solutions to stochastic functional differential equations in infinite dimensions,
{\em Nonlinear Analysis}, 125:358--397, 2015.

\bibitem{bellomo1}
N.~Bellomo, A.~Bellouquid, Y.~Tao, and M.~Winkler.
\newblock Toward a mathematical theory of {K}eller-{S}egel models of pattern
  formation in biological tissues.
\newblock {\em Math. Models Methods Appl. Sci.}, 25:1663--1763, 2015.

\bibitem{biler2}
P.~{Biler}.
\newblock {Mathematical challenges in the theory of chemotaxis.}
\newblock {\em {Ann. Math. Sil.}}, 32:43--63, 2018.


\bibitem{KOT}
P.~Kotelenez,
{Comparison methods for a class of function valued stochastic partial differential equations.}
{\em Probab. Theory Relat. Fields}, 93:1--19, 1992.

\bibitem{noise1}
P.~Patniak.
\newblock Noise in bacterial chemotaxis: Sources, analysis, and control.
\newblock {\em BioScience}, 62:1030--1038, 2012.

\bibitem{R.Dillon_bacteria}
R.~Dillon, L.~Fauci, and D.~Gaver. {\em A Microscale Model of Bacterial Swimming, Chemotaxis and
Substrate Transport}, \newblock{\em J. theor. Biol.} 177, 325--340, Academic Press Limited, 1995.



\bibitem{R.Lord_ferti}
R.~Lord. {\em Fertilization.} Methuen and Co., London, U.K., and Wiley,
New York, USA., 1956.


\bibitem{stratonovich}
R.~L. {Stratonovich}.
\newblock {Topics in the theory of random noise. Vol. I: General theory of
  random processes. Nonlinear transformations of signals and noise. Vol. II:
  Peaks of random functions dnd the effect of noise on relays. Nonlinear self-
  excited.}
\newblock {New York-London: Gordon and Breach Science Publishers. I: XI, 292 p.
  II: XIV, 329p.}, 1967.


\bibitem{noise2}
R.~He, R.~Zhang, and J.~Yuan.
\newblock Noise-induced increase of sensitivity in bacterial chemotaxis.
\newblock {\em Biophysical Journal}, 111:430–437, 2016.

\bibitem{tappe}
S.~{Tappe}.
\newblock {The Yamada-Watanabe theorem for mild solutions to stochastic partial
  differential equations.}
\newblock {\sl {Electron. Commun. Probab.}}, 18:13, 2013.

\bibitem{hillen1}
T.~Hillen and K.~J. Painter.
\newblock A user's guide to {PDE} models for chemotaxis.
\newblock {\em J. Math. Biol.}, 58, 2009.


\bibitem{runst}
T.~{Runst} and W.~{Sickel}.
\newblock {\em {Sobolev spaces of fractional order, Nemytskij operators and
  nonlinear partial differential equations.}}
\newblock Berlin: de Gruyter, 1996.


\bibitem{tusheng}
T.~Zhang and J.~Zhai.
\newblock 2{D} {S}tochastic {C}hemotaxis-{N}avier-{S}tokes {S}ystem, 
\newblock {\sl J. Math. Pures Appl.}, 138:307--355, 2020.

\bibitem{yamada}
T.~{Yamada} and S.~{Watanabe}.
\newblock {On the uniqueness of solutions of stochastic differential
  equations.}
\newblock {\sl {J. Math. Kyoto Univ.}}, 11:155--167, 1971.

\bibitem{roeckner}
V.~{Barbu}, G.~{Da Prato}, and M.~{R\"ockner}.
\newblock {\em {Stochastic porous media equations.}}, volume 2163.
\newblock Cham: Springer, 2016.



\bibitem{weiroeckner}
W.~Liu and M.~R{\"o}ckner.
\newblock \emph{{Stochastic Partial Differential Equations: An Introduction}}.
\newblock Universitext. Springer, 2015.

\bibitem{Giga+Sohr_1991}
Y.~{Giga} and H.~{Sohr}.
\newblock {Abstract $L\sp p$ estimates for the Cauchy problem with applications
  to the Navier-Stokes equations in exterior domains.}
\newblock {\em {J. Funct. Anal.}}, 102(1):72--94, 1991.


\bibitem{Chun_1988}
 Y.S.~Chun, and S.J.~Parkinson, {\em Bacterial motility: membrane topology of
the Escherichia coli MotB protein.} Science 239, 276--278, 1988.

\bibitem{brzezniak}
Z.~{Brze\'zniak}.
\newblock {On stochastic convolution in Banach spaces and applications.}
\newblock {\em {Stochastics Stochastics Rep.}}, 61:245--295, 1997.

\bibitem{brzezniaGatarek}
Z.~Brze\'{z}niak and D.~G\c{a}tarek.
\newblock Martingale solutions and invariant measures for stochastic evolution
  equations in {B}anach spaces.
\newblock {\em Stochastic Process. Appl.}, 84:187--225, 1999.

\bibitem{reactdiff}
Z.~Brze\'{z}niak, E.~Hausenblas, and P.~Razafimandimby.
\newblock Stochastic reaction-diffusion equations driven by jump processes.
\newblock {\em Potential Anal.}, 49:131--201, 2018.


\bibitem{Szymanska_2009}
 Z.~Szymanska, C.~Morales-Rodrigo, M.~Lachowicz, and M.~Chaplain.
 {\em Mathematical modelling of cancer invasion of tissue: the
role and effect of nonlocal interactions},
Math. Models Methods Appl. Sci. 19, 57--281, 2009.




\end{thebibliography}
\end{document}